\DeclareMathAlphabet{\mathpzc}{OT1}{pzc}{m}{it}
\newcommand{\N}{{\ensuremath{\mathbb{N}}}}
\newtheorem{theorem}{Theorem}[section]
\newtheorem{lemma}[theorem]{Lemma}
\newtheorem{definition}[theorem]{Definition}
\newtheorem{corollary}[theorem]{Corollary}
\newtheorem{proposition}[theorem]{Proposition}
\newtheorem{remark}[theorem]{Remark}
\newtheorem{example}[theorem]{Example}
\def\d{\displaystyle}
\def\pco{\text{{\it p}{\rm -co}}}
\def\co{{\rm co}}
\def\ep{\varepsilon}
\def\m{\mathpzc{m}}
\def\ker{{\rm ker\, }}
\def\dist{{\rm dist}}
\title[]{On $p$-Compact mappings and $p$-approximation}
\author{Silvia Lassalle and Pablo Turco}
\thanks{This project was supported in part by UBACyT X218 and UBACyT X038}
 \address{Departamento de Matem\'{a}tica - Pab I,
 Facultad de Cs. Exactas y Naturales, Universidad de Buenos
Aires, (1428) Buenos Aires, Argentina}
\email{slassall@dm.uba.ar, paturco@dm.uba.ar}
 \keywords{$p$-compact sets, holomorphic mappings, approximation properties} 
\begin{document}
\parindent 10pt
\parskip .2cm

\begin{abstract} The notion of $p$-compact sets arises naturally from Grothendieck's characterization of compact sets as those contained in the convex hull of a norm null sequence. The definition, due to Sinha and Karn (2002), leads to the concepts of $p$-approximation property and $p$-compact operators, which form a ideal with its ideal norm $\kappa_p$. This paper examines the interaction between the $p$-approximation property and certain space of holomorphic functions, the $p$-compact analytic functions.  In order to understand these functions we  define a $p$-compact radius of convergence which allow us to  give a characterization of the functions in the class. We show that 
$p$-compact holomorphic functions behave more like nuclear than compact maps.  We use the $\epsilon$-product  of Schwartz, to  characterize the $p$-approximation property of a Banach space in terms of $p$-compact homogeneous polynomials and  in terms of $p$-compact holomorphic functions with range on the space. Finally, we show that $p$-compact holomorphic functions  fit into the framework of holomorphy types which allows us to inspect the $\kappa_p$-approximation property. Our approach also allows us to solve several questions posed by Aron, Maestre and Rueda in \cite{AMR}.
\end{abstract} 
\maketitle

\section*{Introduction}

In the Theory of Banach spaces (or more precisely, of infinite dimensional
locally convex spaces),  three concepts appear systematically
related since the foundational articles by Grothendieck \cite{Gro} and
Schwartz \cite{Schw}. We are referring to  compact sets, compact operators and
the approximation property. A Banach space $E$ has the approximation property whenever the identity
map can be uniformly approximated by finite rank operators on compact sets. Equivalently, if
$E'\otimes E$, the subspace of finite rank operators, is dense in $\mathcal L_c(E;E)$, the space of
continuous linear operators considered with the uniform convergence on compact sets. The other
classical reformulation states that $E$ has the approximation property if $F'\otimes E$  is dense in $\mathcal K(F;E)$,  the space of compact operators, for all Banach spaces $F$.  It was not until 1972 that Enflo
provided us with the first example of a Banach space without the approximation property \cite{En}.
In the quest to a better understanding of this property and the delicate relationships inherit on
different spaces of functions, important variants of the approximation property have
emerged and were intensively studied. For the main developments on the subject we quote  the
comprehensive survey \cite{Cas} and the references therein. 

Inspired in Grothendieck's result which characterize relatively compact sets as those
contained in the convex hull of a norm null sequence of vectors of the space, Sinha and Karn \cite{SiKa}
introduced the concept of relatively $p$-compact sets. Loosely speaking, these sets are determined
by norm  $p$-summable sequences. Associated to relatively $p$-compact sets we have naturally defined
the notions of  $p$-compact operators and the $p$-approximation property (see definitions
below). Since relatively $p$-compact  sets are, in particular, relatively compact, the
$p$-approximation property can be seen as a way to weaken the approximation property. These three concepts were first studied  by Sinha and Karn \cite{SiKa, SiKa2} and,
more recently, several authors continued the research on this subject  \cite{CK, DOPS, DPS_dens, DPS_adj, GaLaTur}. 

This paper examines the interaction between the $p$-approximation property and the class of
$p$-compact holomorphic functions. The connection between  the approximation property 
and  the space of holomorphic functions is not without precedent. The pioneer article on this topic is due to 
Aron and Schottenloher \cite{AS}, who prove  that a Banach space $E$ has the approximation property if and only if $(\mathcal H(E), \tau_0)$, the space of the entire functions with the compact open topology, has the approximation property.  Since then, many authors studied the approximation property for spaces of holomorphic functions in different contexts,  see for instance \cite{BDR, Cal, DiMu1, DiMu2, Mu2}. Recently, Aron, Maestre and Rueda
\cite{AMR} prove that $E$ has the $p$-approximation property if and only if
$(\mathcal H(E), \tau_{0p})$ has the approximation property, here $\tau_{0p}$ denotes the
topology of the uniform  convergence  on $p$-compact sets.  The relation between the
approximation property and holomorphic mappings was studied in detail in
\cite{AS},  where the class of compact holomorphic functions play a crucial
role. 

The article is organized as follows. In the first section we fix the notation and state some basic results on $p$-compact mappings. In Section~2 we study the behavior of $p$-compact homogeneous polynomials which can be considered as a polynomial Banach ideal with a natural norm denoted by $\kappa_p$. Following \cite{CK} we show that any $p$-compact homogeneous polynomial factors through a quotient of $\ell_1$ and express the $\kappa_p$-norm in terms of an infimum of certain norms of all such possible factorizations. This result slightly improves \cite[Theorem 3.1.]{CK}. We also show that the Aron-Berner extension preserves the class of $p$-compact polynomials with the same norm. Finally we show that there is an isometric relationship between
 the adjoint of $p$-compact polynomials and quasi $p$-nuclear operators, improving the analogous result for operators  \cite{DPS_adj}. 

Section~3 is devoted to the study of $p$-compact holomorphic mappings. Since $p$-compact functions are  compact, we pay special attention to the results obtained by Aron and Schottenloher \cite{AS}, where the authors prove that any holomorphic function is compact  if and only if each polynomial of its Taylor series expansion at $0$ is compact, \cite[Proposition 3.4]{AS}. Then, Aron, Maestre and Rueda \cite[Proposition 3.5]{AMR} show that each component of the Taylor series expansion of a $p$-compact holomorphic mapping has to be $p$-compact. We  define a natural $p$-compact radius of convergence and, in Proposition~\ref{p-comp radius}, we give a characterization of this type of functions. Surprisingly, we found that $p$-compact holomorphic functions behave more like nuclear than compact mappings.  We show this feature with  two examples.  Example~\ref{Pn p-comp f no} shows that  Proposition~\ref{p-comp radius} cannot be improved and also that it is possible to find an entire function whose polynomials at $0$ are $p$-compact but the function fails to be $p$-compact at $0$, which answers by the negative the question posed in \cite[Problem 5.2]{AMR}. In Example~\ref{f p-com en 0 no p-comp} we construct an entire function on $\ell_1$  which is $p$-compact on the open unit ball, but it fails to be $p$-compact at the first element of the canonical basis of $\ell_1$, giving an answer to \cite[Problem 5.1]{AMR}. 

We apply the results of Section~2 and~3 to study the $p$-approximation property in Section~4. We characterize the $p$-approximation property of a Banach space in terms of $p$-compact homogeneous polynomials with range on the space. Our proof requires the notion of the $\epsilon$-product of Schwartz \cite{Schw}. We also show that a Banach space $E$ has the $p$-approximation property if and only if $p$-compact homogeneous polynomials with range on $E$ can be uniformly approximated by finite rank polynomials. Then, we give the analogous result for $p$-compact holomorphic mappings endowed with the Nachbin topology, Proposition~\ref{Nach-p-compact}. 

The final section is dedicated to the $p$-compact holomorphic mappings withing the framework of holomorphy types, concept introduced by Nachbin \cite{NACH, NACH2} .  This allows us to inspect the $\kappa_p$-approximation property introduced, in \cite{DPS_dens}, in the spirit of \cite[Theorem 4.1]{AS}.

For  general background on the approximation property and its different variants we refer the reader to  \cite{Cas, LT}.

\section{Preliminaries}

Throughout this paper $E$ and $F$ will be Banach spaces. We denote  by 
$B_E$ the closed unit ball of $E$,  by $E'$ its topological dual, and by $\ell_p(E)$ the Banach space of the $p$-summable sequences of elements of $E$, endowed with the natural norm.  Also, $c_0(E)$ denotes the space of null sequences of $E$ endowed with the supremum norm. Following \cite{SiKa}, we say that a subset $K\subset E$ is relatively $p$-compact, $1\le p\le \infty$,  if there exists a sequence $(x_n)_n\subset \ell_p(E)$ so that $K$ is
contained in the closure of $\{\sum \alpha_n x_n\colon (\alpha_n)_n\in
B_{\ell_{q}}\}$, where $B_{\ell_{q}}$ denotes the closed unit ball of
$\ell_{q}$, with $\frac 1p + \frac 1{q} =1$. We denote this set by $\pco\{x_n\}$
and its closure by $\overline{\pco}\{x_n\}$.  Compact sets are considered
$\infty$-compact sets, which corresponds to $q=1$. When $p=1$, the $1$-convex
hull is obtained by considering coefficients in $B_{\ell_\infty}$ or, if
necessary, with some extra work by coefficients in  $B_{c_0}$, see
\cite[Remark 3.3]{DPS_adj}.

Since the sequence $(x_n)_n$ in the definition of a relatively $p$-compact set $K$ converges to zero, any
$p$-compact set is compact. Such a sequence is not unique, then we may consider 
$$
\m_p(K; E)=\inf\{\|(x_n)_n\|_p\colon K\subset \pco\{x_n\}\}
$$
which measures the size of $K$ as a $p$-compact set of $E$. For simplicity,
along this work we write $\m_p(K)$ instead of $\m_p(K; E)$. When  $K$ is  relatively 
$p$-compact  and $(x_n)_n\subset \ell_p(E)$  is a sequence  so that $K\subset \pco\{x_n\}$, any $x\in K$ has the form  $x=\sum \alpha_n x_n$ with $(\alpha_n)_n$ some sequence in $B_{\ell_{q}}$. 
By H\"older's inequality, we have that $\|x\|\le \|(x_n)_n\|_{\ell_p(E)}$ and in consequence,
$\|x\| \le \m_p(K)$, for all $x\in K$. We will  use without any further mention the following  equalities:  $\m_p(K) = \m_p(\overline K)=\m_p(\Gamma(K))$, where  $\Gamma(K)$ denotes 
the absolutely convex hull of $K$, a  relatively $p$-compact set.

The space of linear bounded operators from $E$ to $F$ will be denoted by $\mathcal L(E;F)$ and $E'\otimes F$ will denote its subspace of finite rank operators.  As in  \cite{SiKa},  we say that
an operator $T\in \mathcal L(E;F)$ is  $p$-compact,  $1\le p\le \infty$,  if $T(B_E)$ is a relatively $p$-compact set in $F$. The space of $p$-compact operators from $E$ to $F$ will be denoted by $K_p(E;F)$. If $T$ belongs to  $K_p(E;F)$, we define 
$$
\kappa_p(T)=\inf\big\{ \|(y_n)_n\|_p\colon (y_n)_n\in
\ell_p(F) \ \text{and} \ T(B_E)\subset \pco\{y_n\}\big\},
$$ 
where $\kappa_\infty$ coincides with the supremum norm. 
It is easy to see that $\kappa_p$ is a norm on $K_p(E;F)$ and following \cite{PerPi} (see also \cite{DPS_adj}) it is possible to show that the pair $(K_p, \kappa_p)$ is a Banach operator ideal. 

The Banach ideal of $p$-compact operators is associated by duality with the ideal of quasi-$p$-nuclear operators, introduced and studied by Persson and Pietsch \cite{PerPi}.  
A linear operator $T\in \mathcal L(E;F)$  is
said to be quasi $p$-nuclear if 
$j_F T\colon E\to  \ell_\infty(B_{F'})$ is a $p$-nuclear operator, where $j_F$ is the
natural isometric embedding from $F$ into $\ell_\infty(B_{F'})$. It is known that an operator $T$ is quasi $p$-nuclear if and only if there exists a sequence $(x'_n)_n\subset \ell_p(E')$, such that 
$$
\|Tx\| \le \Big(\sum_n |x'_n(x)|^p\Big)^{\frac 1p},
$$
for all $x\in E$ and the quasi $p$-nuclear norm of $T$ is given by 
$$
\nu_p^Q(T)=\inf\{\|(x'_n)_n\|_p\colon \|Tx\| ^p\le \sum_n |x'_n(x)|^p, \quad \forall x\in E\}.
$$ 
The space of quasi $p$-nuclear operators from $E$ to $F$ will be denoted by $\mathcal {QN}_p(E;F)$.
The duality relationship is as follows. Given $T\in \mathcal L(E;F)$, $T$ is  $p$-compact  if and only if its adjoint is quasi $p$-nuclear. Also, $T$ is  is quasi $p$-nuclear if and only if its adjoint is $p$-compact, see \cite[Corollary~3.4 ]{DPS_adj} and \cite[Proposition~3.8]{DPS_adj}.

A mapping $P\colon E\to F$ is an $m$-homogeneous polynomial if there exists a
(unique) symmetric $m$-linear form $\overset\vee
P\colon\underbrace{E\times\cdots\times E}_m\to
F$ such that
$$
P(x)=\overset\vee P(x,\dots,x),
$$
for all $x\in E$. The space of $m$-homogeneous continuous polynomials from $E$
to $F$ will be denoted by $\mathcal{P}(^mE; F)$, which  is a Banach space considered with the supremum
norm
$$
\|P\|=\sup\{\|P(x)\|\colon \, x\in B_E\}.
$$

Every $P\in\mathcal{P}(^mE,F)$ has associated two natural
mappings: the {\it linearization} denoted by 
$L_P\in \mathcal{L}(\bigotimes^m_{\pi_s}E ; F)$, where $\bigotimes^m_{\pi_s}E$ stands for the completion of the symmetric $m$-tensor product endowed with the symmetric projective  norm. Also we have
the polynomial $\overline{P}\in \mathcal P(^m E'', F'')$, which is the
canonical extension of $P$ from $E$ to $E''$ obtained by
weak-star density, known as the Aron-Berner extension of
$P$ \cite{AB}. While $\|L_P\|\le \frac{m^m}{m!}\|P\|$, $\overline P$ is an isometric extension of $P$ \cite{DG}.

A mapping $f\colon E\to F$ is holomorphic at $x_0\in E$ if there exists a sequence of polynomials $P_mf(x_0) \in \mathcal{P}(^mE,F)$ such that  
$$
f(x)=\sum_{m=0}^\infty P_mf(x_0)(x-x_0),
$$ 
uniformly for all $x$ in some neighborhood of $x_0$. We say that  $
\sum_{m=0}^\infty P_mf(x_0),
$
is the Taylor series expansion of $f$ at $x_0$ and that $P_mf(x_0)$ is the $m$-component of the series at $x_0$. A function is said to be holomorphic or entire if it is holomorphic at $x$ for all $x\in E$. The space of entire functions from $E$ to $F$ will be denote by  $\mathcal H(E;F)$.  

We refer the reader to \cite{DIN2, Mu} for general background on polynomials and holomorphic functions.

\section{The $p$-compact polynomials}

We want to understand the behavior of of $p$-compact
holomorphic mappings. The definition, due to Aron, Maestre
and Rueda \cite{AMR} was introduced as a natural extension
of $p$-compact operators to the non linear case. In
\cite{AMR} the authors show that for any $p$-compact
holomorphic function each $m$-homogeneous polynomial of its
Taylor series expansion must be $p$-compact. Motivated by
this fact we devote this section to the study of
polynomials. 

Recall that  $P\in {\mathcal P}(^mE;F)$ is said to be
$p$-compact, $1\leq p \leq \infty$, if there exists a
sequence $(y_n)_{n} \in \ell_p(F)$, $(y_n)_{n} \in c_0$ if $p=\infty$,  such that
$P(B_E)\subset \{\sum_{n=1}^{\infty}\alpha_n y_n \colon
(\alpha_n)_{n} \in B_{\ell_{q}}\}$, where
$\frac 1p + \frac 1{q}=1$. In particular, any $p$-compact
polynomial is compact.
We denote by $\mathcal P_{K_p}(^m E; F)$ the space of
$p$-compact $m$-homogeneous polynomials and by $\mathcal P_{K}(^m E; F)$ the space of compact polynomials.
Following \cite{DPS_dens}, for $P\in \mathcal P_{K_p}(^m E;
F)$ we may define 
$$
\kappa_p(P)=\inf\big\{ \|(y_n)_n\|_p\colon (y_n)_n\in
\ell_p(F) \ and \ P(B_E)\subset \pco\{y_n\}\big\}.
$$ 
In other words,  $\kappa_p(P)=\m_p(P(B_E))$. It is easy
to see that $\kappa_p$ is, in fact, a norm satisfying that
$\|P\|\le \kappa_p(P)$, for any $p$-compact homogeneous
polynomial $P$, and that $(\mathcal P_{K_p}(^m E; F),
\kappa_p)$ is a polynomial Banach ideal. Furthermore, we
will see that any $p$-compact $m$-homogeneous polynomial
factors through a $p$-compact operator and a continuous
$m$-homogeneous polynomial. Also, the $\kappa_p$-norm
satisfies the natural infimum condition.

\begin{lemma}\label{pcomp-lin} 
Let $E$ and $F$ be Banach spaces and let $P \in \mathcal
P(^m E;F)$.  The following statements are  equivalent.

\begin{enumerate}
 \item[(i)] $P$ is $p$-compact.
\item[(ii)] $L_P\colon \bigotimes^m_{\pi_s}E\to F$, the
linearization of $P$, is a $p$-compact operator.
\end{enumerate}
Moreover, we have  $\kappa_p(P)=\kappa_p(L_P)$.
\end{lemma}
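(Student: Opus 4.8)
The plan is to reduce the whole statement to a single geometric comparison between the unit ball of $\bigotimes^m_{\pi_s}E$ and the set $P(B_E)$, and then to invoke the stability properties of $\m_p$ recorded in Section~1.

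First I would recall the standard description of the unit ball of the symmetric projective tensor product: $B_{\bigotimes^m_{\pi_s}E}$ is the closed absolutely convex hull of the set of powers $\{x\otimes\cdots\otimes x\colon x\in B_E\}$. This is immediate from the definition of the symmetric projective norm, whose unit ball (before taking closure) is precisely the absolutely convex hull of these powers, every symmetric tensor being a linear combination of such powers via polarization. Writing $x^{\otimes m}$ for $x\otimes\cdots\otimes x$ and using the defining identity $L_P(x^{\otimes m})=P(x)$, we immediately get $L_P(\{x^{\otimes m}\colon x\in B_E\})=P(B_E)$.

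Next I would establish the double inclusion
$$
P(B_E)\subseteq L_P\big(B_{\bigotimes^m_{\pi_s}E}\big)\subseteq\overline{\Gamma(P(B_E))}.
$$
The left inclusion is clear, since $\{x^{\otimes m}\colon x\in B_E\}\subseteq B_{\bigotimes^m_{\pi_s}E}$. For the right inclusion, because $L_P$ is linear and continuous it carries the closed absolutely convex hull of $\{x^{\otimes m}\colon x\in B_E\}$ into the closed absolutely convex hull of the image of that set, that is, into $\overline{\Gamma(P(B_E))}$; concretely, $L_P(\overline{\Gamma(A)})\subseteq\overline{L_P(\Gamma(A))}=\overline{\Gamma(L_P(A))}$ with $A=\{x^{\otimes m}\colon x\in B_E\}$.

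From this the equivalence is formal. A subset of a relatively $p$-compact set is relatively $p$-compact, and by the stability of $p$-compactness under closed absolutely convex hulls, if $P(B_E)$ is relatively $p$-compact then so is $\overline{\Gamma(P(B_E))}$, hence so is its subset $L_P(B_{\bigotimes^m_{\pi_s}E})$; conversely, if $L_P(B_{\bigotimes^m_{\pi_s}E})$ is relatively $p$-compact then so is its subset $P(B_E)$. For the norm identity I would use the monotonicity of $\m_p$ under inclusion together with $\m_p(K)=\m_p(\overline{\Gamma(K)})$: the first inclusion yields $\kappa_p(P)=\m_p(P(B_E))\le\m_p(L_P(B_{\bigotimes^m_{\pi_s}E}))=\kappa_p(L_P)$, while the second yields $\kappa_p(L_P)\le\m_p(\overline{\Gamma(P(B_E))})=\m_p(P(B_E))=\kappa_p(P)$. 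The only point demanding care, and the main obstacle if any, is the geometric identification of $B_{\bigotimes^m_{\pi_s}E}$ and the continuity argument placing $L_P(B_{\bigotimes^m_{\pi_s}E})$ inside $\overline{\Gamma(P(B_E))}$; once the double inclusion is secured, the remainder is a direct consequence of the properties of $\m_p$ collected in Section~1.
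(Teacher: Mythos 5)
Your proof is correct and takes essentially the same route as the paper: both arguments rest on identifying the unit ball of $\bigotimes^m_{\pi_s}E$ with the (closed) absolutely convex hull of the powers $\{x^{\otimes m}\colon x\in B_E\}$, pushing this set through $L_P$ to compare it with $\Gamma(P(B_E))$, and then invoking the invariance of $\m_p$ under absolutely convex hulls and closures recorded in Section~1. The only cosmetic difference is that the paper works with the \emph{open} unit ball, for which the absolutely convex hull identity is exact and no closure bookkeeping is needed, whereas you work with the closed ball and handle the closure via $\m_p(K)=\m_p(\overline{\Gamma(K)})$.
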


\begin{proof} To show the  equivalence, we appeal to the
familiar diagram, where $\Lambda$ is a norm one homogeneous
polynomial ($\Lambda(x)=x^{m}$) and $P=L_P \Lambda$:  
$$
\xymatrix{
& E \ar[r]^P \ar[d]_{\Lambda}  & F   
                 &    \\
& \bigotimes^m_{\pi_s}E \ar[ur]^{L_P}  & }
$$

Note that  the open unit ball of $\bigotimes^m_{\pi_s}E$ is
the absolutely convex hull $\Gamma \{x^m\colon \|x\|<1\}$.
Then, we have that
$P(B_E)\subset \Gamma(\{L_P(x^m)\colon \|x\|<1\}) =
\Gamma(P(B_E))$. Now, the equality $L_P(\Gamma \{x^m\colon
\|x\|<1\})=\Gamma(P(B_E))$  shows that  any sequence
$(y_n)_n\in \ell_p(F)$ involved in the definition of
$\kappa_p(P)$ is also involved in the definition of
$\kappa_p(L_P)$ and vice versa, which completes the proof.
\end{proof}

Sinha and Karn \cite[Theorem 3.2]{SiKa} show that a
continuous operator is $p$-compact if and only if it
factorizes via a quotient of $\ell_q$ and some sequence
$y=(y_n)_n \in \ell_p(F)$. Their construction is as follows.
Given $y=(y_n)_n \in \ell_p(F)$ there is a canonical
continuous linear operator $\theta_y\colon \ell_q\to F$
associated to $y$ such that on the unit basis of
$\ell_q$ satisfies $\theta_y(e_n)=y_n$. Then, $\theta_y$
is extended by continuity and density. Associated to
$\theta_y$ there is a natural injective operator
$\widetilde{\theta_y}\colon {\ell_{q}}/{\ker \theta_y}
\rightarrow F$ such that
$\widetilde{\theta_y}[(\alpha_n)_n]=\theta_y((\alpha_n)_n)$
with $\|\widetilde
\theta_y\|=\|\theta_y\|\le\|y\|_{\ell_p(F)}$.
Now, if we start with $T$ belonging to $K_p(E;F)$, there
exists $y=(y_n)_n \in \ell_p(F)$ so that $T(B_E)\subset
\pco\{y_n\}$ and, via $\theta_y$, it is possible to define
an operator $T_y\colon E \rightarrow {\ell_{q}}/{\ker
\theta_y}$ by
$T_y(x)=[(\alpha_n)_n]$ where $(\alpha_n)_n\in \ell_q$ is a
sequence satisfying that $T(x)=\sum_{n=1}^{\infty} \alpha_n
y_n$, which exists by the $p$-compactness of $T$. Note that
the operator $T_y$ is well defined, $\|T_y\|\le 1$ and $T$
satisfies the factorization $T=\widetilde{\theta_y}T_y$,
where $\widetilde{\theta_y}$ is $p$-compact and
$\kappa_p(\widetilde \theta_y)=\|y\|_{\ell_p(F)}$. 
It is clear that, if an operator $S$ admits such factorization, then $S$ is $p$-compact. 

Given a sequence $y=(y_n)_n \in \ell_p(F)$, Choi and Kim 
\cite[Theorem 3.1]{CK} factorize the operator $\theta_y$
through $\ell_1$ as the composition of a compact and a
$p$-compact operator, concluding that any $p$-compact
operator $T$ factors through a quotient space of $\ell_1$ as
a composition of a compact mapping with a $p$-compact
operator.  

The behavior of  $p$-compact polynomials is similar to that
described for $p$-compact operators. In the proposition below
 a slight improvement of \cite[Theorem 3.1]{CK} is obtained.
We prove that the corresponding factorizations via a quotient of $\ell_1$
suffice to characterize $\kappa_p(P)$ for $P$ a $p$-compact
polynomial and therefore, for a $p$-compact operator. In
order to do so, we will use the following technical lemma.
Although we believe it should be a known basic result, we
have not found it  mentioned in the literature as we  need it. Thus,  we
also include a proof. 

First, we fix some notation: for $\sigma \subset \N$ a
finite set, we write $S_{\sigma}(r)=\sum_{n \in \sigma}
r^n$. The following fact has a direct proof. Let $\sigma_1,
\sigma_2,\ldots$ be a disjoint sequence of finite sets such
that its union, $\cup_n \sigma_n$, is infinite.  Take  
$0<r<t<1$ and consider the sequence  $\beta= (\beta_n)_n$
defined  by
$\beta_n=\frac{S_{\sigma_n}(r)}{S_{\sigma_n}(t)}$, for all
$n$. Then, $\beta$ belongs to $B_{c_0}$.

\begin{lemma}\label{lemma-tech}
Let $1\le p < \infty$. Given  $(x_n)_n \in \ell_p$ and
$\ep>0$, there exists $\beta=(\beta_n)_n \in B_{c_0}$ such
that $(\frac{x_n}{\beta_n})_n \in \ell_p$ and
$\|(\frac{x_n}{\beta_n})_n\|_p \leq \|(x_n)_n\|_p(1+\ep)$.
\end{lemma}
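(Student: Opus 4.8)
The plan is to pass to $p$-th powers and reduce everything to a single numerical estimate. Writing $a_n=|x_n|^p$, $A=\sum_n a_n=\|(x_n)_n\|_p^p$ and $\delta=(1+\ep)^p-1>0$, it suffices to produce $\beta=(\beta_n)_n\in B_{c_0}$ with $\beta_n>0$ and $\sum_n a_n\beta_n^{-p}\le (1+\delta)A=(1+\ep)^p A$; taking $p$-th roots then gives the assertion. The degenerate cases are immediate: if $(x_n)_n=0$ any fixed element of $B_{c_0}$ works, and if only finitely many $x_n$ are nonzero one sets $\beta_n=1$ on the (finite) support and $\beta_n\to0$ off it, with equality in the estimate. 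So I would assume infinitely many $x_n\neq0$, whence the tails $s_k=\sum_{n>k}a_n$ are strictly positive and decrease to $0$.

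Next I would group the indices into consecutive finite blocks and make $\beta$ constant on each block, the constants being supplied by the preceding fact. Fix $0<r<t<1$ and apply that observation to the disjoint singletons $\sigma_k=\{k\}$: this yields $w_k=S_{\{k\}}(r)/S_{\{k\}}(t)=(r/t)^k$, which lies in $B_{c_0}$ and decreases to $0$. Now choose cut points $0=N_0<N_1<N_2<\cdots$, with $N_1,N_2,\dots$ taken so deep in the tail that $s_{N_{k-1}}\le \delta A\,\rho^{\,k}$ for every $k\ge2$ (a ratio $\rho\in(0,1)$ to be fixed below), set $I_k=\{N_{k-1}+1,\dots,N_k\}$, and define $\beta_n=w_k$ for $n\in I_k$. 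Since each $I_k$ is finite and $w_k\downarrow0$, we have $\beta\in B_{c_0}$, and $X_k:=\sum_{n\in I_k}a_n\le s_{N_{k-1}}\le\delta A\,\rho^{\,k}$ for $k\ge2$, while $X_1\le A$.

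It then remains to run the estimate, and this calibration of parameters is the only real point. One computes $\sum_n a_n\beta_n^{-p}=\sum_k w_k^{-p}X_k=\sum_k (t/r)^{pk}X_k$. The first block contributes at most $(t/r)^p X_1\le (t/r)^p A$, so I would first choose $r,t$ with $(t/r)^p\le 1+\delta/2$ to keep this below $(1+\delta/2)A$; the reciprocal weights $(t/r)^{pk}$ then grow only geometrically. For the remaining blocks, choosing $\rho$ small enough that $\mu:=(t/r)^p\rho<1$ is as small as desired gives $\sum_{k\ge2}(t/r)^{pk}X_k\le \delta A\sum_{k\ge2}\mu^{k}\le \delta A/2$. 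Adding the two contributions yields $\sum_n a_n\beta_n^{-p}\le(1+\delta)A$, and taking $p$-th roots gives $\|(x_n/\beta_n)_n\|_p\le(1+\ep)\|(x_n)_n\|_p$. The crux, and the only thing needing care, is exactly this balancing act: $r/t$ must be pushed close to $1$ so that the reciprocal weights $w_k^{-p}$ do not grow too fast, while the block masses $X_k$ must be forced down geometrically by driving the cuts $N_k$ far into the tail, the compatibility of the two being guaranteed by the single inequality $\rho<(r/t)^p$.

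As an aside, one can avoid blocks altogether by taking $\beta_n=(s_n/A)^{\gamma/p}$ with $s_n=\sum_{k\ge n}a_k$ and $\gamma=1-(1+\ep)^{-p}\in(0,1)$: here $\beta\in B_{c_0}$ since $0<s_n\le A$ decreases to $0$, and using $a_n=s_n-s_{n+1}$ the monotone comparison $\sum_n a_n s_n^{-\gamma}\le\int_0^A u^{-\gamma}\,du=A^{1-\gamma}/(1-\gamma)$ gives $\sum_n a_n\beta_n^{-p}=A^{\gamma}\sum_n a_n s_n^{-\gamma}\le A/(1-\gamma)=(1+\ep)^pA$ directly.
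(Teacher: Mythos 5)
Your proposal is correct, and both of your arguments (the block construction and the integral-comparison aside) establish the lemma; but the route is genuinely different from the paper's. Both proofs share the first reduction: the paper normalizes to a sequence $z_n=x_n^p/\|(x_n)_n\|_p^p$ of $\ell_1$-norm one and recovers $\beta$ as $(\alpha_n^{1/p})_n$, exactly as you pass to $a_n=|x_n|^p$. Where you diverge is the construction of the weights. The paper builds $\beta$ by an inductive partition in which the masses $|x_n|$ are split (via interpolation coefficients $t_{n_j}\in(0,1]$) so that each block has mass \emph{exactly} $S_{\sigma_j}(\frac12)=\sum_{n\in\sigma_j}2^{-n}$, and then sets $\beta_n^{-1}$ equal to the ratios $S_{\sigma_j}(\frac{1+\delta}{2})/S_{\sigma_j}(\frac12)$ (suitably interpolated at the split points); the payoff is that the weighted sum telescopes to the closed form $\sum_j S_{\sigma_j}(\frac{1+\delta}{2})=\frac{1+\delta}{1-\delta}<1+\ep$. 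You instead avoid any exact matching: you only ask that the cut points go far enough into the tail that the block masses decay like $\delta A\rho^k$, take the weight constant equal to $(r/t)^k$ on the $k$-th block, and beat the geometric growth $(t/r)^{pk}$ of the reciprocal weights with the geometric decay $\rho^k$, at the cost of calibrating $r/t$ and $\rho$ (your ordering of choices should be read as: fix $r,t$, then $\rho$, then the cuts $N_k$, which is consistent since nothing earlier depends on anything later). This is more elementary and considerably less delicate than the paper's splitting construction, though it yields only an inequality rather than an exact evaluation. Your closing aside, with $\beta_n=(s_n/A)^{\gamma/p}$ and the comparison $\sum_n(s_n-s_{n+1})s_n^{-\gamma}\le\int_0^A u^{-\gamma}\,du$, is slicker still: it dispenses with blocks entirely and gives the sharp constant $(1+\ep)^p$ in one line, and (after your separate treatment of the finite-support case, which is needed there since $s_n$ must stay positive) it would serve as a complete replacement for the paper's proof.
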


\begin{proof}
It is enough to prove the lemma for $(x_n)_n \in \ell_1$
with $\|(x_n)_n\|_1=1$. Indeed, suppose  the result holds
for this case. Fix $(x_n)_n \in \ell_p$,   a nonzero sequence and consider the
sequence $(z_n)_n$ defined by
$z_n=\frac{x_n^p}{\|(x_n)_n\|_p^p}$, which is a norm one
element of  $\ell_1$. Given $\ep>0$,  take $(\alpha_n)_n \in
B_{c_0}$ such that $\|(\frac {z_n}{\alpha_n})_n\|_1 \leq
1+\ep$ and the conclusion follows with
$\beta=(\alpha_n^{1/p})_n$.

Now, suppose $(x_n)_n \in \ell_1$ and $\|(x_n)_n\|_1=1$, we
also may assume that $x_n \neq 0$ for all $n$. Choose
$\delta>0$ such that $\frac{1+\delta}{1-\delta}<1+\ep$. We 
construct $\beta$ inductively.

Since $|x_1|<1$, there exists $m_1 \in \N$ such that if
$\sigma_1=\{1,2,\ldots, m_1\}$, then
$|x_1|<S_{\sigma_1}(\frac{1}{2})$. Let $n_1 \ge 2$ be the
integer so that $\sum_{n
<n_1}|x_n|<S_{\sigma_1}(\frac{1}{2})$ and
$\sum_{n=1}^{n_1}|x_n|\geq S_{\sigma_1}(\frac{1}{2})$. Then,
there exists $0<t_{n_1}\leq 1$ such that

$$
\sum_{n<n_1}|x_n|+t_{n_1}|x_{n_1}|=S_{\sigma_1}({\textstyle
\frac{1}{2}}) \quad \mbox{ and}\quad 
(1-t_{n_1})|x_{n_1}| +
\sum_{n>n_1}|x_n|=\sum_{n>m_1}({\textstyle \frac{1}{2}})^n.
$$
Now, since $(1-t_{n_1})|x_{n_1}| + |x_{n_1+1}|
<\sum_{n>m_1}(\frac{1}{2})^n$, there exists $m_2>m_1$ such
that if  $\sigma_2=\{m_1+1,\ldots, m_2\}$ we have that
$(1-t_{n_1})|x_{n_1}| + |x_{n_1+1}| <
S_{\sigma_2}(\frac{1}{2})$. Let $n_2>n_1+1$ be the integer
such that $(1-t_{n_1})|x_{n_1}| +
\sum_{{n_1}+1}^{n_2-1}|x_n|<S_{\sigma_2}(\frac{1}{2})$ and
$(1-t_{n_1})|x_{n_1}| + \sum_{{n_1}+1}^{n_2}|x_n|\geq
S_{\sigma_2}(\frac{1}{2})$. Then, there exists $0<t_{n_2}\leq
1$ such that 
$$
(1-t_{n_1})|x_{n_1}| + \sum_{n_1 < n < n_2}|x_n| \ + \
t_{n_2}|x_{n_2}|=S_{\sigma_2}({\textstyle \frac{1}{2}})\quad
\mbox{ and}\quad (1-t_{n_2})|x_{n_2}| + \sum_{n>n_2}
|x_n|=\sum_{n> m_2} ({\textstyle \frac{1}{2}})^n.
$$ 
Continuing this procedure, we can find $n_j, m_j \in \N$,
$n_j +1<n_{j+1}, m_j<m_{j+1} \ \forall j$, and
$0<t_{n_j}\leq 1$ such that, if $\sigma_j=\{m_{j-1}+1,
\ldots, m_j\}$, then  we obtain 
$$
(1-t_{n_{j-1}})|x_{n_{j-1}}| + \sum_{ n_{j-1} < n < n_{j}}
|x_n|\ +\ t_{n_{j}}|x_{n_{j}}|=S_{\sigma_{j}}({\textstyle
\frac{1}{2}}), \quad \mbox{for\ all } j\ge 2.
$$
Now,  with $n_0=0$, choose $\beta$ such that 
\begin{displaymath}
\beta_n^{-1} = \left\{ \begin{array}{lcc}
\displaystyle t_{n_j}{\textstyle
\frac{S_{\sigma_{j}(\frac{1+\delta}{2})}}{S_{\sigma_{j}}
(\frac{1}{2})}}+(1-t_{ n_j}){\textstyle
\frac{S_{\sigma_{j+1}}(\frac{1+\delta}{2})}{S_{\sigma_{j+1}}
(\frac{1}{2})}}& \quad \mbox{if}\quad  & n=n_j,\\ \\
\frac{S_{\sigma_j}(\frac{1+\delta}{2})}{S_{\sigma_j}(\frac{1
}{2})}&\quad \mbox{if}\quad & n_{j-1}<  n < n_j .
\end{array}
\right.
\end{displaymath}

Put $c_j= {\textstyle
\frac{S_{\sigma_j}(\frac{1+\delta}2)}{S_{\sigma_j}(\frac12)}
}$, as we mentioned above  $|c_j|\ge 1$ and $c_j\to \infty$,
then $(\beta_n)_n \in B_{c_0}$. Finally, we have
 
$$
\begin{array}{ll}
 \|(\frac{x_n}{\beta_n})_n\|_{1}&= \d \sum_{n <n_1}
\beta_n^{-1}|x_n| + \beta_{n_1}^{-1}|x_{n_1}|  +  \sum_{j\ge
2} \sum_{n_{j-1}< n < n_j} \beta_n^{-1}|x_n|  + \sum_{j\ge
2} \beta_{n_j}^{-1}|x_{n_j}|\\ 
                                          &=\displaystyle
c_1\sum_{n <n_1} |x_n|+t_{n_1}c_1|x_{n_1}| 
+(1-t_{n_1})c_2|x_{n_1}| \\
& \qquad +\  \   \d\sum_{j\ge 2} \sum_{n_{j-1}< n < n_j} c_j
 |x_n| + \sum_{j\ge 2} t_{n_j} c_j |x_{n_j}| + (1-t_{n_j})
c_{j+1} |x_{n_j}| \\
 &=\displaystyle c_1 \Big(\sum_{n <n_1}
|x_n|+t_{n_1}|x_{n_1}|\Big)
 + \sum_{j\ge2}c_j \Big[(1-t_{n_{j-1}})|x_{n_{j-1}}|
+\sum_{{n_{j-1}} <n<n_j}|x_n|+t_{n_{j}}|x_{n_{j}}|\Big]\\
&=\sum_{j=1}^{\infty}
S_{\sigma_j}(\frac{1+\delta}{2})=\frac{1+\delta}{1-\delta}
<1+\ep.
\end{array}
$$
Thus, the lemma is proved.
\end{proof}
\smallskip

\begin{proposition} Let $E$ and $F$ be Banach spaces, $1\leq
p < \infty$, and $P \in \mathcal P(^mE;F)$.  The following statements are 
equivalent.
\begin{enumerate}
\item[(i)] $P \in \mathcal P_{K_p}(^mE;F)$.

\item[(ii)] There exist a sequence $(y_n)_n \in \ell_p(F)$,
a polynomial $Q \in \mathcal P(^mE;{\ell_{q}}/{\ker
\theta_y})$ and an operator $T\in \mathcal
K_p({\ell_{q}}/{\ker \theta_y};F)$ such that $P=T\circ Q$.
In this case 
$$
\kappa_p(P)=\inf\{\|Q\| \kappa_p(T)\},
$$
where the infimum is taken over all the possible
factorizations as above.

\item[(iii)] There exist a closed subspace $M \subset
\ell_1$, a polynomial $Q \in \mathcal P(^mE;{\ell_{q}}/{\ker
\theta_y})$ and operators $R \in \mathcal \mathcal
K_p({\ell_{q}}/{\ker \theta_y};{\ell_1}/{M})$ and  $S\in
\mathcal K({\ell_1}/{M};F)$ such that $P=S R  Q$. In this
case 
$$
\kappa_p(P)=\inf\{\|Q\| \kappa_p(R)\|S\| \},
$$
where the infimum is taken over all the possible
factorizations as above.
\end{enumerate}
\end{proposition}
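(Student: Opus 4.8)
The plan is to reduce everything to the operator case through the linearization Lemma~\ref{pcomp-lin}, transcribe the Sinha--Karn and Choi--Kim factorizations to polynomials, and invoke the technical Lemma~\ref{lemma-tech} only to pin down the $\kappa_p$-norm in the $\ell_1$-factorization of part (iii). I would first settle the equivalence (i)$\Leftrightarrow$(ii) with its norm formula, and then obtain (iii) from (ii) by splitting the $p$-compact operator that arises.

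For (i)$\Rightarrow$(ii): given $P$ $p$-compact, I write $P=L_P\Lambda$ with $\Lambda(x)=x^m$ and $L_P$ $p$-compact, $\kappa_p(L_P)=\kappa_p(P)$. For any $(y_n)_n\in\ell_p(F)$ with $P(B_E)\subset\pco\{y_n\}$, the identity $L_P(\Gamma\{x^m\colon\|x\|<1\})=\Gamma(P(B_E))$ from the proof of Lemma~\ref{pcomp-lin}, together with the fact that $\pco\{y_n\}$ is already absolutely convex, shows that the same sequence realizes the $p$-compactness of $L_P$. Applying the Sinha--Karn factorization to $L_P$ gives $L_P=\widetilde{\theta_y}\,(L_P)_y$ with $\|(L_P)_y\|\le 1$ and $\kappa_p(\widetilde{\theta_y})=\|y\|_{\ell_p(F)}$, so that $Q:=(L_P)_y\circ\Lambda$ is an $m$-homogeneous polynomial with $\|Q\|\le 1$, $T:=\widetilde{\theta_y}$ is $p$-compact, and $P=TQ$. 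This yields $\inf\{\|Q\|\kappa_p(T)\}\le\|y\|_{\ell_p(F)}$, hence $\le\kappa_p(P)$ after the infimum over admissible $(y_n)_n$. For the converse and (ii)$\Rightarrow$(i): if $P=TQ$ with $T$ $p$-compact, then $P(B_E)\subset\|Q\|\,T(B)$, whence $\m_p(P(B_E))\le\|Q\|\,\m_p(T(B))=\|Q\|\kappa_p(T)$ by the homogeneity $\m_p(\lambda K)=|\lambda|\m_p(K)$; this makes $P$ $p$-compact with $\kappa_p(P)\le\inf\{\|Q\|\kappa_p(T)\}$, closing (i)$\Leftrightarrow$(ii).

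For (ii)$\Leftrightarrow$(iii) the only new ingredient is a norm-controlled factorization of $T=\widetilde{\theta_y}$ through a quotient of $\ell_1$. Fixing $\ep>0$, I apply Lemma~\ref{lemma-tech} to the scalar sequence $(\|y_n\|)_n\in\ell_p$ to get $\beta=(\beta_n)_n\in B_{c_0}$ with $\gamma_n:=\|y_n\|/\beta_n$ satisfying $\|(\gamma_n)_n\|_p\le\|y\|_{\ell_p(F)}(1+\ep)$. I then set $\Phi\colon\ell_q\to\ell_1$, $\Phi((\alpha_n)_n)=(\gamma_n\alpha_n)_n$, and $\Psi\colon\ell_1\to F$, $\Psi(e_n)=\beta_n y_n/\|y_n\|$, so that $\theta_y=\Psi\Phi$. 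Since $(\gamma_n e_n)_n\subset\ell_1$ lies in $\ell_p(\ell_1)$ with $\pco\{\gamma_n e_n\}=\Phi(B_{\ell_q})$, the map $\Phi$ is $p$-compact with $\kappa_p(\Phi)\le\|(\gamma_n)_n\|_p$; and $\Psi$ is compact with $\|\Psi\|=\sup_n\beta_n\le 1$ because its generating sequence is norm null. As $\ker\theta_y=\Phi^{-1}(\ker\Psi)$, the operator $\Phi$ descends to $R\colon\ell_q/\ker\theta_y\to\ell_1/M$ with $M:=\ker\Psi$, and $\Psi$ descends to $S\colon\ell_1/M\to F$, giving $\widetilde{\theta_y}=SR$ with $R$ $p$-compact, $S$ compact, $\kappa_p(R)\le\kappa_p(\Phi)$ and $\|S\|\le\|\Psi\|$. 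Composing with the polynomial $Q$ from (ii) produces $P=SRQ$ with $\|Q\|\kappa_p(R)\|S\|\le\|y\|_{\ell_p(F)}(1+\ep)$.

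The main obstacle, and the reason Lemma~\ref{lemma-tech} is needed, is the exact norm identity in (iii). The easy inequality $\kappa_p(P)\le\inf\{\|Q\|\kappa_p(R)\|S\|\}$ follows from the ideal estimate $\kappa_p(SR)\le\|S\|\kappa_p(R)$ combined with (ii)$\Rightarrow$(i), but the reverse requires exhibiting $\ell_1$-factorizations whose product of norms approaches $\kappa_p(P)$. This is precisely what the splitting $y_n=\gamma_n\cdot(\beta_n y_n/\|y_n\|)$ delivers: Lemma~\ref{lemma-tech} loads almost all of the $\ell_p$-mass onto the $p$-compact factor $\Phi$, keeping $\kappa_p(\Phi)\le\|y\|_{\ell_p(F)}(1+\ep)$, while the residual $c_0$-weight $\beta$ makes the second factor compact with norm at most $1$. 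Choosing $(y_n)_n$ with $\|y\|_{\ell_p(F)}$ near $\kappa_p(P)$ then gives $\|Q\|\kappa_p(R)\|S\|\le\kappa_p(P)(1+\ep)$, and letting $\ep\to 0$ closes the identity. The statement for $p$-compact operators is recovered as the case $m=1$ with $Q$ linear.
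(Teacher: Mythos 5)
Your proof is correct and follows essentially the same route as the paper: (i)$\Leftrightarrow$(ii) via Lemma~\ref{pcomp-lin} together with the Sinha--Karn factorization of $L_P$, and the norm identity in (iii) via Lemma~\ref{lemma-tech} applied to $(\|y_n\|)_n$ with the splitting $y_n=\frac{\|y_n\|}{\beta_n}\cdot\frac{\beta_n y_n}{\|y_n\|}$, which is exactly the paper's pair of maps (your $\Phi$, $\Psi$ are the paper's $R$ and $s$ before passing to quotients). The only difference is cosmetic: you build the factorization $\theta_y=\Psi\Phi$ at the level of $\ell_q$ and $\ell_1$ and then descend to the quotient spaces, whereas the paper defines $R$ and $S$ directly on ${\ell_q}/{\ker\theta_y}$ and ${\ell_1}/{M}$; the norm bookkeeping ($\|Q\|,\|S\|\le 1$, $\kappa_p(R)\le\|y\|_{\ell_p(F)}(1+\ep)$) is the same.
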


\begin{proof} 
 It is clear that either (ii) or (iii) implies (i).

Now, assume that $P$ is a $p$-compact polynomial. By
Lemma~\ref{pcomp-lin}, we have that $P=L_P\Lambda$, where
$L_P$ is a $p$-compact operator and $\Lambda$ is a norm one
polynomial. Applying \cite[Theorem 3.2]{SiKa} to $L_p$ we
have that $L_P=\widetilde \theta_y (L_P)_y$ where 
$y=(y_n)_n\in \ell_p(F)$ is such that
$L_P(B_{\bigotimes^m_{\pi_s}E})\subseteq \pco\{y_n\}$,
with $\kappa_p(\widetilde \theta_y)=\|y\|_{\ell_p(F)}$ and
$\|(L_P)_y\| \le 1$. 

Then, $P= T Q$ for $T=\widetilde \theta_y$ and $Q=
(L_P)_y\Lambda$, see the diagram below. Also, we have that whenever $P=TQ$
as in (ii),  $\kappa_p(P)\le \kappa_p(T)\|Q\|$.
Again by Lemma~\ref{pcomp-lin}, $\kappa_p(L_P)=\kappa_p(P)$
and, since $\|\Lambda\|=1$, the infimum is attained.  

To show that (i) implies (iii), we again consider the
$p$-compact operator $L_P$ and the factorization
$L_P=\widetilde   \theta_y (L_P)_y$.  Now we modify the proof in
\cite[Theorem 3.1]{CK} to obtain the infimum equality. Fix $\varepsilon >0$ and suppose $y=(y_n)_n$ is
such that  $\|(y_n)_n\|_{\ell_p(F)}\le \kappa_p(L_P) +
\varepsilon$. Using Lemma~\ref{lemma-tech}, choose
$(\beta_n)_n \in B_{c_0}$ such that
$(\frac{y_n}{\beta_n})_n$ belongs to $\ell_p(F)$ with
$\|(\frac{y_n}{\beta_n})_n\|_{\ell_p}\leq
\|(y_n)_n\|_{\ell_p(F)} + \varepsilon$. 

For simplicity, we suppose that $y_n\ne 0$ , for all $n$.
Now, define the operator $s\colon\ell_1 \rightarrow F$ as
$s((\gamma_n)_n)=\sum_n \gamma_n y_n \frac{\beta_n}{\|y_n\|}$
which satisfies $\|s\|\le \|\beta\|_\infty\le 1$, and consider the closed subspace $M=\ker
s\subset \ell_1$.  Then, we may
set $R\colon {\ell_q}/{\ker \theta_y}\rightarrow
{\ell_1}/{M}$ the linear operator such that
$R([(\alpha_n)_n])=[(\alpha_n \frac{\|y_n\|}{\beta_n})_n]$
and, with $S$ the quotient map associated to $s$ we get the
following diagram:
$$
\xymatrix{
& E \ar[r]^P \ar[d]_{\Lambda}  & F   
                 &    \\
& \bigotimes^m_{\pi_s}E \ar[ur]^{L_P} \ar[r]_{(L_P)_y}  &
\ell_q / \ker\theta_y \ar[r]_{\quad R} \ar[u]_{\widetilde
\theta_y}  & \ell_1/M \ar[ul]_S}
$$
Then,  $P=S R  Q$, with $Q= (L_P)_y\Lambda$ and $\|S\|,
\|Q\|\le 1$.  Since $R(B_{\ell_q/ \ker \theta_y}) \subset
\pco \{[\frac{\|y_n\|}{\beta_n} e_n]\}$, by the choice
of $\beta$, $R$ is $p$-compact and $\kappa_p(R) \leq
\|(\frac{\|y_n\|}{\beta_n})_n\|_p \leq
\|(y_n)_n\|_{\ell_p(F)} + \varepsilon$.

Finally,
  $$
  \kappa_p(P)\leq  \|S\| \kappa_p(R) \|Q\| \leq
\|(y_n)_n\|_{\ell_p(F)} + \varepsilon\leq \kappa_p (L_P) +
2\varepsilon.
  $$
By Lemma~\ref{pcomp-lin}, the proof is complete.
\end{proof}

It was shown in \cite{DPS_adj}, that an operator $T\colon
E\to F$ is $p$-compact if and only if its bitranspose
$T''\colon E''\to F''$ is $p$-compact with $\kappa_p(T'')\le
\kappa_p(T)$.  In \cite{GaLaTur}, it is shown that, in fact,
$\kappa_p(T'')=\kappa_p(T)$ regardless $T''$ is considered
as an operator on $F''$ or, thanks to the Gantmacher
theorem, as an operator on $F$.  This result, allows us to
show that the Aron-Berner extension is a
$\kappa_p$-isometric extension which preserves the ideal of
$p$-compact homogeneous polynomials. Recall that $\overline
P$ denotes the Aron-Berner extension of $P$.
\smallskip

\begin{proposition} Let $E$ and $F$ be Banach spaces, 
$1\leq p <\infty$, and  $P \in \mathcal P(^mE;F)$. 
Then $P$ is $p$-compact if and only if $\overline P$ is
$p$-compact. Moreover, $\kappa_p(P)=\kappa_p(\overline P)$.
\end{proposition}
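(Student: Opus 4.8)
The plan is to prove the two implications separately, extracting from each one of the two inequalities whose combination yields the claimed isometry. For the \emph{forward} implication I would avoid linearizations altogether and argue directly with the Davie--Gamelin description of the Aron--Berner extension. Recall from \cite{DG} that $\overline P(B_{E''})$ is contained in the weak-star closure, taken in $F''$, of $P(B_E)$. So assume $P$ is $p$-compact and fix $(y_n)_n\in\ell_p(F)$ with $P(B_E)\subset\overline{\pco}\{y_n\}$. The set $K:=\overline{\pco}\{y_n\}$ is norm-compact in $F$; writing $J_F\colon F\to F''$ for the canonical embedding, $J_F(K)$ is then norm-compact, hence weak-star compact, hence weak-star closed in $F''$. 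Since $P(B_E)\subset K$, its weak-star closure stays inside $K$, and therefore $\overline P(B_{E''})\subset K=\overline{\pco}\{y_n\}$. This shows $\overline P$ is $p$-compact with $\kappa_p(\overline P)\le\|(y_n)_n\|_p$; taking the infimum over all admissible sequences gives $\kappa_p(\overline P)\le\kappa_p(P)$.

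For the \emph{converse} I would use that $\overline P$ literally extends $P$: since $\overline P(J_E x)=J_F P(x)$ and $\|J_E x\|=\|x\|$, one has $J_F(P(B_E))\subset\overline P(B_{E''})$. Hence, if $\overline P$ is $p$-compact, the monotonicity of $\m_p$ on subsets, together with its invariance under the isometry $J_F$, under absolutely convex hulls and under closures, gives $\m_p(J_F(P(B_E));F'')\le\m_p(\overline P(B_{E''}))=\kappa_p(\overline P)$. Passing to the linearization of Lemma~\ref{pcomp-lin}, with $X:=\bigotimes^m_{\pi_s}E$ and $L_P(B_X)=\overline{\Gamma(P(B_E))}$, this says precisely that the operator $J_F L_P\colon X\to F''$ is $p$-compact with $\kappa_p(J_F L_P)\le\kappa_p(\overline P)$.

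The crux, and the step I expect to be the main obstacle, is to descend $p$-compactness from $F''$ back to $F$ with control of the norm, i.e.\ to deduce that $L_P\colon X\to F$ itself is $p$-compact with $\kappa_p(L_P)\le\kappa_p(J_F L_P)$; a purely set-theoretic argument fails here because the sequence in $\ell_p(F'')$ witnessing the $p$-compactness of $J_F(P(B_E))$ need not lie in $F$. To get around this I would invoke the isometric duality between $p$-compact and quasi $p$-nuclear operators recalled in the Preliminaries (\cite{DPS_adj}, with the isometric refinement of \cite{GaLaTur}). Writing $J_{F'}\colon F'\to F'''$ for the canonical embedding, one has the identity $(J_F)'J_{F'}=\mathrm{id}_{F'}$, so $L_P'=(J_F L_P)'J_{F'}$; since $(J_F L_P)'$ is quasi $p$-nuclear and $\mathcal{QN}_p$ is an ideal, this forces $\nu_p^Q(L_P')\le\nu_p^Q((J_F L_P)')=\kappa_p(J_F L_P)$, and dualizing back gives $\kappa_p(L_P)=\nu_p^Q(L_P')\le\kappa_p(J_F L_P)\le\kappa_p(\overline P)$. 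In particular $L_P$, and hence $P$, is $p$-compact, and by Lemma~\ref{pcomp-lin} $\kappa_p(P)=\kappa_p(L_P)\le\kappa_p(\overline P)$; combined with the forward inequality this yields $\kappa_p(P)=\kappa_p(\overline P)$. If one prefers to remain within the tools highlighted just before the statement, this last descent can instead be carried out through the bitranspose identity $\kappa_p(L_P'')=\kappa_p(L_P)$ of \cite{GaLaTur}, at the cost of first identifying $L_{\overline P}$ with the composition of $L_P''$ and the canonical norm-one map $\bigotimes^m_{\pi_s}E''\to(\bigotimes^m_{\pi_s}E)''$.
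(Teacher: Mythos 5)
Your argument is correct, and it follows a genuinely different route from the paper's. The paper proves $\kappa_p(\overline P)\le\kappa_p(P)$ by linearizing: it writes $\overline P=L_P''\,\overline{\Lambda}$ and quotes the operator results that $L_P''$ is $p$-compact with $\kappa_p(\overline P)\le\kappa_p(L_P'')$ \cite{DPS_adj} and that in fact $\kappa_p(L_P'')=\kappa_p(L_P)$ \cite{GaLaTur}; the opposite inequality $\kappa_p(P)\le\kappa_p(\overline P)$ is dispatched there with a single ``clearly'', which tacitly rests on the bidual-descent results for operators recalled just before the statement. You do the reverse. For $\kappa_p(\overline P)\le\kappa_p(P)$ you avoid linearization entirely: by Davie--Gamelin, $\overline P(B_{E''})$ lies in the weak-star closure of $P(B_E)$ in $F''$, which is trapped in the norm-compact (hence weak-star closed) set $J_F\big(\overline{\pco}\{y_n\}\big)$, where $J_F\colon F\to F''$ is the canonical inclusion; this is more self-contained and even yields extra information, namely that the same witnessing sequence works for $\overline P$ and that $\overline P$ takes values in $J_F(F)$. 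For $\kappa_p(P)\le\kappa_p(\overline P)$ you supply precisely what the paper's ``clearly'' suppresses: since the sequence witnessing the $p$-compactness of $\overline P$ lives in $F''$ rather than $F$, a set-theoretic restriction argument is not enough, and you descend via the identity $L_P'=(J_FL_P)'J_{F'}$, the ideal property of $\mathcal{QN}_p$, the isometric duality $\kappa_p(T)=\nu_p^Q(T')$ of \cite{GaLaTur}, and Lemma~\ref{pcomp-lin}. In substance both proofs lean on the isometric operator theorems of \cite{GaLaTur}; the paper's is shorter because it outsources both implications to them, whereas yours keeps the extension direction elementary (only \cite{DG} is needed) and makes the nontrivial descent step explicit rather than implicit. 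Your closing alternative (factoring $L_{\overline P}$ through $L_P''$) is essentially the paper's identity $\overline P=L_P''\,\overline{\Lambda}$, so that variant would bring you back to the paper's argument.
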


\begin{proof} Clearly, $P$ is $p$-compact whenever
$\overline P$ is and also $\kappa_p(P)\leq
\kappa_p(\overline P)$. Now, suppose that $P$ is
$p$-compact. By Lemma~\ref{pcomp-lin}, we can factorize $P$
via its linearization as $P=L_P\Lambda$, with $\|\Lambda\|=1$
and $L_P$ a $p$-compact operator.  Since $\overline P =L_P''
\overline \Lambda$, by \cite{DPS_adj},  $\overline P$ is
$p$-compact with  $\kappa_p(\overline P)\leq
\kappa_p(L_P'')$. Now, applying \cite{GaLaTur} and
Lemma~\ref{pcomp-lin},
$\kappa_p(L_P'')=\kappa_p(L_P)=\kappa_p(P)$, which gives the
reverse inequality. 
\end{proof}

We finish this section by relating the transpose of $p$-compact polynomials with quasi $p$-nuclear operators.  
Given an homogeneous
polynomial $P$ its adjoint is defined as the linear operator
$P'\colon F'\to \mathcal P(^m E)$ given by $P'(y')=y'\circ
P$. In \cite{GaLaTur}, it is shown that the transpose of a $p$-compact 
linear operator satisfies the equality $\kappa_p(T)=\nu_p^Q(T')$. Since 
$P'=L_P'$, where $L_P$ is the linearization of $P$, using Lemma~\ref{pcomp-lin} we
immediately have:

\begin{corollary}\label{transpose} An homogeneous polynomial $P\in \mathcal
P(^m E;F)$ is $p$-compact if and only if its transpose
$P'\in \mathcal L(F'; \mathcal P(^m E))$ is quasi
$p$-nuclear, and $\kappa_p(P)=\nu_p^Q(P')$. 
\end{corollary}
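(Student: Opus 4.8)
The plan is to reduce the statement to the corresponding linear duality: I would pass through the linearization $L_P$, transfer $p$-compactness and its norm from $P$ to $L_P$ via Lemma~\ref{pcomp-lin}, and then invoke the operator-level results of \cite{DPS_adj} and \cite{GaLaTur}. The whole argument hinges on making precise the equality $P'=L_P'$ asserted just before the statement, so I would begin there.

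First I would spell out the identification behind $P'=L_P'$. Recall the canonical isometric isomorphism $(\bigotimes^m_{\pi_s}E)'\cong\mathcal P(^m E)$, under which a functional $\varphi$ on the symmetric tensor product corresponds to the polynomial $x\mapsto\varphi(x^m)$. Given $y'\in F'$, the operator transpose $L_P'(y')=y'\circ L_P$ is the functional $\theta\mapsto y'(L_P\theta)$ on $\bigotimes^m_{\pi_s}E$; evaluated on the elementary tensors $x^m$ it reads $y'(L_P(x^m))=y'(P(x))$. Under the identification above this functional corresponds to the polynomial $x\mapsto y'(P(x))=y'\circ P=P'(y')$. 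Hence $P'$ and $L_P'$ coincide as maps $F'\to\mathcal P(^m E)$; in particular one is quasi $p$-nuclear exactly when the other is, and then $\nu_p^Q(P')=\nu_p^Q(L_P')$.

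With this in place the equivalence is a chain of already-established facts. By Lemma~\ref{pcomp-lin}, $P$ is $p$-compact if and only if $L_P$ is $p$-compact, and in that case $\kappa_p(P)=\kappa_p(L_P)$. By the operator duality recalled in the preliminaries (\cite[Corollary~3.4]{DPS_adj} together with \cite[Proposition~3.8]{DPS_adj}), $L_P$ is $p$-compact if and only if $L_P'$ is quasi $p$-nuclear. Combining these with $P'=L_P'$ gives that $P$ is $p$-compact if and only if $P'$ is quasi $p$-nuclear. For the norm equality I would apply the linear result of \cite{GaLaTur}, namely $\kappa_p(T)=\nu_p^Q(T')$ for every $p$-compact $T$, to $T=L_P$, and then read off $\kappa_p(P)=\kappa_p(L_P)=\nu_p^Q(L_P')=\nu_p^Q(P')$.

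The only genuinely delicate point is the first step: verifying that the polynomial adjoint $P'$ (valued in $\mathcal P(^m E)$) and the operator transpose $L_P'$ (valued in $(\bigotimes^m_{\pi_s}E)'$) really agree under the duality $(\bigotimes^m_{\pi_s}E)'\cong\mathcal P(^m E)$, including that this isomorphism is isometric so that the $\nu_p^Q$-norms match. Once that identification is nailed down, everything else is a formal consequence of Lemma~\ref{pcomp-lin} and the cited linear theorems, so the corollary follows \emph{immediately}, as the preceding remark indicates.
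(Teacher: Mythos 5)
Your proposal is correct and follows essentially the same route as the paper: the authors also prove the corollary by identifying $P'=L_P'$, invoking Lemma~\ref{pcomp-lin} to pass $p$-compactness and the $\kappa_p$-norm to the linearization, and applying the linear duality $\kappa_p(T)=\nu_p^Q(T')$ from \cite{GaLaTur}. The only difference is that you make explicit the isometric identification $(\bigotimes^m_{\pi_s}E)'\cong\mathcal P(^m E)$ behind $P'=L_P'$, which the paper simply asserts.
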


When this manuscript was completed we learned that R. Aron and P. Rueda were also been working on $p$-compact polynomials \cite{AR1}. They obtained Lemma~\ref{pcomp-lin} and a non isometric version of the corollary above. 

\section{The $p$-compact holomorphic mappings}

In this section we undertake
a detailed study of  $p$-compact
holomorphic mappings, whose definition recovers the notion
of compact holomorphic mappings for $p=\infty$,  \cite{AMR}.
Recall that for  $E$ and $F$  Banach spaces, $1\le p\le
\infty$, a holomorphic function $f\colon E\to F$ is said to
be $p$-compact at $x_0$ if there is a neighborhood $V_{x_0}$
of $x_0$, such that $f(V_{x_0})\subset F$ is a relatively
$p$-compact set. Also, $f\in \mathcal H(E;F)$ is said to be
$p$-compact  if it is $p$-compact at $x$ for all $x\in E$. We denote by 
 $\mathcal H_{K_p}( E; F)$ the space of $p$-compact entire functions and by  $\mathcal H_{K}( E; F)$ the space of compact holomorphic mappings. 
For homogeneous polynomials, it is equivalent to be compact
($p$-compact) at some point of $E$ or to be compact
($p$-compact) at every point of the space  \cite{AMR, AS}.
The same property remains valid for compact holomorphic
mappings \cite[Proposition~3.4]{AS}. We will see that 
the situation is very different for $p$-compact holomorphic
functions, $1\le p<\infty$. Furthermore, we will show that
$p$-compact holomorphic mappings, $1\le p<\infty$,  behave
more like nuclear than compact holomorphic functions.

Having in mind that $(\mathcal P_{K_p}(^m E; F), \kappa_p)$
is a polynomial Banach ideal with
$\kappa_p(P)=\m_p(P(B_E))$, and that all polynomials in the
Taylor series expansion of a $p$-compact holomorphic
function at $x_0$ are $p$-compact \cite[Proposition
3.5]{AMR}, we propose
to connect the concepts of $p$-compact holomorphic mappings and the size of 
$p$-compact sets measured by $\m_p$.  We start with a simple but useful
lemma.

\begin{lemma}\label{summing p-comp}
Let $E$ be a Banach space and consider $K_1, K_2,\ldots$ a
sequence of relatively $p$-compact  sets in $E$, $1\leq p <
\infty$. If $\sum_{j=1}^{\infty} \m_p(K_j) < \infty$, then
the series $\sum_{j=1}^{\infty} x_j$ is  absolutely
convergent for any choice of $x_j \in K_j$ and the set
$K=\{\sum_{j=1}^{\infty} x_j \colon x_j \in K_j\}$ is
relatively $p$-compact with $\m_p(K)\leq \sum_{j=1}^{\infty}
\m_p(K_j) < \infty$.
\end{lemma}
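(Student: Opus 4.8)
The plan is to treat the two assertions separately, disposing of the absolute convergence first. Recall from the preliminaries that every element $x$ of a relatively $p$-compact set $K$ satisfies $\|x\|\le \m_p(K)$. Hence, for any choice $x_j\in K_j$, we get $\sum_{j}\|x_j\|\le \sum_j \m_p(K_j)<\infty$, so the series $\sum_j x_j$ is absolutely convergent and the set $K$ is well defined.

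For the $p$-compactness together with the norm estimate, I would fix $\ep>0$ and, for each $j$, choose a sequence $(x^{(j)}_n)_n\in \ell_p(E)$ with $K_j\subset \pco\{x^{(j)}_n\}$ and $a_j:=\|(x^{(j)}_n)_n\|_p\le \m_p(K_j)+\ep\, 2^{-j}$; discarding the indices with $a_j=0$ (for which $K_j=\{0\}$ is irrelevant) we may assume $a_j>0$ and set $S:=\sum_j a_j\le \sum_j\m_p(K_j)+\ep$. The naive idea of merging all the $(x^{(j)}_n)_{j,n}$ into a single sequence does produce an element of $\ell_p(E)$ of norm $S$, but it fails to witness $p$-compactness: if $x=\sum_j x_j$ with $x_j=\sum_n \alpha^{(j)}_n x^{(j)}_n$ and $(\alpha^{(j)}_n)_n\in B_{\ell_q}$, then the merged coefficient sequence only has $\ell_q$-norm bounded by $(\sum_j 1)^{1/q}=\infty$. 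Resolving this coefficient blow-up is the main obstacle.

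The key step is to redistribute mass between the vectors and their coefficients, exploiting the conjugacy $\frac1p+\frac1q=1$. I would rescale by $c_j:=(S/a_j)^{1/q}$, replacing $x^{(j)}_n$ by $c_j x^{(j)}_n$ and $\alpha^{(j)}_n$ by $\alpha^{(j)}_n/c_j$ (for $q=\infty$, i.e.\ $p=1$, one simply takes $c_j=1$). Since $p/q=p-1$, the enumerated sequence $(c_j x^{(j)}_n)_{j,n}$ still lies in $\ell_p(E)$ with $\ell_p$-norm $(\sum_j c_j^p a_j^p)^{1/p}=(S^{p-1}\sum_j a_j)^{1/p}=S$, while the merged coefficients satisfy $\sum_{j,n}|\alpha^{(j)}_n/c_j|^q\le \sum_j c_j^{-q}=\sum_j a_j/S=1$, hence lie in $B_{\ell_q}$. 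By Hölder the resulting series converges absolutely, so rearrangement is legitimate and every $x=\sum_j x_j\in K$ is a genuine $p$-convex combination of $(c_j x^{(j)}_n)_{j,n}$. Thus $K\subset \pco\{c_j x^{(j)}_n\}$, giving $\m_p(K)\le S\le \sum_j\m_p(K_j)+\ep$.

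Letting $\ep\to 0$ yields $\m_p(K)\le \sum_j \m_p(K_j)$, completing the argument. The only genuine difficulty is the coefficient blow-up of the naive merge, and it is resolved precisely by the scaling $c_j=(S/a_j)^{1/q}$, which is the unique choice balancing the $\ell_p$-cost $\sum_j c_j^p a_j^p$ against the $\ell_q$-budget $\sum_j c_j^{-q}$.
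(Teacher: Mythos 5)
Your proof is correct and takes essentially the same route as the paper's: both merge the witnessing sequences $(x^{(j)}_n)_{j,n}$ into a single element of $\ell_p(E)$ after a $j$-dependent rescaling with exponent $1/q$ (with the trivial scaling when $p=1$), using H\"older conjugacy to trade mass between the vectors and the coefficients. The only difference is bookkeeping: the paper scales by $\lambda_j=\m_p(K_j)^{-1/q}$ and keeps the dilation factor $\bigl(\sum_j \m_p(K_j)\bigr)^{1/q}$ outside, obtaining $K\subset \bigl(\sum_j \m_p(K_j)\bigr)^{1/q}\pco\{z_k\}$, whereas you absorb that factor into $c_j=(S/a_j)^{1/q}$ so that $K\subset \pco\{c_j x^{(j)}_n\}$ directly; the two computations coincide.
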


\begin{proof} Note that $K$ is  well defined since for
$x_j\in K_j$, 
$\|x_j\| \le \m_p(K_j)$,  for all $j$ and
$\sum_{j=1}^{\infty} \m_p(K_j) < \infty$.

First, suppose that $p>1$ and fix  $\varepsilon >0$.
For each $j \in \N$, we may assume that $K_j$ is nonempty
and we may choose $(x_n^j)_{n}\in\ell_p(E)$ such that
$K_j \subset \pco\{x_n^j\colon {n\in \mathbb N}\}$ with $\|(x_n^j)_n\|_p \leq
\m_p(K_j) (1+\dfrac \varepsilon{2^j}\m_p(K_j)^{-1})^{1/p}$.
Now, take $\lambda_j= \m_p(K_j)^{-1/q}$, where $\frac 1p +
\frac 1q=1$ and define the sequence $(z_k)_k\subset E$ such
that each term is of the form $\lambda_j x^j_n$, following
the standard order:
$$
\xymatrix{
\lambda_1 x^1_1  \ar[d]   &\lambda_1 x^1_2 \ar[r]  
&\lambda_1 x^1_3 \ar[dl] \quad \ldots \\
\lambda_2 x^2_1  \ar[ur]  &\lambda_2 x^2_2 \ar[dl] 
&\lambda_2 x^2_3 \quad \ldots \\
\lambda_3 x^3_1  \ar[d]   &\lambda_3 x^3_2 \ar[ur] 
&\lambda_3 x^3_3  \quad \ldots\\
       \ar[ur]  &           &   \\}
$$
Then 
$$
\begin{array}{r l}
\displaystyle \sum_{k=1}^{\infty} \|z_k\|^p =&\displaystyle
\sum_{j=1}^{\infty}\sum_{n=1}^{\infty} \lambda_j^p\|
x^j_n\|^p \\
                              =& \displaystyle
\sum_{j=1}^{\infty}\m_p(K_j)^{-p/q}
\|(x^j_n)_n\|^p_{\ell_p(E)} \\
                          \leq & \displaystyle
\sum_{j=1}^{\infty} \m_p(K_j)^{-p/q} \m_p(K_j)^p (1+\frac
\varepsilon{2^j}\m_p(K_j)^{-1}) \\
                             = & \displaystyle
\sum_{j=1}^{\infty} \m_p(K_j) + \varepsilon. \\
\end{array}
$$                             
Hence, $(z_k)_k$ belongs to $\ell_p(E)$ and
$\|(z_k)_k\|_{\ell_p(E)} \leq (\sum_{j=1}^{\infty} \m_p(K_j)
+ \varepsilon)^{1/p}$.

Now, if $K= \{\sum_{j=1}^{\infty} x_j \colon x_j \in
K_j\}$, 
we claim that $K\subset (\sum_{j=1}^{\infty}
\m_p(K_j))^{1/q}\pco \{z_k\}$.
Indeed, if $x \in K$, then $x=\sum_{j=1}^{\infty} x_j$ with
$x_j \in K_j$. Fix $j \in \N$, there  exists
$(\alpha_n^j)_{n} \in B_{\ell_q}$ such that
$x_j=\sum_{n=1}^{\infty} \alpha_n^j x_n^j$. Then,
$x=\sum_{j=1}^{\infty}\sum_{n=1}^{\infty} \alpha_n^j x_n^j$
and the series converges absolutely as the partial sums of
$|\alpha_n^j|\|x_n^j\|$ are clearly convergent with  the order
given above.  We may write  $x=\sum
\m_p(K_j)^{1/q} \alpha_n^j \lambda_j x_n^j $ with

$$
\sum |\m_p(K_j)^{1/q} \alpha_n^j
|^q=\sum_{j=1}^{\infty}\sum_{n=1}^{\infty} |\alpha_n^j|^{q}
\m_p(K_j)\leq \sum_{j=1}^{\infty}\m_p(K_j).
$$
Then $K\subset (\sum_{j=1}^{\infty} \m_p(K_j))^{1/q}\pco
\{z_k\}$. It follows that $K$ is $p$-compact and 
$$
\m_p(K) \leq (\sum_{j=1}^{\infty} \m_p(K_j))^{1/q}
\|(z_k)_k\|_{\ell_p(E)} \leq (\sum_{j=1}^{\infty}
\m_p(K_j))^{1/q} (\sum_{j=1}^{\infty} \m_p(K_j) +
\varepsilon)^{1/p}.
$$
Letting $\varepsilon \rightarrow 0$, we conclude that
$\m_p(K) \leq \sum_{j=1}^{\infty} \m_p(K_j)$.

With the usual modifications, the case $p=1$  follows from
the above construction considering $\lambda_j=1$, for all
$j$.
\end{proof}

Aron, Maestre and Rueda \cite[Proposition 3.5]{AMR} prove that if $f$ is a $p$-compact
holomorphic mapping at some $x_0\in E$,  every homogeneous
polynomial of the Taylor series expansion of $f$ at $x_0$ is
$p$-compact. At the light of the
existent characterization for compact holomorphic mappings \cite{AS},
 they also wonder if the converse is true \cite[Problem 5.2]{AMR}. To
tackle this question  we need to define the $p$-compact
radius of convergence of a function $f$ at $x_0\in E$. 

\begin{definition}\label{def-p-radius} Let $E$ and $F$ be Banach spaces, $f \in
\mathcal H(E;F)$ and $x_0 \in E$. If $\sum_{m=0}^\infty
P_mf(x_0)$ is the Taylor series expansion of $f$ at $x_0$,
we say that $$r_p(f,x_0)=1/\limsup
\kappa_p(P_mf(x_0))^{1/m}$$ is the radius of $p$-compact
convergence of $f$ at $x_0$, for $1\leq p <\infty$.

As usual, we understand that whenever $\limsup
\kappa_p(P_mf(x_0))^{1/m}=0$, the radius of $p$-compact
convergence is infinite. Also, if $P_mf(x_0)$ fails to be $p$-compact for some $m$, $f$ fails to be $p$-compact and  $r_p(f,x_0)=0$.
\end{definition}

The following lemma is obtained by a slight modification of the 
generalized Cauchy formula given in the proof of
\cite[Proposition~3.5]{AMR}, which asserts that if $f \in
\mathcal H(E;F)$ and $x_0 \in E$, fixed  $\varepsilon >0$ we
have that $P_mf(x_0)(B_{\varepsilon}(0)) \subset
\overline{\co} \{ f(B_{\varepsilon}(x_0))\}$, where $B_\varepsilon(x_0)$ stands for the open ball of center $x_0$ and radius $\varepsilon$. We state the
result as it will be used in Section~4, also we are
interested in measuring the $\m_p$-size of  $P_mf(x_0)(V)$
in terms of the $\m_p$-size of $f(x_0+V)$ for certain 
absolutely convex  open sets $V\subset E$.

\begin{lemma}\label{AMR-Cauchy}
Let $E$ and $F$ be Banach spaces, let $x_0 \in E$ and let
$V\subset E$ be an absolutely convex  open set. Let
$f \in \mathcal H(E;F)$ whose Taylor series expansion at
$x_0$ is given by $\sum_{m=0}^\infty P_mf(x_0)$. Then 
\begin{enumerate}
\item[(a)] $P_mf(x_0)(V)\subset \overline{\co} \{
f(x_0+V)\}$, for all $m$.
\item[(b)]  If $f(x_0+V)$ is relatively $p$-compact then  $\m_p(P_mf(x_0)(V))\leq \m_p(f(x_0+V))$, for all $m$.
\end{enumerate}
\end{lemma}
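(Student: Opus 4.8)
The plan is to reduce part (a) to the one-variable Cauchy formula and then exploit that $V$ is balanced. Fixing $y\in V$, I would consider the entire function of one complex variable $g(\lambda)=f(x_0+\lambda y)$, whose Taylor coefficients at the origin are exactly $g^{(m)}(0)/m!=P_mf(x_0)(y)$, since $f(x_0+\lambda y)=\sum_m \lambda^m P_mf(x_0)(y)$. The vector-valued Cauchy integral formula then gives
$$
P_mf(x_0)(y)=\frac{1}{2\pi i}\int_{|\lambda|=1}\frac{f(x_0+\lambda y)}{\lambda^{m+1}}\,d\lambda
=\frac{1}{2\pi}\int_0^{2\pi} e^{-im\theta}\,f(x_0+e^{i\theta}y)\,d\theta .
$$

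For part (a), the key observation is that $V$ is absolutely convex, hence balanced, so $e^{i\theta}y\in V$ and therefore $x_0+e^{i\theta}y\in x_0+V$ for every $\theta$; thus each value $f(x_0+e^{i\theta}y)$ lies in $f(x_0+V)$. Since $|e^{-im\theta}|=1$, the integrand takes its values in the balanced set $\Gamma\{f(x_0+V)\}$, and the integral against the probability measure $\frac{d\theta}{2\pi}$ is a norm limit of Riemann sums $\sum_k c_k\, f(x_0+e^{i\theta_k}y)$ with $\sum_k|c_k|\le 1$. Each such sum is an absolutely convex combination of elements of $f(x_0+V)$, so it lies in $\Gamma\{f(x_0+V)\}$, and passing to the limit yields $P_mf(x_0)(y)\in \overline{\co}\{f(x_0+V)\}$. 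As $y\in V$ is arbitrary, this would give (a).

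For part (b), I would assume $f(x_0+V)$ relatively $p$-compact. Then $\Gamma(f(x_0+V))$ and its closure are relatively $p$-compact as well, so by (a) the set $P_mf(x_0)(V)$, being contained in $\overline{\co}\{f(x_0+V)\}$, is relatively $p$-compact. Using the monotonicity of $\m_p$ under inclusion together with the invariance $\m_p(K)=\m_p(\overline K)=\m_p(\Gamma(K))$ recorded in the preliminaries, one obtains
$$
\m_p\big(P_mf(x_0)(V)\big)\le \m_p\big(\overline{\co}\{f(x_0+V)\}\big)=\m_p\big(f(x_0+V)\big)
$$
for every $m$, which is (b).

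The main obstacle is the justification of part (a): one must make precise that the vector-valued Cauchy integral lands in the closed (absolutely) convex hull of the range of its integrand. This rests on the standard fact that an integral against a probability measure lies in the closed convex hull of the essential range of the integrand, applied here to $\theta\mapsto e^{-im\theta}f(x_0+e^{i\theta}y)$. The unimodular factor $e^{-im\theta}$ is exactly what forces the balanced hull rather than the plain convex hull, and it is absorbed precisely because the balancedness of $V$ guarantees that $\Gamma\{f(x_0+V)\}$ already contains all the rotated values; this is the one place where the hypothesis that $V$ is absolutely convex, and not merely convex, is essential.
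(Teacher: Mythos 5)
Your proof is correct and follows essentially the same route the paper intends: the lemma is stated there as a ``slight modification of the generalized Cauchy formula'' from the proof of \cite[Proposition~3.5]{AMR}, and your argument --- the vector-valued Cauchy integral $P_mf(x_0)(y)=\frac{1}{2\pi}\int_0^{2\pi}e^{-im\theta}f(x_0+e^{i\theta}y)\,d\theta$, the Riemann-sum approximation, and the use of balancedness of $V$ to keep $x_0+e^{i\theta}y$ inside $x_0+V$ --- is precisely that modification, with part (b) then following from monotonicity of $\m_p$ and the equalities $\m_p(K)=\m_p(\overline K)=\m_p(\Gamma(K))$. The only remark worth making is that your limit naturally lands in the closed \emph{absolutely} convex hull $\overline{\Gamma}(f(x_0+V))$, which is what $\overline{\co}$ is meant to denote here; in any case the distinction is immaterial for (b), since $\m_p$ takes the same value on a set, its convex hull and its absolutely convex hull.
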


Now we are ready to give a characterization of $p$-compact
functions in terms of the polynomials in its Taylor series
expansion and the $p$-compact radius of convergence.  

\begin{proposition}\label{p-comp radius}
Let $E$ and $F$ be Banach spaces and let $f \in \mathcal
H(E;F)$ whose  Taylor series expansion  at $x_0$ is given by
$\sum_{m=0}^\infty P_mf(x_0)$. For $1\leq p < \infty$, the
following statements are equivalent.
\begin{enumerate}
\item[(i)] $f$ is $p$-compact at $x_0$.
\item[(ii)] $P_mf(x_0) \in \mathcal P_{K_p}(^mE;F)$, for all
$m$ and $\limsup \kappa_p(P_mf(x_0))^{1/m}<\infty$.
\end{enumerate}
\end{proposition}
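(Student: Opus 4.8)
The plan is to prove the two implications separately, using the two preliminary lemmas as the engines: Lemma~\ref{AMR-Cauchy} (the Cauchy-type estimate) will drive (i)$\Rightarrow$(ii), while Lemma~\ref{summing p-comp} (summing $p$-compact sets) will drive (ii)$\Rightarrow$(i). The bridge between the two is the homogeneity of each component, which gives $P_mf(x_0)(rB_E)=r^m P_mf(x_0)(B_E)$ and hence $\m_p(P_mf(x_0)(rB_E))=r^m\kappa_p(P_mf(x_0))$, together with the normalization $\m_p(K)=\m_p(\overline K)=\m_p(\Gamma(K))$. This is exactly what converts a geometric series in the radius $r$ into a root-test condition on $\limsup\kappa_p(P_mf(x_0))^{1/m}$.

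For (i)$\Rightarrow$(ii) I would start from a $p$-compactness neighborhood of $x_0$ and shrink it to an absolutely convex open ball $V=\varepsilon B_E^{\circ}$ with $f(x_0+V)$ relatively $p$-compact (a subset of a relatively $p$-compact set is again such). By Lemma~\ref{AMR-Cauchy}(a), $P_mf(x_0)(V)\subset\overline{\co}\{f(x_0+V)\}$, which is relatively $p$-compact; by homogeneity $P_mf(x_0)(B_E)$ is then relatively $p$-compact, so $P_mf(x_0)\in\mathcal P_{K_p}(^mE;F)$ for all $m$. Lemma~\ref{AMR-Cauchy}(b) next gives $\m_p(P_mf(x_0)(V))\le\m_p(f(x_0+V))$, and homogeneity rewrites the left-hand side as $\varepsilon^m\kappa_p(P_mf(x_0))$. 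Setting $C=\m_p(f(x_0+V))<\infty$ yields $\kappa_p(P_mf(x_0))^{1/m}\le C^{1/m}/\varepsilon$, whence $\limsup\kappa_p(P_mf(x_0))^{1/m}\le 1/\varepsilon<\infty$.

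For (ii)$\Rightarrow$(i) I would put $L=\limsup\kappa_p(P_mf(x_0))^{1/m}$ and fix any $r$ with $0<r<1/L$ (any $r>0$ if $L=0$). Taking $K_m=P_mf(x_0)(rB_E)$, homogeneity gives $\m_p(K_m)=r^m\kappa_p(P_mf(x_0))$, so $\limsup(\m_p(K_m))^{1/m}=rL<1$ and the root test forces $\sum_m\m_p(K_m)<\infty$ (the $m=0$ term being $\|f(x_0)\|$). Lemma~\ref{summing p-comp} then shows $K=\{\sum_m x_m:\,x_m\in K_m\}$ is relatively $p$-compact. Finally, since $\|P_mf(x_0)\|\le\kappa_p(P_mf(x_0))$ the ordinary radius of convergence is at least $r_p(f,x_0)=1/L>r$, so the Taylor series represents $f$ on $x_0+rB_E$; for $v\in rB_E$ one has $f(x_0+v)=\sum_m P_mf(x_0)(v)$ with $P_mf(x_0)(v)\in K_m$, giving $f(x_0+rB_E)\subset K$. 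Thus $f$ is $p$-compact at $x_0$.

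I expect the main difficulty to be bookkeeping rather than a genuine obstacle. The one point that must be argued, not merely asserted, is that the chosen radius $r$ lies inside the region where the Taylor series actually reproduces $f$; this is exactly what the inequality $\|P_mf(x_0)\|\le\kappa_p(P_mf(x_0))$ secures, by dominating $r_p(f,x_0)$ by the usual radius of convergence. The remaining care is to handle the open-versus-closed ball and the $m=0$ term so that homogeneity and $\m_p(\,\cdot\,)=\m_p(\overline{\,\cdot\,})$ apply cleanly, and to note that the containment $f(x_0+rB_E)\subset K$ is legitimate precisely because the absolute convergence of $\sum_m P_mf(x_0)(v)$ is already part of the conclusion of Lemma~\ref{summing p-comp}.
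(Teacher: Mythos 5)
Your proposal is correct and follows essentially the same route as the paper: Lemma~\ref{AMR-Cauchy} plus homogeneity gives (i)$\Rightarrow$(ii), and Lemma~\ref{summing p-comp} plus the root test applied to $\m_p(P_mf(x_0)(rB_E))=r^m\kappa_p(P_mf(x_0))$ gives (ii)$\Rightarrow$(i). The only (harmless) divergence is in how the representation $f(x_0+v)=\sum_m P_mf(x_0)(v)$ on the chosen ball is secured: the paper simply shrinks $\varepsilon$ so that it is both smaller than $r_p(f,x_0)$ and inside the neighborhood of uniform convergence furnished by the definition of holomorphy, whereas you invoke the standard fact that for an entire function the Taylor series converges to $f$ on any ball of radius below the ordinary radius of convergence $1/\limsup\|P_mf(x_0)\|^{1/m}\ge r_p(f,x_0)$.
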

\begin{proof} To prove that (i) implies (ii), take
$\varepsilon >0$ such that $f(B_{\varepsilon}(x_0))$ is
relatively $p$-compact and $f(x)=\sum_{m=1}^{\infty}
P_mf(x_0)(x-x_0)$, with uniform convergence  in
$B_{\varepsilon}(x_0)$. By  
\cite[Proposition~3.5]{AMR},  $P_mf(x_0)(\varepsilon B_E) 
\subset \overline{\co} \{ f(B_{\varepsilon}(x_0))\}$ and
$P_mf(x_0)$ is $p$-compact, for all $m$. Moreover, by the
lemma above,
$$
\kappa_p(P_mf(x_0))=\m_p(P_mf(x_0)(B_E))=\frac{1}{
\varepsilon ^m}\m_p(P_mf(x_0)(\varepsilon B_{E}))\leq
\frac{1}{\varepsilon ^m} \m_p(\overline{co}\{
f(B_{\varepsilon}(x_0))\}).
$$
It follows that $\limsup \kappa_p(P_mf(x_0))^{1/m}\leq
\frac{1}{\varepsilon}$, as we wanted to prove. 

Conversely, suppose that $\limsup
\kappa_p(P_mf(x_0))^{1/m}=C>0$ and choose $0< \varepsilon 
<r_p(f,x_0)$ such that, for all $x \in
B_{\varepsilon}(x_0)$, $f(x)=\sum_{m=1}^{\infty}
P_mf(x_0)(x-x_0)$, with uniform convergence. Now we have 
\begin{equation*}\label{f_inclusion_Pmf}
f(B_{\varepsilon}(x_0)) \subset \{\sum_{m=1}^{\infty} x_m
\colon x_m \in P_mf(x_0)(\varepsilon B_{E})\}. 
\end{equation*}

By Lemma~\ref{summing p-comp}, we obtain the result if
we prove that $\sum_{m=1}^{\infty}
\m_p(P_mf(x_0)(\varepsilon B_{E}))<\infty$, which follows
from the equality
$$
\sum_{m=1}^{\infty} \m_p(P_mf(x_0)(\varepsilon B_{E}))= 
\sum_{m=1}^{\infty}\varepsilon^{m}\kappa_p(P_mf(x_0)),
$$
and the choice of $\varepsilon$. 
\end{proof}

\begin{remark}\label{m_p de funcion}{\rm 
Let $f$ be a $p$-compact holomorphic mapping at $x_0$ and
let $\sum_{m=0}^\infty P_mf(x_0)$ be its Taylor series
expansion at $x_0$. Then, if $\varepsilon < r_p(f,x_0)$,  
$$
\m_p(f(B_{\varepsilon}(x_0))\leq \displaystyle
\sum_{m=1}^{\infty} \m_p(P_mf(x_0)(\varepsilon B_{E})),
$$
where the right hand series is convergent.}
\end{remark}

The $p$-compact radius has the following natural property.

\begin{proposition}\label{convergencia_radio}
Let $E$ and $F$ be Banach spaces,  $1\leq p< \infty$,  and $f
\in \mathcal H(E;F)$.  Suppose that $f$ is $p$-compact  at
$x_0$ with positive $p$-compact radius  $r=r_p(f,x_0)$. Then
$f$ is $p$-compact for all $x \in B_{r}(x_0)$. Also, if $f$
is $p$-compact at $x_0$ with infinite $p$-compact  radius,
then $f$ is $p$-compact at $x$, for all $x\in E$. 
\end{proposition}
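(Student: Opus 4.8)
The plan is to exploit the definition of $p$-compactness at a point, which asks only that \emph{some} neighborhood of the point have relatively $p$-compact image. Consequently, to prove that $f$ is $p$-compact at an arbitrary $x_1\in B_r(x_0)$ I do not need a neighborhood centered at $x_1$: it suffices to exhibit one open set containing $x_1$ on which $f$ has relatively $p$-compact image, and the natural candidate is a ball centered at the original point $x_0$. Concretely, I would fix $x_1\in B_r(x_0)$ and choose $\varepsilon$ with $\|x_1-x_0\|<\varepsilon<r$; then $B_{\varepsilon}(x_0)$ is an open neighborhood of $x_1$, and the whole statement reduces to showing that $f(B_{\varepsilon}(x_0))$ is relatively $p$-compact for every $\varepsilon<r$.

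First I would record that, since $f$ is $p$-compact at $x_0$, Proposition~\ref{p-comp radius} guarantees that each $P_mf(x_0)$ is $p$-compact and that $\limsup_m \kappa_p(P_mf(x_0))^{1/m}=1/r$. For $\varepsilon<r$ the root test then yields $\sum_{m}\varepsilon^{m}\kappa_p(P_mf(x_0))<\infty$, that is, $\sum_m \m_p(P_mf(x_0)(\varepsilon B_E))<\infty$. Combining the inclusion $f(B_{\varepsilon}(x_0))\subset\{\sum_m x_m\colon x_m\in P_mf(x_0)(\varepsilon B_E)\}$, valid on the region of uniform convergence of the Taylor series, with Lemma~\ref{summing p-comp}, I conclude that $f(B_{\varepsilon}(x_0))$ is relatively $p$-compact. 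This is precisely the content of Remark~\ref{m_p de funcion}, so in the write-up I can simply invoke it.

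With this crux in hand the assembly is immediate: for $x_1\in B_r(x_0)$ and $\varepsilon\in(\|x_1-x_0\|,r)$, the map $f$ sends the neighborhood $B_{\varepsilon}(x_0)$ of $x_1$ into the relatively $p$-compact set $f(B_{\varepsilon}(x_0))$, hence $f$ is $p$-compact at $x_1$; since $x_1$ was arbitrary this covers all of $B_r(x_0)$. For the infinite-radius case the same reasoning applies verbatim: given any $x\in E$, pick any $\varepsilon>\|x-x_0\|$, note that $f(B_{\varepsilon}(x_0))$ is relatively $p$-compact (the series $\sum_m\varepsilon^m\kappa_p(P_mf(x_0))$ converges for every $\varepsilon$ when the radius is infinite), and observe that $B_{\varepsilon}(x_0)$ is a neighborhood of $x$; thus $f$ is $p$-compact at every $x\in E$.

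I expect no serious obstacle here; the only point that needs care is the conceptual one of using a neighborhood centered away from $x_1$, which lets the estimate on the $p$-compact radius at $x_0$ do all the work. A more computational alternative would be to re-expand the Taylor series of $f$ about $x_1$ and bound $\kappa_p(P_kf(x_1))$ directly through the mixed-argument polynomials $\overset\vee{P_mf(x_0)}\big((\cdot)^k,(x_1-x_0)^{m-k}\big)$; this route works but requires a Cauchy-integral estimate for $\kappa_p$, since the crude polarization bound $\frac{m^m}{m!}$ loses too much and fails to produce a finite $\limsup$. It is therefore both heavier and unnecessary given Remark~\ref{m_p de funcion}.
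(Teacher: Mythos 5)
Your proof is correct, and it takes a shorter route than the paper's, although both run on the same engine. The two arguments ultimately rest on Lemma~\ref{summing p-comp} plus the root test for $\sum_m \varepsilon^m\kappa_p(P_mf(x_0))$; the difference is the choice of neighborhood. The paper (taking $x_0=0$) fixes $x$ with $\|x\|<r$, invokes \cite[Proposition 1, p.26]{NACH} to get uniform convergence of the Taylor series at $0$ on some ball $B_\varepsilon(x)$ \emph{centered at} $x$, arranges $\|x\|+\varepsilon<r$, and then bounds $\sum_m \m_p\big(P_mf(0)(B_\varepsilon(x))\big) \le \sum_m \big((\|x\|+\varepsilon)\kappa_p(P_mf(0))^{1/m}\big)^m<\infty$, so that Lemma~\ref{summing p-comp} yields a centered neighborhood of $x$ with relatively $p$-compact image. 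You instead take the single ball $B_\varepsilon(x_0)$ with $\|x_1-x_0\|<\varepsilon<r$, whose image is relatively $p$-compact by Remark~\ref{m_p de funcion}, and exploit that the definition of $p$-compactness at $x_1$ accepts any neighborhood of $x_1$, centered or not. What your route buys is brevity: no off-center expansion result and no rescaling estimate are needed, since only balls centered at $x_0$ enter and those are exactly what the Remark handles. What the paper's route buys is the off-center inclusion $f(B_\varepsilon(x))\subset\{\sum_m x_m \colon x_m\in P_mf(0)(B_\varepsilon(x))\}$ as a reusable device: the same maneuver reappears in the proof of Proposition~\ref{p-compact en 0}, where neighborhoods of points away from the expansion point are genuinely required. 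One point you should make explicit in a final write-up: your argument needs the inclusion on $B_\varepsilon(x_0)$ for $\varepsilon$ \emph{arbitrarily close to} $r$, not merely for small $\varepsilon$; this is legitimate because $\|\cdot\|\le\kappa_p(\cdot)$ forces the radius of uniform convergence of the Taylor series at $x_0$ to be at least $r_p(f,x_0)$, which is precisely what makes Remark~\ref{m_p de funcion} valid on the whole range $\varepsilon<r$.
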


\begin{proof}
Without loss of generality, we can assume that $x_0=0$. For
$r=r_p(f,0)$, take $x \in E, \|x\|<r$.  By 
\cite[Proposition 1, p.26]{NACH}, there exists 
$\varepsilon >0$ such that $f(y)=\sum_{m=1}^{\infty}
P_mf(0)(y)$ converges uniformly  for all $y \in
B_{\varepsilon}(x)$. We also may assume that
$\|x\|+\varepsilon<r$. 

As in Proposition~\ref{p-comp radius}, we have that
$f(B_{\varepsilon}(x))\subset \{\sum_{m=1}^{\infty} x_m
\colon x_m \in P_mf(0)(B_{\varepsilon}(x))\}$.  Now, if we
prove that $\sum_{m=1}^{\infty}
\m_p(P_mf(0)(B_{\varepsilon}(x))) < \infty$, the result will
follow from Lemma~\ref{summing p-comp}. Indeed, 
$$
\begin{array}{r l}
\displaystyle \sum_{m=1}^{\infty}
\m_p(P_mf(0)(B_{\varepsilon}(x)))=& \displaystyle
\sum_{m=1}^{\infty}
(\|x\|+\varepsilon)^m\m_p\Big(P_mf(0)(\textstyle{\frac{1}{
\|x\| + \varepsilon}} B_{\varepsilon}(x))\Big)\\
                             \leq& \displaystyle
\sum_{m=1}^{\infty}
(\|x\|+\varepsilon)^m\m_p\big(P_mf(0)(B_E)\big)\\
                             =& \displaystyle
\sum_{m=1}^{\infty} \Big((\|x\|+\varepsilon)
\kappa_p(P_mf(0))^{1/m}\Big)^m.
\end{array}
$$
Since $(\|x\|+\varepsilon) r^{-1} <1$, the last series is convergent
and the claim is proved.
\end{proof}

We recently learnt that R. Aron and P. Rueda defined, in the context of 
 ideals of holomorphic functions \cite{AR2}, a radius of $\mathcal I$-boundedness which for $p$-compact holomorphic functions coincides with Definition~\ref{def-p-radius}.  With  the radius of $\mathcal I$-boundedness they obtained  a partial version of Proposition~\ref{p-comp radius}.

Thanks to the Josefson-Nissenzweig theorem  we  have, for
any Banach spaces $E$ and $F$, a $p$-compact holomorphic
mapping, $f\in \mathcal H_{K_p}(E; F)$, whose   $p$-compact
radius of convergence at the origin is finite. It is enough
to consider a sequence $(x'_m)_m \subset E'$ with
$\|x'_m\|=1 \ \forall m \in \N$, and $(x'_m)_m$ point-wise
convergent to 0. Then, $f(x)=\sum_{m=1}^{\infty} x'_m(x)^m$
belongs to $\mathcal H(E)$,  is $1$-compact (hence,
$p$-compact for $p>1$) and $r_p(f, 0)=1$ since $\kappa_p
((x'_m)^m)=\|x'_m\|=1$. The example can be modified to
obtain a vector valued holomorphic function with similar
properties.

There are two main questions related to $p$-compact
holomorphic functions which where stated as  Problem~5.1 and
Problem~5.2 by Aron, Maestre and Rueda \cite{AMR}. Both
arise from properties that compact  holomorphic functions
satisfy. Recall that we may consider compact sets as
$\infty$-compact sets and compact mappings as
$\infty$-compact functions, where $\kappa_\infty(P)=\|P\|$,
for any compact $m$-homogeneous polynomial $P$. Let us
consider $f\in \mathcal H(E;F)$, by \cite[Proposition
3.4]{AS} it is known that if $f$ is compact at one point,
say at the origin, then $f$ is compact at $x$ for all $x\in
E$. Also, if $\sum_{m=0}^\infty P_mf(0)$ is the Taylor
series expansion of $f$ at $0$, and for each $m$ the
homogeneous polynomial $P_mf(0)\colon E\to F$ is compact,
then $f$ is compact. With Example~\ref{Pn p-comp f no} we
show that this later result is no longer true for $1\leq p
<\infty$. Note that $\limsup \|P_m\|^{1/m} <\infty$ is
fulfilled by the Cauchy's inequalities whenever $f$ is
compact. 
Example~\ref{Pn p-comp f no} also shows that, in
Proposition~\ref{p-comp radius}, the hypothesis
$\displaystyle \limsup \kappa_p(P_mf(x_0))^{1/m} < \infty$
cannot be ruled out. For our purposes, we adapt
\cite[Example 10]{DIN}.

\begin{example}\label{Pn p-comp f no}
For every $1\leq p < \infty$, there exists a holomorphic
function $f \in \mathcal H(\ell_1;\ell_p)$ such that for all
$m \in \N$, $P_mf(0)$ are $p$-compact, but $f$ is not
$p$-compact at $0$. 

Furthermore, every polynomial $P_mf(y)$ in the Taylor series
expansion of $f$ at any $y \in \ell_1$
is $1$-compact, and therefore $p$-compact for all $1\leq p
<\infty$, but $f$ is not $p$-compact at any $y$.
\end{example}
 
\begin{proof} Consider the partition of the natural numbers
given by $\{\sigma_m\}_m$, where each $\sigma_m$ is a finite
set of $m!$ consecutive elements determined as follows:
$$
\sigma_1=\{1\};\quad \sigma_2=\{\ \underbrace{2,3}_{2!}\
\};\quad \sigma_3=\{\ \underbrace{4,5,6,7,8,9}_{3!}\
\};\quad \sigma_4=\{\underbrace{\ldots}_{4!}\};\ \cdots
$$

Let $(e_j)_j$ be the canonical basis of $\ell_p$ and denote by
$(e'_j)_j$ the sequence of coordinate functionals on
$\ell_1$. Fixed $m\geq 1$,  consider the polynomial $P_m \in
\mathcal P(^m\ell_1;\ell_p)$, defined by
$$P_m(x)=(\frac{m^{m/2}}{m!})^{1/p} \sum_{j \in \sigma_m}
e'_j(x)^m e_j.$$ 
Then
$$
\|P_m\|= (\frac{m^{m/2}}{m!})^{1/p} \sup_{x \in
B_{\ell_1}}\|\sum_{j \in \sigma_m} e'_j(x)^m
e_j\|_{\ell_p}\le (\frac{m^{m/2}}{m!})^{1/p} \sup_{x \in
B_{\ell_1}} \|x\|_1^{1/p} = (\frac{m^{m/2}}{m!})^{1/p}.$$ 

First, note that $P_m$ is $p$-compact since it is of finite
rank. Now, as $\limsup \|P_m\|^{1/m}\leq \lim
(\frac{m^{1/2}}{m!^{1/m}})^{1/p}=0$, we may define $f$ as
the series  $\sum_{m=1}^{\infty}P_m$, obtaining that $f \in
\mathcal H(\ell_1;\ell_p)$.

In order to show that $f$ fails to be $p$-compact at $0$, by
Proposition~\ref{p-comp radius}, it is enough to prove that
$\limsup \kappa_p(P_m)^{1/m}=\infty$.
Fix $m \in \N$ and take $(x_n)_{n} \in
\ell_p(\ell_p)$, such that $P_m(B_{\ell_1}) \subset
\pco\{x_n\}$. Each $x_n$ may be written by
$x_n=\sum_{k=1}^{\infty} x^n_k e_k$. For each $j \in
\sigma_m$, there is a sequence $(\alpha^j_n)_n \in
B_{\ell_q}$ such that 

$$
P_m(e_j)= (\frac{m^{m/2}}{m!})^{1/p}
e_j=\sum_{n=1}^{\infty}\alpha^j_nx_n =
\sum_{n=1}^{\infty}\sum_{k=1}^{\infty} \alpha^j_n x^n_k
e_k=\sum_{k=1}^{\infty}(\sum_{n=1}^{\infty} \alpha^j_n
x^n_k) e_k.
$$

Therefore, we have that
$(\frac{m^{m/2}}{m!})^{1/p}=\sum_{n=1}^{\infty} \alpha^j_n
x^n_j$, for each $j \in \sigma_m$.
Then
$$
\begin{array}{r l}
\displaystyle m^{m/2}=\sum_{j \in \sigma_m}
\big|(\frac{m^{m/2}}{m!})^{1/p}\big|^p=&\displaystyle
\sum_{j \in \sigma_m}\big|\sum_{n=1}^{\infty} \alpha^j_n
x^n_j\big|^p\\
\leq & \displaystyle \sum_{j \in
\sigma_m}(\sum_{n=1}^{\infty} |\alpha^j_n x^n_j|)^p\\
\leq & \displaystyle \sum_{j \in
\sigma_m}(\sum_{n=1}^{\infty} |\alpha^j_n|^q)^{p/q} 
\sum_{n=1}^{\infty}|x^n_j|^p\\
\leq & \displaystyle \sum_{j \in \sigma_m}
\sum_{n=1}^{\infty}|x^n_j|^p\leq
\|(x_n)_n\|^p_{\ell_p(\ell_p)}.
\end{array}
$$

We have shown that for any sequence $(x_n)_n\in
\ell_p(\ell_p)$ such that $P_m(B_{\ell_1})\subset
\pco\{x_n\}$, the inequality
$\|(x_n)_n\|_{\ell_p(\ell_p)} \geq m^{m/2p}$ holds. Then, 
$\kappa_p(P_m)\geq m^{m/2p}$ for all $m \in \N$. Hence, we
conclude that
$\limsup \kappa_p(P_m)^{1/m}=\infty$ and, by
Proposition~\ref{p-comp radius}, $f$ cannot be $p$-compact
at $0$, which proves the first statement of the example. 

To show the second assertion, take any nonzero element $y
\in \ell_1$ and fix $m_0 \in \N$. For all $x \in B_{\ell_1}$,
$$
\begin{array}{r l}
\displaystyle P_{m_0}f(y)(x)=& \displaystyle
\sum_{m=m_0}^{\infty} \genfrac{(}{)}{0pt}{}{m}{m_0}
\overset\vee P_m(y^{m-m_0},x^{m_0})\\
                           =&\displaystyle 
\sum_{m=m_0}^{\infty} \genfrac{(}{)}{0pt}{}{m}{m_0}
\left(\frac{m^{m/2}}{m!}\right)^{1/p} \sum_{j \in
\sigma_m}e'_j(y)^{m-m_0}e'_j(x)^{m_0}e_j.
\end{array}
$$

We claim that the sequence
$\left(\genfrac{(}{)}{0pt}{}{m}{m_0}
\left(\frac{m^{m/2}}{m!}\right)^{1/p} e'_j(y)^{m-m_0}
e_j\right)_{\underset {m> m_0}{j \in \sigma_m}}$ belongs to $\ell_1(\ell_p)$. In fact,
$$
\sum_{m> m_0} \genfrac{(}{)}{0pt}{}{m}{m_0}
\left(\frac{m^{m/2}}{m!}\right)^{1/p}\sum_{j \in
\sigma_m}|e'_j(y)|^{m-m_0}\leq \sum_{m> m_0}
\genfrac{(}{)}{0pt}{}{m}{m_0}
\left(\frac{m^{m/2}}{m!}\right)^{1/p}\|y\|_1^{m-m_0}<\infty.
                   $$

Then, since $(e'_j(x)^m)_{\underset {m\geq m_0}{j \in
\sigma_m}}$ belongs to $B_{c_0}$, the set
$P_{m_0}f(y)(B_{\ell_1})$ is included in the $1$-convex hull
of
$\left\{\genfrac{(}{)}{0pt}{}{m}{m_0}
\left(\frac{m^{m/2}}{m!}\right)^{1/p} e'_j(y)^{m-m_0} e_j
\colon m\geq m_0,\ j \in \sigma_m \right\}$, which proves
that $P_{m_0}f(y)$ is 1-compact and, therefore, $p$-compact
for every $1\le p$, for any $m_0$. 

To show that $f$ is not $p$-compact at $y$, note that fixed
$m$, it is enough to choose $j \in \sigma_{m}$, to obtain
that
$P_{m}f(y)(e_{j})=\left(\frac{m^{m/2}}{m!}\right)^{1/p}e_{j}
$. Now, we can proceed as in the first part of the example
to show that $\limsup \kappa_p(P_mf(y))^{1/m}=\infty$. And,
again by Proposition~\ref{p-comp radius}, we have that $f$
cannot be $p$-compact at $y$.
\end{proof}

The following example gives a negative answer to 
\cite[Problem~5.1]{AMR}. We show an entire function which is
$p$-compact at $0$, but this property does not extend beyond
the ball $B_{r_p(f,0)}(0)$. Example~\ref{f p-com en 0 no
p-comp} proves, in addition, that  
Proposition~\ref{convergencia_radio} cannot be improved.  We
base our construction in \cite[Example 11]{DIN}.

\begin{example}\label{f p-com en 0 no p-comp}
For every $1\leq p < \infty$, there exists a holomorphic
function $f \in \mathcal H(\ell_1;\ell_p)$ such that $f$ is
$p$-compact at $0$, with $\limsup
 \kappa_p(P_mf(0))^{1/m}=1$, but $f$ is not $p$-compact at
$e_1$.
\end{example}

\begin{proof} Consider  $\{\sigma_m\}_m$, the  partition of
the natural numbers, as in Example~\ref{Pn p-comp f no}. Let
$(e_j)_j$ be the canonical basis of $\ell_p$ and denote
$(e'_j)_j$ the sequence of coordinate functionals on
$\ell_1$. 

Fixed $m\geq 2$, define $P_m \in \mathcal
P(^m\ell_1;\ell_p)$, the $m$-homogeneous polynomial
$$
P_m(x)=(\frac{1}{m!})^{1/p}e'_1(x)^{m-2}\sum_{j \in
\sigma_m}e'_j(x)^2e_j.
$$
Then
$$
\begin{array}{r l}
\displaystyle \|P_m\|=&\displaystyle (\frac{1}{m!})^{1/p}
\sup_{x \in B_{\ell_1}}(\sum_{j \in
\sigma_m}|e'_1(x)^{m-2}e'_j(x)^2|^p)^{1/p}\\
       \leq& \displaystyle (\frac{1}{m!})^{1/p}\sup_{x \in
B_{\ell_1}} (\sum_{j \in \sigma_m}|e'_j(x)|^{2p})^{1/p}\leq
(\frac{1}{m!})^{1/p}.\\
\end{array}
$$

Since $\lim \|P_m\|^{1/m} \leq \lim
(\frac{1}{m!})^{1/pm}=0$, we may define $f$ as
$f(x)=\sum_{m\geq 2} P_m(x)$, which belongs to $\mathcal
H(\ell_1; \ell_p)$ and $\sum_{m\geq 2} P_m$ is its Taylor
series expansion at $0$.

Note that each $P_m$ is $p$-compact, as it is of finite
rank, for all $m\geq2$. Moreover, when computing $\|P_m\|$,
we showed that
$\alpha(x)=\big(e'_1(x)^{m-2}e'_j(x)^2\big)_j\in B_{\ell_q}$
for all $x\in B_{\ell_1}$. Then $P_m(B_{\ell_1})\subset
(\frac{1}{m!})^{1/p} \pco \{e_j\colon j\in \sigma_m\}$ and
since $\|(e_j)_{j\in \sigma_m}\|_{\ell_p(\ell_p)}=(\sum_{j
\in \sigma_m} 1)^{1/p}=(m!)^{1/p}$, we have that
$\kappa_p(P_m)\leq  (\frac{1}{m!})^{1/p}(m!)^{1/p}=1$. Then,
$\limsup\kappa_p(P_m)^{1/m}\leq 1$ and, by
Proposition~\ref{p-comp radius}, we have that  $f$ is
$p$-compact at $0$.

To show that $r_p(f,0)=1$, fix $m\ge 2$ and $\varepsilon >0$.
Take $x_j \in B_{\ell_1}$ such that $e'_1(x_j)=1-\varepsilon$,
$e'_j(x_j)=\varepsilon$ and $e'_k(x_j)=0$ for $j \in \sigma_m$
and $k\neq j$. 

Now, fix any sequence $(y_n)_n \in \ell_p(\ell_p)$ such that
$P_m(B_{\ell_1}) \subset \pco\{y_n\}$ and write
$y_n=\sum_{k=1}^{\infty}y_k^ne_k$.

Then, for each $j \in \sigma_m$ there exists
$(\alpha_n^j)_{n} \in B_{\ell_q}$ so that 
$$
P_m(x_j)=(\frac{1}{m!})^{1/p}(1-\varepsilon)^{m-2}\varepsilon^2
e_j=\sum_{n=1}^{\infty}\alpha_n^jy_n.
$$ 

Thus, for each $j \in \sigma_m$, the equality 
$(\frac{1}{m!})^{1/p}(1-\varepsilon)^{m-2}\varepsilon^2=\sum_{n=1}
^{\infty} \alpha_n^jy^n_j$ holds. 

In consequence
$$
\begin{array}{rl}
((1-\varepsilon)^{m-2}\varepsilon^2)^p&\displaystyle=\sum_{j \in
\sigma_m} \frac{1}{m!}((1-\varepsilon)^{m-2}\varepsilon^2)^p\\
                                &\displaystyle=\sum_{j \in
\sigma_m}
|(\frac{1}{m!})^{1/p}(1-\varepsilon)^{m-2}\varepsilon^2|^p\\
                                &\displaystyle=\sum_{j \in
\sigma_m} |\sum_{n=1}^{\infty}\alpha_n^jy^n_j|^p\\
                                &\displaystyle\leq \sum_{j
\in \sigma_m} (\sum_{n=1}^{\infty}|\alpha_n^jy^n_j|)^p\\
                                &\displaystyle\leq \sum_{j
\in \sigma_m} \sum_{n=1}^{\infty} |y^n_j|^p \leq
\|(y_n)_n\|^p_{\ell_p(\ell_p)}.
\end{array}
$$
Finally, we get that $\kappa_p(P_m)\geq
(1-\varepsilon)^{m-2}\varepsilon^2$ which implies that
$\limsup \kappa_p(P_m)^{1/m}\geq1-\varepsilon$. Since
$\varepsilon>0$ was arbitrary, we obtain that $r_p(f,0)=1$.

Now, we want to prove that $f$ is not $p$-compact at $e_1$.
By Proposition~\ref{p-comp radius}, it is enough to 
show, for instance, that the $2$-homogeneous polynomial
$P_2f(e_1)\colon \ell_1\to\ell_p$ is not $p$-compact. We
have 
\begin{equation}\label{ecu}
P_2f(e_1)(x)=\sum_{m=2}^{\infty} 
\genfrac{(}{)}{0pt}{}{m}{2} \overset\vee
{P_m}(e_1^{m-2},x^2) 
\end{equation}
where $\overset\vee{P_m}$ is the symmetric $m$-linear
mapping associated to $P_m$. 

By the definition of $P_m$ we easily obtain a multilinear
mapping $A_m \in\mathcal L(^m\ell_1;\ell_p)$ satisfying 
$P_m(x)=A_m(x,\ldots,x)$, defined by
$$
A_m(x_1,\ldots,x_m)=(\frac{1}{m!})^{1/p}e'_1(x_1)\cdots
e'_1(x_{m-2})\sum_{j \in \sigma_m}e'_j(x_{m-1})e'_j(x_m)e_j.
$$
Let $\mathcal S_m$ be the symmetric group on
$\{1,\ldots,m\}$ and denote, for each $\xi\in\mathcal S_m$,
the multilinear mapping $A_m^{\xi}$ given by
$A_m^{\xi}(x_1,\ldots, x_m)=A_m(x_{\xi(1)},\ldots,
x_{\xi(m)})$. Then we have
$$
\overset \vee{P_m}(e_1^{m-2},x^2)=\frac{1}{m!}\sum_{\xi \in
S_m} A_m^{\xi}(e_1^{m-2},x^2).
$$
Since
$A_m(x_1,\ldots,x_{m-2},e_1,x_{m-1})=A_m(x_1,\ldots,x_{m-1},
e_1)=0$, for all $x_1,\ldots,x_{m-1} \in \ell_1$, and
$A_m(e_1^{m-2},x^2)=\left(\frac{1}{m!}\right)^{1/p} \sum_{j
\in \sigma_m} e'_j(x)^2 e_j$, we obtain
\begin{equation}
\overset \vee{P_m}(e_1^{m-2},x^2)=\frac{1}{m!} 2 (m-2)!
(\frac{1}{m!})^{1/p} \sum_{j \in \sigma_m} e'_j(x)^2 e_j.
\label{ecu2}
\end{equation}
Combining \eqref{ecu} and \eqref{ecu2} we get that
$$
P_2f(e_1)(x)=\sum_{m\geq 2} (\frac{1}{m!})^{1/p}\sum_{j \in
\sigma_m} e'_j(x)^2e_j.
$$
Suppose that $P_2f(e_1)$ is $p$-compact. Hence, there exists
a sequence $(y_n)_n \in \ell_p(\ell_p)$,
$y_n=\sum_{k=1}^{\infty}y^n_k e_k$ such that
$P_2f(e_1)(B_{\ell_1}) \subset \pco\{y_n\}$. For each $j
\in \sigma_m$, there exists $(\alpha_n^j)_{n} \in B_{\ell_q}$
such that $P_2f(e_1) (e_j)=(\frac{1}{m!})^{1/p}
e_j=\sum_{n=1}^{\infty} \alpha_n^j y_n$. As in the
Example~\ref{Pn p-comp f no}, we conclude that
$(\frac{1}{m!})^{1/p}=\sum_{n=1}^{\infty} \alpha_n^j y^n_j$,
if $j \in \sigma_m$.
 
Hence
$$
\begin{array}{r l}
\displaystyle \sum_{m\geq 2} \sum_{j \in \sigma_m}
((\frac{1}{m!})^{1/p})^p=& \displaystyle \sum_{m\geq 2}
\sum_{j \in \sigma_m} |\sum_{n=1}^{\infty} \alpha_n^j
y^n_j|^p \\
\leq &  \displaystyle \sum_{m\geq 2} \sum_{j \in \sigma_m}
(\sum_{n=1}^{\infty}
|\alpha_n^j|^q)^{p/q}\sum_{n=1}^{\infty} |y_n^j|^p\\
\leq & \displaystyle \sum_{m\geq 2} \sum_{j \in
\sigma_m}\sum_{n=1}^{\infty} |y_n^j|^p\\
\leq & \displaystyle \|(y_n)_n\|_{\ell_p(\ell_p)}^p<\infty,
\end{array}
$$
which is a contradiction since $\sum_{m\geq 2} \sum_{j \in
\sigma_m} ((\frac{1}{m!})^{1/p})^p$ is not convergent.
Therefore, $f$ cannot be $p$-compact at $e_1$, and the
result is proved. 
\end{proof}
\smallskip

\section{The $p$-approximation property and $p$-compact mappings}

The concept of $p$-compact sets leads naturally to that of
$p$-approximation property. A Banach space $E$ has the
$p$-approximation property if the identity can be
uniformly approximated by finite rank operators on
$p$-compact sets. Since $p$-compact sets are compact, every
space with the approximation property has the
$p$-approximation property. Then, this property can be seen
as a way to weaken the classical  approximation property.

The $p$-approximation property has been studied in \cite{CK,
DOPS} related with $p$-compact linear operators and  in
\cite{AMR} related with non linear mappings. The relation
between the approximation property and compact holomorphic
mappings was first addressed  in \cite{AS}.  Here, we are
concern with the study of the $p$-approximation property and
its relation with $p$-compact polynomials and holomorphic
functions in the spirit of \cite{AMR} and \cite{AS}. 

We start characterizing the notion of a homogeneous polynomial $P$ being $p$-compact
in terms of different conditions of continuity satisfied by
$P'$ the transpose of $P$. The first proposition gives an
answer to \cite[Problem 5.8]{AMR} and should be compared
with \cite[Proposition 3.2]{AS}. 

Before going on, some words are needed on the topologies which we will
use. We denote by  $\mathcal P_c(^mE)$ the
space $\mathcal P(^mE)$ considered with the uniform
convergence on compact sets of $E$, if $m=1$ we simply write
$E'_c$.  When compact sets are replaced by $p$-compact sets
we use the notation $\mathcal P_{cp}(^mE)$ and $E'_{cp}$. By
the Ascoli theorem, any set $L \subset \mathcal P_c(^mE)$ is relatively
compact if and only if $\sup_{P \in L} \|P\|$ is finite.
Also, if  $L\subset \mathcal P_{cp}(^mE)$ is relatively compact we
have that $L$ is point-wise bounded and then, by the
Principle of uniform boundedness, $L$ is relatively compact
in $\mathcal P_c(^mE)$.  Now we have:

\begin{proposition}\label{P'}
Let $E$ and $F$ be Banach spaces, $1\leq p <\infty$, and $P
\in \mathcal P(^mE;F)$. The following statements are
equivalent.
\begin{enumerate}
\item[(i)] $P \in \mathcal P_{K_p}(E;F)$.
\item[(ii)] $P'\colon F'_{cp}\rightarrow \mathcal P(^mE)$ is
continuous.
\item[(iii)] $P'\colon F'_{cp}\rightarrow \mathcal P_c(^mE)$
is compact.
\item[(iv)] $P'\colon F'_{cp}\rightarrow \mathcal
P_{cq}(^mE)$ is compact for any $q$, $1\leq q<\infty$.
\item[(v)] $P'\colon F'_{cp}\rightarrow \mathcal
P_{cq}(^mE)$ is compact for some $q$, $1\leq q<\infty$. 
\end{enumerate}
\end{proposition}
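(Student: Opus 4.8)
The plan is to establish the single cyclic chain $(i)\Rightarrow(ii)\Rightarrow(iii)\Rightarrow(iv)\Rightarrow(v)\Rightarrow(i)$, organizing everything around the computational backbone
$$
\|P'(y')\|_{\mathcal P(^mE)}=\sup_{x\in B_E}|y'(P(x))|=\sup_{z\in P(B_E)}|y'(z)|,
$$
valid for every $y'\in F'$, which identifies the norm of $P'(y')$ with the seminorm ``uniform convergence on $P(B_E)$'' evaluated at $y'$. Since $P'$ is linear, continuity of $P'\colon F'_{cp}\to\mathcal P(^mE)$ at the origin is then equivalent to the existence of a relatively $p$-compact set $K\subset F$ (which I may take absolutely convex and closed, using the Section~1 facts on $\Gamma(K)$) and a constant $C>0$ with $\sup_{P(B_E)}|y'|\le C\sup_K|y'|$ for all $y'\in F'$. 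This reformulation of $(ii)$ is what will interface with the notion of $p$-compactness through a polar argument.

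The easy part of the cycle runs as follows. For $(i)\Rightarrow(ii)$ I would simply take $K=\overline{\Gamma(P(B_E))}$, relatively $p$-compact by the remarks in Section~1, so that $P(B_E)\subset K$ and the domination above holds with $C=1$. For $(ii)\Rightarrow(iii)$ I would invoke the Ascoli remark preceding the statement: continuity into the normed space $\mathcal P(^mE)$ forces some $\tau_{cp}$-neighborhood $U=\{y'\colon\sup_K|y'|\le1\}$ to be mapped into a norm-bounded set, and norm-bounded subsets of $\mathcal P_c(^mE)$ are relatively compact; hence $P'(U)$ is relatively compact in $\mathcal P_c(^mE)$, i.e.\ $P'$ is compact there. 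Then $(iii)\Rightarrow(iv)$ is immediate since each $q$-compact set is compact, so the identity $\mathcal P_c(^mE)\to\mathcal P_{cq}(^mE)$ is continuous and composition preserves compactness, for every $q$; and $(iv)\Rightarrow(v)$ is trivial.

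The substantive step is $(v)\Rightarrow(i)$, which I would split in two. First I recover $(ii)$: if $P'(U)$ is relatively compact in $\mathcal P_{cq}(^mE)$ for a neighborhood $U$ coming from a $p$-compact $K$, then $P'(U)$ is pointwise bounded (point evaluations are $\tau_{cq}$-continuous, singletons being $q$-compact), hence norm-bounded by the uniform boundedness principle exactly as in the remark stated for $\mathcal P_{cp}$; homogeneity upgrades this to $\|P'(y')\|\le C\sup_K|y'|$ for all $y'$, which is $(ii)$. The genuinely non-routine piece is $(ii)\Rightarrow(i)$: from $\sup_{P(B_E)}|y'|\le C\sup_K|y'|$ one passes to polars in the duality $\langle F,F'\rangle$, getting $K^{\circ}\subset\bigl(C^{-1}P(B_E)\bigr)^{\circ}$ and hence, by the bipolar theorem, $P(B_E)\subset C\,K^{\circ\circ}$; as subsets of relatively $p$-compact sets are relatively $p$-compact, this yields $P\in\mathcal P_{K_p}(^mE;F)$.

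The main obstacle is controlling this bipolar passage: one must verify that $K^{\circ\circ}$, a priori only the closed absolutely convex hull of $K$ in the weak topology $\sigma(F,F')$, is still relatively $p$-compact. This is precisely where the Section~1 identities $\m_p(K)=\m_p(\overline K)=\m_p(\Gamma(K))$ enter, together with the fact that a closed convex set has the same weak and norm closure, so that $K^{\circ\circ}=\overline{\Gamma(K)}$ is $p$-compact. As an alternative one could bypass the direct polar computation by reducing to the linear case through the identification $P'=L_P'$ with $\mathcal P(^mE)=(\bigotimes^m_{\pi_s}E)'$ and Lemma~\ref{pcomp-lin}, proving the analogous statement for the operator $L_P$; but the direct argument above is self-contained and keeps the $\kappa_p$-bookkeeping transparent.
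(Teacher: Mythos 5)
Your proof is correct and follows essentially the same route as the paper's: the same cycle $(i)\Rightarrow(ii)\Rightarrow(iii)\Rightarrow(iv)\Rightarrow(v)\Rightarrow(i)$, using polars of $p$-compact sets as neighborhoods in $F'_{cp}$, the Ascoli/uniform-boundedness remarks for compactness in $\mathcal P_c(^mE)$ and for recovering norm-boundedness from $(v)$, and a bipolar argument to conclude $(i)$. The only cosmetic difference is that you dilate the target set (getting $P(B_E)\subset C\,K^{\circ\circ}=C\,\overline{\Gamma(K)}$, justified by $\m_p(K)=\m_p(\overline K)=\m_p(\Gamma(K))$), whereas the paper uses the $m$-homogeneity of $P$ to shrink the ball $B_E$ by the factor $c^{-1/m}$; these are the same polar computation.
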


\begin{proof} Suppose (i) holds, then $\overline{P(B_E)}=K$
is $p$-compact and its polar set $K^{\circ}$ is a
neighborhood in $F'_{cp}$. 
For $y' \in K^{\circ}$ we have that $\|P'(y')\|=\sup_{x \in
B_E}|y'(Px)|\leq 1$, and  $P'\colon F'_{cp}\rightarrow
\mathcal P(^mE)$ is continuous. 

Now suppose (ii) holds, then there exists a $p$-compact set
$K\subset F$ such that $P'(K^{\circ})$ is equicontinuous in
$\mathcal P(^mE)$. By the Ascoli theorem, $P'(K^{\circ})$ 
is relatively compact in $\mathcal P_c(^mE)$ and $P'\colon
F'_{cp}\rightarrow \mathcal P_c(^mE)$ is compact.
 
The continuity of the identity map $\mathcal
P_c(^mE)\hookrightarrow \mathcal P_{cq}(^mE)$ gives that
(iii) implies (iv),  for all $1\leq q < \infty$. Obviously,
(iv) implies (v). To complete the proof, suppose (v) holds.
Then, there exist an absolutely convex $p$-compact set
$K\subset F$ and a compact set $L \subset \mathcal
P_{cq}(^mE)$ such that $P'(K^{\circ})\subset L$ and
therefore, there exists $c>0$ such that $\sup_{y'\in K^\circ
}\|P'(y')\|\leq c$. Note that for any $x \in c^{-\frac 1n}
B_E$ and $y' \in K^\circ$ we have that
$|P'(y')(x)|=|y'(Px)|\leq 1$. Then $P(x)\in K$, for all $x
\in c^{-\frac 1n} B_E$ and $P$ is $p$-compact.
\end{proof}

Now, we characterize the $p$-approximation property on a
Banach space in terms of the $p$-compact homogeneous
polynomials with 
values on it. In order to do so we appeal to the notion of
the $\epsilon$-product introduced by Schwartz  \cite{Schw}.
Recall that for $E$ and $F$ two locally convex spaces,
$F\epsilon E$ is defined as the space of all linear
continuous operators from $E'_c$ to $F$, endowed with the
topology of uniform convergence on all equicontinuous sets
of $E'$. The space $F\epsilon E$ is also denoted by
$\mathcal L_\epsilon(E'_c;F)$.  In \cite[Proposition
3.3]{AS} its shown, for all Banach spaces $E$ and $F$, that $(\mathcal P(^mF),\|.\|)\epsilon
E=\mathcal L_{\epsilon}(E'_{c};(\mathcal P(^m F),\|.\|))=(\mathcal P_K(^mF;E),\|.\|)$, 
where the isomorphism is
given by the transposition $P\leftrightarrow P'$. As a
consequence, it is proved that $\mathcal P(^mF)$ has the
approximation property if and only if $\mathcal
P(^mF)\otimes E$ is $\|.\|$-dense in $\mathcal P_{K}(^mF;E)$
for all Banach spaces $E$ and all $m \in \N$. We have the
following result.

\begin{proposition}\label{polinomio epsilon}
Let $E$ and $F$ be Banach spaces. Then $(\mathcal
P_{K_p}(^mF;E),\|.\|)$ is isometrically
isomorphic to $\mathcal L_{\epsilon}(E'_{c_p};(\mathcal P(^mF),\|.\|))$.

As a consequence,  $E$ has the $p$-approximation property if
and only if $\mathcal P(^mF)\otimes E$ is $\|.\|$-dense in
$\mathcal P_{K_p}(^mF;E)$ for all Banach spaces $F$ and all
$m \in \N$.
\end{proposition}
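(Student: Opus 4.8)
The plan is to establish the isometric isomorphism first and then deduce the approximation statement, following the template of \cite[Proposition~3.3]{AS} but feeding in Proposition~\ref{P'} to handle the $p$-compact case. I would define $\Phi\colon \mathcal P_{K_p}(^mF;E)\to \mathcal L_\epsilon(E'_{c_p};(\mathcal P(^mF),\|.\|))$ by transposition, $\Phi(P)=P'$, where $P'(y')=y'\circ P\in\mathcal P(^mF)$ for $y'\in E'$. Applying Proposition~\ref{P'} with the roles of $E$ and $F$ interchanged shows that $P$ is $p$-compact if and only if $P'\colon E'_{c_p}\to\mathcal P(^mF)$ is continuous, so $\Phi$ is well defined with values in the $\epsilon$-product. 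Linearity and injectivity are immediate. For surjectivity I would use that $p$-compact sets are compact, so $c_p$ is coarser than $c$ and every $u\in\mathcal L_\epsilon(E'_{c_p};\mathcal P(^mF))$ is a fortiori continuous on $E'_c$; by \cite[Proposition~3.3]{AS} such $u$ equals $P'$ for a compact polynomial $P\in\mathcal P_K(^mF;E)$, and Proposition~\ref{P'} then upgrades $P$ to a $p$-compact polynomial.

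The isometry is the one computational point, and it hinges on identifying the $\epsilon$-topology. Since $E$ is a Banach space, the equicontinuous subsets of $E'$ are exactly the norm-bounded ones, so uniform convergence on equicontinuous sets coincides with the operator-norm topology. Then, for $P\in\mathcal P_{K_p}(^mF;E)$, one computes $\|\Phi(P)\|=\sup_{y'\in B_{E'}}\|P'(y')\|=\sup_{y'\in B_{E'}}\sup_{x\in B_F}|y'(Px)|=\sup_{x\in B_F}\|Px\|=\|P\|$, which gives the isometry.

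For the consequence, the implication ``$E$ has the $p$-approximation property $\Rightarrow$ density'' is direct and needs no $\epsilon$-product: given $P\in\mathcal P_{K_p}(^mF;E)$ and $\ep>0$, the set $K=\overline{P(B_F)}$ is $p$-compact, so the $p$-approximation property yields a finite rank $T=\sum_i e'_i\otimes a_i\in E'\otimes E$ with $\sup_{x\in K}\|Tx-x\|<\ep$; then $T\circ P=\sum_i (e'_i\circ P)\otimes a_i$ lies in $\mathcal P(^mF)\otimes E$ and $\|T\circ P-P\|\le\sup_{z\in P(B_F)}\|Tz-z\|<\ep$.

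The reverse implication is where the Schwartz machinery enters, and it is the main obstacle. Transporting the density hypothesis through $\Phi$, the statement ``$\mathcal P(^mF)\otimes E$ is $\|.\|$-dense in $\mathcal P_{K_p}(^mF;E)$'' becomes ``$E\otimes\mathcal P(^mF)$ is dense in $\mathcal L_\epsilon(E'_{c_p};\mathcal P(^mF))$''; taking $m=1$, so that $\mathcal P(^1F)=F'$, this reads that $F'\otimes E$ is $\|.\|$-dense in $K_p(F;E)$ for every Banach $F$. The plan is then to invoke the density theorem of Schwartz for the $\epsilon$-product: since $(E'_{c_p})'=E$ (the $p$-compact subsets of $E$ are weakly compact and cover $E$, so $c_p$ is compatible with the pairing $\langle E',E\rangle$) and the relevant compactology is precisely that of the $p$-compact subsets of $E$, the density of $E\otimes G$ in $\mathcal L_\epsilon(E'_{c_p};G)$ for all Banach $G$ is equivalent to the approximation property of this dual pairing, which is exactly the $p$-approximation property of $E$. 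The delicate point, and the crux I expect to require the most care, is this last equivalence: one must verify that Schwartz's completeness and symmetry hypotheses apply in the $c_p$ setting and, concretely, convert a finite rank approximant of the canonical map $\theta_y\colon\ell_q\to E$ (chosen so that $\overline{\theta_y(B_{\ell_q})}\supseteq K$ for a prescribed $p$-compact $K$) into a finite rank operator acting \emph{on} $E$ that approximates the identity uniformly on $K$. This domain-side transfer is not available from the bare density of operators into $E$, and it is exactly what the symmetry of the $\epsilon$-product supplies.
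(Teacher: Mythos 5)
Your isometric-isomorphism argument is exactly the paper's: transposition, Proposition~\ref{P'} for well-definedness, continuity of the identity $E'_c\to E'_{cp}$ plus \cite[Proposition 3.3]{AS} for surjectivity; the norm computation you spell out is correct (the paper leaves it implicit). Your forward implication of the consequence is also correct, and is in fact more elementary than the paper's: composing $P$ with a finite-rank approximant of the identity on the $p$-compact set $\overline{P(B_F)}$ avoids the appeal to the density theorem of \cite{GaLaTur} for $G\otimes E$ in $\mathcal L_{\epsilon}(E'_{c_p};G)$, which is what the paper uses here.

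The genuine gap is in the converse. After reducing, as the paper does, to the statement that $F'\otimes E$ is $\|.\|$-dense in $\mathcal K_p(F;E)$ for every Banach space $F$ (the case $m=1$), one still has to conclude that the identity of $E$ is approximable by finite-rank operators uniformly on every $p$-compact set. This implication is precisely \cite[Theorem 2.1]{DOPS}, which the paper invokes at this point; it is a substantive theorem, not a formal consequence of the setup. Your proposed route -- ``Schwartz's density theorem and the symmetry of the $\epsilon$-product'' -- does not go through as stated: $\mathcal L_{\epsilon}(E'_{c_p};G)$ is not a Schwartz $\epsilon$-product of two locally convex spaces, because the $c_p$-topology is uniform convergence on a \emph{proper subfamily} of the compact sets of $E$, so the symmetry $G\epsilon E\cong E\epsilon G$ is not available off the shelf; making a usable substitute rigorous in the $c_p$ setting is exactly the content of the results in \cite{GaLaTur} and \cite{DOPS} that you would be re-proving. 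Moreover, the ``domain-side transfer'' you single out -- passing from a norm approximation of $\theta_y\colon \ell_q\to E$ by finite-rank operators to a finite-rank operator \emph{on} $E$ close to the identity on $K\subset \theta_y(B_{\ell_q})$ -- is the actual mathematical difficulty (an approximant $S$ produces vectors $S\alpha$ close to $x=\theta_y\alpha$, not an operator evaluated at $x$), and your text acknowledges it rather than resolves it. So the last step of your argument is a correctly identified but open hole; citing \cite[Theorem 2.1]{DOPS}, or supplying its proof, is what is needed to close it.
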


\begin{proof} Note that  [(i) implies (ii)] of
Proposition~\ref{P'}, says that the transposition  operator
maps a $p$-compact polynomial into a linear map in $\mathcal
L_{\epsilon}(E'_{c_p};\mathcal P(^mF),\|.\|))$. Now, take
$T$ an operator in $\mathcal L_{\epsilon}(E'_{c_p};\mathcal
P(^mF),\|.\|))$. Since the identity map $\iota\colon E'_c\to
E'_{cp}$ is continuous, $T$ belongs to $\mathcal
L_{\epsilon}(E'_{c};\mathcal P(^mF),\|.\|))$. By
\cite[Proposition 3.3]{AS}, we have that $T=P'$ for some
$P\in \mathcal P_K(^mF;E)$. In particular, $P'\colon
E'_{cp}\rightarrow \mathcal P(^mF)$ is continuous and by
[(ii) implies (i)] of Proposition~\ref{P'}, $P$ is
$p$-compact.  

For the second statement, if $E$ has the $p$-approximation
property, $G\otimes E$ is dense in $\mathcal
L_{\epsilon}(E'_{c_p};G)$, for every locally convex space
$G$, \cite{GaLaTur}. In particular we may consider
$G=(\mathcal P(^mF),\|.\|)$. Conversely, with $m=1$ we have
that $F'\otimes E$ is $\|.\|$-dense in $\mathcal K_p (F;E)$
for every  Banach space $F$. Now, an application of
\cite[Theorem 2.1]{DOPS} completes the proof.
\end{proof}

At the light of \cite[Proposition 3.3]{AS},  we expected to
obtain a result of the type {\it $\mathcal P(^mE)$ has the
$p$-approximation property if and only if $\mathcal
P(^mE)\otimes F$ is $\|.\|$-dense in $\mathcal
P_{K_p}(^mE;F)$ for all Banach spaces $F$ and all $m \in
\N$.} Unfortunately, our characterization is not as direct
as we wanted and requires the following notion.

\begin{definition}
Let  $E$ be a Banach space,  $\mathcal A$ an operator ideal and  $\alpha$ a norm on $\mathcal A$ . 
We say that $E$ has the $(\mathcal A,
\alpha)$-approximation property if $F'\otimes E$ is
$\alpha$-dense in $\mathcal A(F,E)$, for all Banach spaces
$F$.
\end{definition}

The relation between an ideal $\mathcal A$ with the ideal of
those operators whose transpose belongs to $\mathcal A$
leads us to work with the ideal of quasi $p$-nuclear
operators $\mathcal {QN}_p$.

\begin{proposition} Let $E$ be a Banach space and fix $m\in
\mathbb N$. Then,
\begin{enumerate}
\item[{\rm (a)}] $\mathcal P(^mE)\otimes F$ is $\|.\|$-dense
in $\mathcal P_{K_p}(^mE;F)$, for all Banach spaces $F$  if
and only if  $\mathcal P(^mE)$ has the
$(\mathcal {QN}_p,\|.\|)$-approximation property.
\item[{\rm (b)}] $\mathcal P(^mE)$ has the $p$-approximation
property if and only if $\mathcal P(^mE)\otimes F$ is
$\|.\|$-dense in $\{P \in \mathcal P(E;F) \colon L_p \in
\mathcal {QN}_p(\otimes ^{m}_{\pi_s}E;F)\}$, for all Banach spaces
$F$. 
\end{enumerate}
\end{proposition}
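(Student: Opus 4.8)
The plan is to transpose everything to the operator level and play off the duality between the ideals $\mathcal K_p$ and $\mathcal{QN}_p$. Write $X=\mathcal P(^mE)$ and $Z=\bigotimes^m_{\pi_s}E$, so that $X=Z'$ and, because $P'=L_P'$, the transpose of any $P\in\mathcal P(^mE;F)$ is the adjoint of the linearization $L_P\colon Z\to F$; thus $P'\colon F'\to X$ is weak$^*$-continuous. Since $\|P'\|=\sup_{\|x\|\le1,\,\|y'\|\le1}|y'(Px)|=\|P\|$, transposition is an isometry for the uniform norm. It sends $\mathcal P(^mE)\otimes F$ onto the weak$^*$-continuous finite rank operators $F\otimes X\subset\mathcal L(F';X)$, it identifies (by Corollary~\ref{transpose}) $\mathcal P_{K_p}(^mE;F)$ with the weak$^*$-continuous members of $\mathcal{QN}_p(F';X)$, and (by the operator duality of Section~1) it identifies $\{P\colon L_P\in\mathcal{QN}_p(Z;F)\}$ with the weak$^*$-continuous members of $\mathcal K_p(F';X)$. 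Dually, a finite rank operator $u=\sum_i\varphi_i\otimes y_i\colon Z\to F$ is the linearization of the finite rank polynomial $R=\sum_i\varphi_i\otimes y_i\in X\otimes F$, and $\|P-R\|=\|L_P-u\|$. So both equivalences become statements about finite rank approximation of $p$-compact (resp. quasi $p$-nuclear) operators on $Z$ and into $X$.

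I would first dispose of the implications ``density $\Rightarrow$ approximation property'', where a restriction argument works cleanly. Given any Banach space $G$ and $S\in\mathcal{QN}_p(G;X)$ (for (a); take $S\in\mathcal K_p(G;X)$ for (b)), define the $m$-homogeneous polynomial $P_S\colon E\to G'$ by $P_S(x)(g)=S(g)(x)$. Its transpose $P_S'\colon G''\to X$ coincides with $\pi\circ S''$, where $\pi\colon X''\to X$ is the canonical norm-one projection available because $X=Z'$ is a dual space; by the bidual stability of the ideals ($S''$ is again quasi $p$-nuclear, resp. $p$-compact, see \cite{DPS_adj, GaLaTur}) the operator $P_S'$ belongs to the same ideal, so Corollary~\ref{transpose} (resp. the operator duality) gives that $P_S$ lies in the relevant polynomial class. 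Applying the density hypothesis with $F=G'$ produces finite rank polynomials $R_n\in\mathcal P(^mE)\otimes G'$ with $\|P_S-R_n\|\to0$; transposing and then precomposing with the canonical embedding $J_G\colon G\to G''$ yields $R_n'J_G\in G'\otimes X$ with $\|S-R_n'J_G\|\le\|P_S'-R_n'\|=\|P_S-R_n\|\to0$. This is exactly finite rank density in $\mathcal{QN}_p(G;X)$ (resp. $\mathcal K_p(G;X)$), that is the $(\mathcal{QN}_p,\|.\|)$-approximation property of $X$ (resp., via \cite[Theorem~2.1]{DOPS}, the $p$-approximation property of $X$).

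For the implications ``approximation property $\Rightarrow$ density'' I would start from $P\in\mathcal P_{K_p}(^mE;F)$ (resp. $P$ with $L_P\in\mathcal{QN}_p(Z;F)$) and use the preceding dictionary to reduce the goal to: the operator $L_P\in\mathcal K_p(Z;F)$ (resp. $L_P\in\mathcal{QN}_p(Z;F)$) can be approximated in operator norm by finite rank operators $Z\to F$. The hypothesis, however, is an approximation property of $X=Z'$ phrased through operators \emph{into} $X$, whereas what is needed is finite rank approximation of operators \emph{out of} the predual $Z$, in the \emph{adjoint} ideal. Bridging this is where the work lies: one must show that the $(\mathcal{QN}_p,\|.\|)$-approximation property of the dual space $Z'$ forces finite rank density in $\mathcal K_p(Z;F)$ for every $F$ (and, for (b), that the $p$-approximation property of $Z'$ forces finite rank density in $\mathcal{QN}_p(Z;F)$).

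This last transfer is the main obstacle. The difficulty is genuinely one of handedness: restricting the domain of an operator (as in the converse direction) is norm-nonincreasing and preserves finite rank, which is why that direction is free, but there is no projection of the range $F''\to F$ to undo the bidual that appears when one approximates $P'$ by operators in $F''\otimes X$. The resolution must exploit that $X=\bigl(\bigotimes^m_{\pi_s}E\bigr)'$ is a dual space, so that its approximation property can be dualized against the predual through the duality between $\mathcal K_p$ and $\mathcal{QN}_p$ of Section~1; I would isolate this dualization as a separate lemma, in the spirit of Proposition~\ref{polinomio epsilon} and \cite{DOPS, DPS_adj, GaLaTur}, since it is the only point at which the abstract approximation property of $X$ must be converted into an approximation statement on its predual.
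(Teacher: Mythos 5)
Your translation to the operator level is accurate, and the direction you actually prove --- density implies the approximation property, in both (a) and (b) --- is correct and complete: the polynomial $P_S(x)(g)=S(g)(x)$ built from $S\in\mathcal{QN}_p(G;\mathcal P(^mE))$ (resp.\ $S\in\mathcal K_p(G;\mathcal P(^mE))$) does satisfy $P_S'=\pi\circ S''$ and $S=P_S'J_G$; bidual stability and the ideal property place $P_S$ in the right polynomial class via Corollary~\ref{transpose} and the $\mathcal K_p$--$\mathcal{QN}_p$ duality, and restriction along $J_G$ turns an approximation of $P_S$ by finite-rank polynomials into an approximation of $S$ by $G'\otimes \mathcal P(^mE)$. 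That half is a genuinely self-contained argument which the paper does not even write out.

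The gap is the pair of converse implications, which your proposal announces but never proves. For (a) you must show that the $(\mathcal{QN}_p,\|.\|)$-approximation property of $\mathcal P(^mE)=Z'$, with $Z=\bigotimes^m_{\pi_s}E$, forces $Z'\otimes F$ to be $\|.\|$-dense in $\mathcal K_p(Z;F)$ for every Banach space $F$ (and, for (b), the statement with $\mathcal K_p$ and $\mathcal{QN}_p$ interchanged). You reduce the problem to exactly this, call it ``the main obstacle,'' and say you ``would isolate this dualization as a separate lemma'' --- but no proof of that lemma is offered, and the obstruction you describe is real: given $T\in\mathcal K_p(Z;F)$, the hypothesis lets you approximate $T'\in\mathcal{QN}_p(F';Z')$ by operators in $F''\otimes Z'$, and transposing these produces finite-rank operators with values in $F''$, with no projection $F''\to F$ available to come back. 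Bridging this handedness problem is precisely the theorem the paper imports from \cite{GaLaTur}: a dual space $Z'$ has the $(\mathcal{QN}_p,\|.\|)$-approximation property if and only if $Z'\otimes F$ is $\|.\|$-dense in $\mathcal K_p(Z;F)$ for all $F$, together with the ideal-interchanged version; the paper's entire proof consists of that citation combined with Lemma~\ref{pcomp-lin}. Since you neither prove this transfer nor invoke a precise statement of it, what you have established is only one direction of each of the two equivalences.
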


\begin{proof} The space $\mathcal P(^mE)$, or equivalently 
$(\otimes ^{m}_{\pi_s}E)'$ , has the
$(\mathcal{QN}_p,\|.\|)$-approximation property if and only if 
$(\otimes ^{m}_{\pi_s}E)' \otimes F$ is $\|.\|$-dense in
$\mathcal K_p(\otimes ^{m}_{\pi_s}E;F)$ for all Banach
spaces $F$, see \cite{GaLaTur}.  In virtue of
Lemma~\ref{pcomp-lin}, it is equivalent to have that
$\mathcal P(^mE)\otimes F$ is $\|.\|$-dense in $\mathcal
P_{K_p}(^mE;F)$. Then, statement (a) is proved. Note that
(a) can be reformulated saying that $\mathcal P(^mE)$ has
the $(\mathcal{QN}_p,\|.\|)$-approximation property  if and only if 
$\mathcal P(^mE)\otimes F$ is $\|.\|$-dense in $\{P \in
\mathcal P(E;F) \colon L_p \in \mathcal K_p(\otimes
^{m}_{\pi_s}E;F)\}$, for all Banach spaces $F$. 

For the proof of (b), we use  that the $p$-approximation
property corresponds to the $(\mathcal
A,\|.\|)$-approximation property for the ideal  $\mathcal A=\mathcal K_p$, 
of $p$-compact operators. The result follows proceeding as
before if the ideal $\mathcal K_p$ and its dual ideal $\mathcal{QN}_p$
are interchanged. 
\end{proof}

Now, we change our study to that of $p$-compact holomorphic
mappings.  Aron and Schottenloher described the space of
compact holomorphic functions considered with $\tau_w$, the
Nachbin topology \cite{NACH}, via the $\epsilon$-product. Namely, they
show that $(\mathcal H_K(E;F),\tau_{\omega})= \mathcal
L_{\ep}(F'_c;\mathcal H(E),\tau_{\omega})$, where the
isomorphism is given by the transposition map $f\mapsto f'$
\cite[Theorem 4.1]{AS}. The authors use this equivalence to
obtain, in presence of the approximation property, results
on density similar to that of Proposition~\ref{polinomio
epsilon}. Recall that $f'\colon F'\to \mathcal H(E)$ denotes the linear operator  given by $f'(y')=y'\circ f$.
With the next proposition we try to clarify the
relationship between $p$-compact holomorphic mappings and
the $\epsilon$-product. The result obtained gives, somehow,
a partial answer to \cite[Problem 5.6]{AMR}. 
 
\begin{proposition}\label{p-PA-eps} Let  $E$ and $F$ be
Banach spaces. Then,
\begin{enumerate}
\item[\rm(a)] $(\mathcal H_{K_p}(E;F),\tau_{\omega})$ is
topologically isomorphic to a subspace of $\mathcal
L_{\epsilon}(F'_{cp};(\mathcal H(E),\tau_{\omega}))$.
\item[\rm(b)] $\mathcal L_{\epsilon}(F'_{cp};(\mathcal
H(E),\tau_{\omega}))$ is topologically isomorphic to a
subspace of $\big\{f\in \mathcal H(E;F)\colon P_mf(x)\in
\mathcal P_{K_p}(^mE;F),\ \forall x \in E, \ \forall m \in
\N\big\}$, considered with the Nachbin topology,
$\tau_{\omega}$.
\end{enumerate}
\end{proposition}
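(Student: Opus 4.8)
The plan is to analyze the transposition map $\Phi\colon f\mapsto f'$, where $f'(y')=y'\circ f\in\mathcal H(E)$, and to reduce both statements to the compact case $(\mathcal H_K(E;F),\tau_\omega)\cong \mathcal L_\epsilon(F'_c;(\mathcal H(E),\tau_\omega))$ of \cite[Theorem 4.1]{AS}. Two structural observations drive the reduction. First, since every $p$-compact set is compact, the $p$-compact-open topology on $F'$ is coarser than the compact-open one, so the identity $F'_c\to F'_{cp}$ is continuous; consequently every $T\in\mathcal L_\epsilon(F'_{cp};(\mathcal H(E),\tau_\omega))$ is automatically an element of $\mathcal L_\epsilon(F'_c;(\mathcal H(E),\tau_\omega))$. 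Second, the equicontinuous subsets of $F'$ are the norm-bounded ones and this depends only on the Banach topology of $F$, not on whether $F'$ carries the $c$ or the $cp$ topology, so $\mathcal L_\epsilon(F'_{cp};(\mathcal H(E),\tau_\omega))$ is a topological subspace of $\mathcal L_\epsilon(F'_c;(\mathcal H(E),\tau_\omega))$. With these at hand, (a) and (b) become the two inclusions obtained by restricting the Aron--Schottenloher isomorphism.

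For (a) I would first check that $\Phi$ sends $\mathcal H_{K_p}(E;F)$ into $\mathcal L_\epsilon(F'_{cp};(\mathcal H(E),\tau_\omega))$, i.e. that $f'\colon F'_{cp}\to(\mathcal H(E),\tau_\omega)$ is continuous whenever $f$ is $p$-compact. Fix a $\tau_\omega$-continuous seminorm $\rho$ ported by a compact set $K\subset E$. Cover $K$ by finitely many balls $B_{\varepsilon_i}(x_i)$ on each of which $f$ is $p$-compact and put $W=\bigcup_i B_{\varepsilon_i}(x_i)$, an open set containing $K$; then $f(W)$ is a finite union of relatively $p$-compact sets, hence relatively $p$-compact (a realizing sequence is obtained by interleaving the sequences realizing each piece). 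Writing $L=\overline{f(W)}$, the porting inequality applied to the open set $W$ gives
$$
\rho(f'(y'))=\rho(y'\circ f)\le c(W)\sup_{z\in W}|y'(f(z))|\le c(W)\sup_{w\in L}|y'(w)|,
$$
and the right-hand side is a continuous seminorm of $y'$ on $F'_{cp}$ because $L$ is $p$-compact; hence $f'$ is continuous. Injectivity of $\Phi$ is immediate from Hahn--Banach, and since $\Phi$ is the restriction of the Aron--Schottenloher topological isomorphism to $\mathcal H_{K_p}(E;F)$ (carrying the subspace $\tau_\omega$) with image inside the topological subspace $\mathcal L_\epsilon(F'_{cp};\cdot)$, it is a topological isomorphism onto its image.

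For (b) I would start from $T\in\mathcal L_\epsilon(F'_{cp};(\mathcal H(E),\tau_\omega))$. By the first observation above, $T$ also lies in $\mathcal L_\epsilon(F'_c;(\mathcal H(E),\tau_\omega))$, so \cite[Theorem 4.1]{AS} yields $T=f'$ for a (compact) $f\in\mathcal H(E;F)$. It remains to show that every Taylor polynomial $P_mf(x)$ is $p$-compact. The key identity is that, for $y'\in F'$,
$$
(P_mf(x))'(y')=y'\circ P_mf(x)=P_m(y'\circ f)(x)=\big(\pi_{m,x}\circ f'\big)(y'),
$$
where $\pi_{m,x}\colon g\mapsto P_mg(x)$ maps $(\mathcal H(E),\tau_\omega)$ into $(\mathcal P(^mE),\|\cdot\|)$. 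By the Cauchy inequalities $\|P_mg(x)\|\le r^{-m}\sup_{\|z-x\|\le r}|g(z)|$, so the seminorm $g\mapsto\|P_mg(x)\|$ is ported by $\{x\}$ and $\pi_{m,x}$ is continuous. Thus $(P_mf(x))'=\pi_{m,x}\circ f'\colon F'_{cp}\to(\mathcal P(^mE),\|\cdot\|)$ is continuous, and Proposition~\ref{P'} ((ii)$\Rightarrow$(i)) gives $P_mf(x)\in\mathcal P_{K_p}(^mE;F)$ for all $x$ and $m$. Restricting $\Phi^{-1}$ then embeds $\mathcal L_\epsilon(F'_{cp};\cdot)$ topologically into the stated space.

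The main obstacle I anticipate is bookkeeping at the level of topologies rather than sets: verifying cleanly that $\mathcal L_\epsilon(F'_{cp};\cdot)$ is a topological subspace of $\mathcal L_\epsilon(F'_c;\cdot)$ (matching equicontinuous families and the uniform-convergence topologies), so that the algebraic inclusions upgrade to topological isomorphisms onto subspaces. It is precisely here that one can only assert ``subspace of'' and not equality: the examples of Section~3 (e.g. Example~\ref{Pn p-comp f no}, where all Taylor polynomials are $p$-compact but $f$ is not) show both inclusions are strict in general.
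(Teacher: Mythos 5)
Your proof is correct, and its skeleton is the paper's: both parts hinge on the transposition map $f\mapsto f'$, the continuity of the identity $\iota\colon F'_c\to F'_{cp}$, the fact that equicontinuous subsets of $F'$ are the norm-bounded ones regardless of which of the two topologies $F'$ carries, and a reduction to \cite[Theorem 4.1]{AS}. Your part (b) is in fact the paper's argument verbatim: your $\pi_{m,x}$ is the paper's projection $D^m_x$, and the conclusion comes from the same appeal to Proposition~\ref{P'}, (ii)$\Rightarrow$(i). The only genuine divergence is the continuity step in (a). The paper tests $f'$ against the generating family of seminorms $q(g)=\sum_{m=0}^\infty\|P_mg(0)\|_{K+a_mB_E}$ (via \cite[Proposition~3.47]{DIN2}) and runs a Cauchy-inequality estimate, splitting the series at a suitable $m_0$ and rescaling the low-order terms; you instead test against Nachbin's ported seminorms directly, using a finite cover of $K$ by balls on which $f$ is $p$-compact together with the (correct, and easily verified by interleaving realizing sequences) fact that a finite union of relatively $p$-compact sets is relatively $p$-compact, so that the porting inequality immediately bounds $\rho(f'(y'))$ by $c(W)\sup_{w\in f(W)}|y'(w)|$, which is a $cp$-continuous seminorm of $y'$. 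Your variant is shorter and avoids the bookkeeping with $m_0$ and the constant $c$; the paper's choice of seminorm family is not wasted, though, since the same family and the same kind of estimate recur in Lemma~\ref{densidad} and Theorem~\ref{top-equiv}. One peripheral overstatement in your closing remark: Example~\ref{Pn p-comp f no} shows that the \emph{composite} inclusion, (a) followed by (b), is strict, hence that at least one of the two embeddings fails to be onto; it does not by itself show that both are strict. This does not affect the proof, since the proposition claims nothing about strictness.
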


\begin{proof}
To prove (a), fix $f$ in $\mathcal H_{K_p}(E;F)$ and 
consider $q$ any $\tau_{\omega}$-continuous seminorm  on
$\mathcal H(E)$. By  \cite[Proposition~3.47]{DIN2}, we may consider only the seminorms such that, for $g\in \mathcal H(E)$, 
$$
q(g)=\sum_{m=0}^{\infty}\|P_m g(0)\|_{K+a_mB_E},
$$ 
with $K\subset E$ an absolutely convex compact set and
$(a_m)_m$ a sequence in $c_0^+$. There exists $V\subset E$, an
open set such that $2K\subset V$ and  $f(V) \subset F$ is
$p$-compact. Fix $m_0 \in \N$ such that $2K+2a_mB_E \subset
V$, for all $m\geq m_0$. Now, choose $c>0$ such that $c(2K+2a_mB_E)
\subset 2K+2a_{m_0}B_E\subset V$, for all $m<m_0$. The polar set of $f(V)$, 
$f(V)^{\circ}$, is a neighborhood in $F'_{cp}$. By the Cauchy inequalities for entire functions, we have for all
$y' \in f(V)^{\circ}$, 

$$
\begin{array}{r l}
q(f'(y'))=&\displaystyle \sum_{m=0}^{\infty}
\|P_m\left(y'\circ f\right) (0)\|_{K+a_mB_E}\\
         =&\displaystyle  \sum_{m=0}^{\infty}\frac1{2^m}
\|P_m\left(y'\circ f\right) (0)\|_{2K+2a_mB_E}\\
         =&\displaystyle  \sum_{m<m_0} \frac1{2^m}
\|P_m\left(y'\circ f\right) (0)\|_{2K+2a_mB_E} + \sum_{m\ge
m_0} \frac1{2^m} \|P_m\left(y'\circ f\right)
(0)\|_{2K+2a_mB_E}\\
         \leq & \displaystyle  \sum_{m <m_0} \frac1{(2c)^m}
\|P_m\left(y'\circ f\right) (0)\|_{c(2K+2a_mB_E)} +
\sum_{m\ge m_0} \frac1{2^m} \|P_m\left(y'\circ f\right)
(0)\|_{2K+2a_mB_E}\\
         \leq &\displaystyle  \sum_{m <m_0} \frac1{(2c)^m}
\|y'\circ f\|_{c(2K+2a_mB_E)} + \sum_{m\ge m_0} \frac1{2^m}
\|y'\circ f\|_{2K+2a_mB_E}\\
         \leq &\displaystyle  \sum_{m<m_0} \frac1{(2c)^m}
\|y'\circ f\|_{V} + \sum_{m\ge m_0} \frac1{2^m} \|y'\circ
f\|_{V}\\
         \leq &\displaystyle  \sum_{m<m_0} \frac1{(2c)^m}  +
\sum_{m\ge m_0} \frac1{2^m} <\infty.
                 
\end{array}
$$

Then $f' \in \mathcal L (F'_{cp};(\mathcal
H(E),\tau_{\omega}))$.   Again, we use the continuity of the
identity map $\iota\colon F'_c\to F'_{cp}$ now, \cite[Theorem
4.1]{AS} implies the result.

To prove that (b) holds, take $T \in \mathcal
L(F'_{cp};(\mathcal H(E),\tau_{\omega}))$ which, in
particular, is an operator in $\mathcal L(F'_c;(\mathcal
H(E),\tau_{\omega}))$. By  \cite[Theorem 4.1]{AS},  $T=f'$
for some $f \in \mathcal H_K(E;F)$. By virtue of
Proposition~\ref{P'}, it is enough to show that
$\left(P_mf(x)\right)'\colon F'_{cp}\rightarrow( \mathcal
P(^mE),\|.\|)$ is continuous, for each $m\in \mathbb N$.
Consider $D_{x}^m\colon (\mathcal
H(E),\tau_{\omega})\rightarrow ( \mathcal P(^mE),\|.\|)$ the
continuous projection given by $D_{x}^m(g)=P_mg(x)$, for all
$g\in \mathcal H(E)$. Note that  $\left(P_mf(x)\right)'$ and
$D_{x}^m \circ f'$ coincide as linear operators.  Hence, we
obtain the result.
\end{proof}

Example~\ref{Pn p-comp f no} shows that there exists an
entire function $f\colon \ell_1\to \ell_p$, so that every
homogeneous polynomial in its Taylor series expansion at $y$
is $q$-compact for any $y\in \ell_1$, for all $1\le
q<\infty$, but $f$ fails to be $q$-compact  at $y$, for
every $y$ and every $q\le p$. However, we have the
following result. 

\begin{lemma}\label{densidad}
Let $E$ and $F$ be Banach spaces. Then, 

 $\mathcal H_{K_p}(E;F)$ is $\tau_{\omega}$-dense in $\{f\in
\mathcal H(E;F)\colon P_mf(x)\in \mathcal P_{K_p}(^mE;F),\
\forall x \in E,\ \forall m \in \N\}$.
\end{lemma}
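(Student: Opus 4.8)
The plan is to approximate every $f$ in the right-hand class by the partial sums of its Taylor series expansion at the origin. Note first that the inclusion $\mathcal H_{K_p}(E;F)\subset\{f\in\mathcal H(E;F)\colon P_mf(x)\in\mathcal P_{K_p}(^mE;F),\ \forall x,\ \forall m\}$ holds by \cite[Proposition~3.5]{AMR}, so the statement is a genuine density assertion. Recall from \cite[Proposition~3.47]{DIN2}, already invoked in the proof of Proposition~\ref{p-PA-eps}, that a fundamental system of $\tau_\omega$-continuous seminorms on $\mathcal H(E;F)$ is given by $q(g)=\sum_{m=0}^\infty\|P_mg(0)\|_{K+a_mB_E}$, with $K\subset E$ absolutely convex and compact and $(a_m)_m\in c_0^+$; in particular $q(g)<\infty$ for every $g\in\mathcal H(E;F)$. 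Fix $f$ in the right-hand class and put $S_Nf=\sum_{m=0}^N P_mf(0)$, a polynomial of degree at most $N$ whose homogeneous components $P_mf(0)$ are $p$-compact by hypothesis.

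The first step is to check that each $S_Nf$ belongs to $\mathcal H_{K_p}(E;F)$, that is, that it is $p$-compact at every $x_0\in E$. Choosing the bounded neighbourhood $V=x_0+B_E$ and $R=\|x_0\|+1$, one has $S_Nf(V)\subset\sum_{m=0}^N P_mf(0)(V)$ and $P_mf(0)(V)\subset R^m\,P_mf(0)(B_E)$, which is relatively $p$-compact because $P_mf(0)$ is a $p$-compact polynomial. Since a finite Minkowski sum of relatively $p$-compact sets is again relatively $p$-compact (the finite case of Lemma~\ref{summing p-comp}, padding with copies of $\{0\}$), the set $S_Nf(V)$ is relatively $p$-compact, and therefore $S_Nf\in\mathcal H_{K_p}(E;F)$.

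The second step is the convergence $S_Nf\to f$ in $\tau_\omega$. The Taylor expansion of $f-S_Nf$ at $0$ is $\sum_{m>N}P_mf(0)$, so for every seminorm $q$ as above we obtain $q(f-S_Nf)=\sum_{m>N}\|P_mf(0)\|_{K+a_mB_E}$, which is the tail of the convergent series $q(f)$ and hence tends to $0$ as $N\to\infty$. This shows that $f$ lies in the $\tau_\omega$-closure of $\mathcal H_{K_p}(E;F)$, proving the lemma.

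The argument is short because its two substantive ingredients are already in hand: the stability of relatively $p$-compact sets under finite sums (Lemma~\ref{summing p-comp}) and the explicit description of the $\tau_\omega$-seminorms forcing the Taylor series to converge (\cite[Proposition~3.47]{DIN2}). The one point I would verify carefully is precisely that these displayed seminorms constitute a fundamental system in the vector-valued setting $\mathcal H(E;F)$, so that convergence against all of them is equivalent to $\tau_\omega$-convergence; granting this, the density follows at once.
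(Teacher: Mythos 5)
Your proof is correct and takes essentially the same route as the paper's: approximate $f$ by the partial sums of its Taylor series at $0$ (which are $p$-compact entire functions) and use the seminorms $q(g)=\sum_{m=0}^\infty\|P_mg(0)\|_{K+a_mB_E}$ to see that the tail, being the remainder of the convergent series $q(f)$, can be made arbitrarily small. The only difference is cosmetic: you spell out, via the finite case of Lemma~\ref{summing p-comp}, why the partial sums belong to $\mathcal H_{K_p}(E;F)$, a point the paper asserts without proof.
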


\begin{proof}
Fix $f \in \mathcal H(E;F)$ so that $P_mf(x)\in
\mathcal P_{K_p}(^mE;F)$ for all $x \in E$ and for all $m$. 
Let $\ep>0$ and  let $q$ be  any $\tau_{\omega}$-continuous
seminorm on $\mathcal H(E;F)$  of the form 
$$
q(g)=\sum_{m=0}^{\infty}\|P_m g(0)\|_{K+a_mB_E},
$$  
with $K\subset E$ absolutely convex and compact and $(a_m)_m
\in c_0^+$. Consider $m_0 \in \N$ such that $\sum_{m\ge
m_0}\|P_mf(0)\|_{K+a_mB_E}<\ep$.  Now, let
$f_0=\sum_{m<m_0}P_mf(0)$, which is $p$-compact. Note that
$q(f-f_0)\leq \ep$ and the lemma follows.   
\end{proof}

\begin{proposition}\label{Nach-p-compact}
Let $E$ be a Banach space. Then, the following statements are
equivalent.
\begin{enumerate}
\item [\rm{(i)}] $E$ has the $p$-approximation property.
\item [\rm{(ii)}] $\mathcal H(F)\otimes E$ is
$\tau_{\omega}$-dense in $\mathcal H_{K_p}(F;E)$ for all
Banach spaces $F$.
\end{enumerate}
\end{proposition}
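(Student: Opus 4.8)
The plan is to reduce the statement to its polynomial counterpart, Proposition~\ref{polinomio epsilon}, by passing between a holomorphic function and the homogeneous polynomials of its Taylor expansion at the origin. Recall that $\tau_\omega$ on $\mathcal H(F;E)$ is described through the seminorms $q(g)=\sum_{m=0}^\infty \|P_m g(0)\|_{K+a_m B_F}$, with $K\subset F$ absolutely convex compact and $(a_m)_m\in c_0^+$. The two facts I would exploit are that each weight set $K+a_m B_F$ is bounded, say $K+a_mB_F\subset R_mB_F$, so that $\|P\|_{K+a_mB_F}\le R_m^m\|P\|$ for every $m$-homogeneous $P$, and that the component projections $D_0^m\colon(\mathcal H(F;E),\tau_\omega)\to(\mathcal P(^mF;E),\|\cdot\|)$, $D_0^m(g)=P_mg(0)$, are $\tau_\omega$-continuous, exactly as used in the proof of Proposition~\ref{p-PA-eps}.

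For (i)$\Rightarrow$(ii) I would fix $f\in\mathcal H_{K_p}(F;E)$, a seminorm $q$ as above, and $\varepsilon>0$. Since $q(f)<\infty$, first choose $m_0$ so that the tail $\sum_{m\ge m_0}\|P_mf(0)\|_{K+a_mB_F}<\varepsilon/2$, and replace $f$ by the partial sum $f_0=\sum_{m<m_0}P_mf(0)$, which already gives $q(f-f_0)<\varepsilon/2$. Each $P_mf(0)$ is $p$-compact by Proposition~\ref{p-comp radius}, so by Proposition~\ref{polinomio epsilon} and the $p$-approximation property of $E$ I can select finite rank polynomials $Q_m\in\mathcal P(^mF)\otimes E$ with $\|P_mf(0)-Q_m\|$ as small as I wish; using $\|\cdot\|_{K+a_mB_F}\le R_m^m\|\cdot\|$ over the finitely many indices $m<m_0$ I arrange $\sum_{m<m_0}\|P_mf(0)-Q_m\|_{K+a_mB_F}<\varepsilon/2$. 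Setting $g=\sum_{m<m_0}Q_m\in\mathcal H(F)\otimes E$ then yields $q(f-g)<\varepsilon$, which is the desired density.

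For (ii)$\Rightarrow$(i) I would invoke the $m=1$ instance of Proposition~\ref{polinomio epsilon}: it suffices to prove that $F'\otimes E$ is $\|\cdot\|$-dense in $\mathcal K_p(F;E)$ for every Banach space $F$. Given $T\in\mathcal K_p(F;E)$, note that $T$, regarded as a linear map, is a $p$-compact entire function, since every translate $T(x)+T(B_F)$ of its image is relatively $p$-compact; hence $T\in\mathcal H_{K_p}(F;E)$. By hypothesis there is a net $g_\alpha\in\mathcal H(F)\otimes E$ with $g_\alpha\to T$ in $\tau_\omega$. Applying the continuous projection $D_0^1$, and using both $D_0^1(\mathcal H(F)\otimes E)\subset F'\otimes E$ and $D_0^1(T)=T$ (the Taylor expansion of a linear map at $0$ is the map itself), I obtain $P_1g_\alpha(0)\to T$ in $\|\cdot\|$ with each $P_1g_\alpha(0)$ of finite rank, which proves the required density.

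The truncation and the boundedness estimates are routine; the delicate point is the first direction, where the finite rank approximation of the Taylor coefficients must be performed simultaneously and compatibly with the weighted sup-norms defining $\tau_\omega$. This is exactly where the boundedness of $K+a_mB_F$ and the reduction to the finitely many components $m<m_0$ are essential, since no single operator-norm bound controls the whole series at once. The converse is comparatively transparent once one recognizes that extracting the degree-one part of the Taylor series is a $\tau_\omega$-to-$\|\cdot\|$ continuous operation sending $\mathcal H(F)\otimes E$ into $F'\otimes E$.
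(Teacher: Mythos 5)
Your proof is correct, but your direction (i)$\Rightarrow$(ii) takes a genuinely different route from the paper's. The paper argues structurally: by \cite{GaLaTur}, the $p$-approximation property gives density of $E\otimes G$ in $\mathcal L_{\epsilon}(E'_{cp};G)$ for \emph{every} locally convex space $G$; taking $G=(\mathcal H(F),\tau_{\omega})$ and invoking Proposition~\ref{p-PA-eps}~(a), which embeds $(\mathcal H_{K_p}(F;E),\tau_{\omega})$ topologically as a subspace of $\mathcal L_{\epsilon}(E'_{cp};(\mathcal H(F),\tau_{\omega}))$, the density passes automatically to the subspace. You instead work at the level of Taylor coefficients: truncate the series (this is exactly the paper's Lemma~\ref{densidad}), approximate the finitely many $p$-compact polynomials $P_mf(0)$, $m<m_0$, in the sup norm via Proposition~\ref{polinomio epsilon}, and absorb the weights $K+a_mB_F\subset R_mB_F$ by homogeneity. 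This is more elementary and self-contained---it bypasses the $\epsilon$-product machinery and the locally convex density theorem of \cite{GaLaTur}---and it is in fact the very technique the paper itself uses later for the $\kappa_p$-approximation property, in Theorem~\ref{Pol-kpAP}, (ii)$\Rightarrow$(iii); what the paper's route buys is reusability, since once Proposition~\ref{p-PA-eps} is available the density statement needs no series manipulation at all. Your direction (ii)$\Rightarrow$(i) is essentially the paper's proof: both extract the degree-one coefficient through the $\tau_{\omega}$-continuous seminorm $f\mapsto\|P_1f(0)\|$ and conclude by \cite[Theorem 2.1]{DOPS} (your appeal to the ``$m=1$ instance'' of Proposition~\ref{polinomio epsilon} is really this theorem, which is what the proof of that proposition uses for its converse). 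The only substantive difference is that you place $T$ directly in $\mathcal H_{K_p}(F;E)$ by asserting that translates of relatively $p$-compact sets are relatively $p$-compact; this assertion is true but deserves a justification---for instance, apply the paper's Lemma~\ref{summing p-comp} to the two sets $\{T(x)\}$ and $rT(B_F)$---whereas the paper sidesteps the verification altogether by passing through the larger class $\mathcal H_0$ of functions with $p$-compact Taylor coefficients and Lemma~\ref{densidad}.
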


\begin{proof}
If $E$ has the $p$-approximation property, $E\otimes G$ is
dense in $\mathcal L_{\epsilon}(E'_{c_p};G)$ for all locally
convex space $G$  \cite{GaLaTur},  in particular if we
consider $G=(\mathcal H(F), \tau_{\omega})$. Applying
Proposition~\ref{p-PA-eps}~(a), we have the first assertion.

For the converse, put $\mathcal H_0 =\{f\in \mathcal
H(F;E)\colon P_mf(x)\in \mathcal P_{K_p}(^mF;E),\ \forall x
\in E,\ \forall m \in \N\}$. By Lemma~\ref{densidad},
$\mathcal H(F)\otimes E$ is $\tau_{\omega}$-dense in
$\mathcal H_0$. Now, take $T \in \mathcal K_p (F;E)$ and
$\ep>0$. Since $T \in \mathcal H_0$ and $q(f)=\|P_1f(0)\|$
is a $\tau_{\omega}$-continuous seminorm, there exists $g
\in \mathcal H(F)\otimes E$ such that $q(T-g) \leq \ep$. But
$q(T-g)= \|T-P_1g(0)\|$ and since $P_1g(0) \in F'\otimes E$,
we have shown that $F'\otimes E$ is $\|.\|$-dense in
$\mathcal K_p (F;E)$. By \cite[Theorem 2.1]{DOPS}, $E$ has
the $p$-approximation property.
\end{proof}

\section{Holomorphy types and topologies}
%

In this section we show that $p$-compact holomorphic
functions fit into the framework of holomorphy types. Our
notation and terminology  follow that given in \cite{DIN}.
Since, $\mathcal P_{K_p}(^mE;F)$ is a subspace of $\mathcal P(^mE;F)$ and $\mathcal P_{K_p}(^0E;F)=F$, the first two conditions in the definition of a holomorphy type are fulfilled. Therefore, we only need to  corroborate that the sequence $(\mathcal P_{K_p}(^mE;F), \kappa_p)_m$ satisfies the third condition. Indeed, this  last condition will be also fulfilled if we show 
\begin{equation}\label{holo-type}
\kappa_p(P_l(P)(a)) \le (2e)^m\kappa_p(P)\|a\|^{m-l},
\end{equation}
for every $P\in \mathcal P_{K_p}(^mE;F)$,  for all $l=1,\ldots, m$ and for all $m$, where $P_l(P)(a)$ denotes the $l$-component in the expansion of $P$ at $a$. 

A function $f\in \mathcal H(E;F)$ is said to be of holomorphic type $\kappa_p$, at $a$, if there exist $c_1, c_2>0$ such that each component of its Taylor series expansion, at $a$, is a $p$-compact polynomial satisfying that $\kappa_p(P_m f(a)) \le c_1 c_2^m$.

To give a simple proof of
the fact that  $(\mathcal P_{K_p}(^mE;F), \kappa_p)_m$ satisfy the inequalities given in \eqref{holo-type} we use the following notation. Let $P \in \mathcal P(^mE;F)$
and fix $a \in E$, we denote  by $P_{a^l}$ the
$(m-l)$-homogeneous polynomial defined as 
$$
P_{a^l}(x):~=\overset\vee{P}(a^l,x^{m-l}),$$
for all  $x\in E$ and $l<m$. Note that, for any $j<l<m$, we
have that $P_{a^l}=(P_{a^{l-j}})_{a^j}$ and that
$P_l(P)(a)=\genfrac{(}{)}{0pt}{}{m}{m-l}P_{a^{m-l}}$. We
 appeal to the description of $P_a$ given in \cite[Corollary~
1.8,~b)]{CaDiMu}: 
\begin{equation}\label{cdm}
P_a(x)=\overset\vee{P}(a,x^{m-1})=\frac{1}{m^2}\frac{1}{
(m-1)^{m-1}}\sum_{j=1}^{ m-1}P((m-1)r^j x+a).
\end{equation}
where $r \in \mathbb C$ is such that $r^m=1$ and $r^j\neq 1$
for $j<m$.

\begin{theorem}\label{tipoholo}
For any Banach spaces $E$ and $F$, the sequence $(\mathcal P_{K_p}(^mE;F), \kappa_p)_m$
is a holomorphy type from $E$ to $F$.
\end{theorem}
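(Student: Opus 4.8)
The plan is to establish the only condition not yet checked in the excerpt, namely the estimate \eqref{holo-type}, since the text already records that $\mathcal P_{K_p}(^mE;F)$ is a subspace of $\mathcal P(^mE;F)$ and that $\mathcal P_{K_p}(^0E;F)=F$. Rather than iterate the one-step identity \eqref{cdm} (which reduces the degree by one and forces the constants to compound), I would extract the whole $l$-component in a single stroke by a discrete Cauchy formula. Fix $x,a\in E$. The scalar map $\lambda\mapsto P(a+\lambda x)$ is an $F$-valued polynomial of degree $m$ whose coefficients are precisely $P_l(P)(a)(x)$, so for any $\rho>0$, writing $\omega=e^{2\pi i/(m+1)}$, the orthogonality of the $(m+1)$-th roots of unity gives
\[
P_l(P)(a)(x)=\frac{1}{(m+1)\rho^{l}}\sum_{k=0}^{m}\omega^{-kl}\,P(a+\rho\,\omega^{k}x),
\]
which is just the roots-of-unity refinement of \eqref{cdm} isolating the $l$-th component with a free radius $\rho$.

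The decisive point is how this formula interacts with $\m_p$. As $x$ runs over $B_E$, each argument satisfies $a+\rho\omega^k x\in a+\rho B_E\subset(\|a\|+\rho)B_E$, so by $m$-homogeneity and monotonicity of $\m_p$ the value $P(a+\rho\omega^k x)$ lies in $(\|a\|+\rho)^m P(B_E)$, a relatively $p$-compact set with $\m_p$ at most $(\|a\|+\rho)^m\kappa_p(P)$. Multiplying by the unimodular scalar $\omega^{-kl}$ and the factor $(m+1)^{-1}\rho^{-l}$ only rescales $\m_p$ by $(m+1)^{-1}\rho^{-l}$, since $|\omega^{-kl}|=1$. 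Dropping the constraint that all $m+1$ summands share the same $x$, the set $P_l(P)(a)(B_E)$ is contained in the Minkowski sum of $m+1$ such sets, and Lemma~\ref{summing p-comp} yields
\[
\kappa_p\big(P_l(P)(a)\big)=\m_p\big(P_l(P)(a)(B_E)\big)\le (m+1)\,\frac{(\|a\|+\rho)^m}{(m+1)\rho^{l}}\,\kappa_p(P)=\frac{(\|a\|+\rho)^m}{\rho^{l}}\,\kappa_p(P).
\]

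Finally I would simply take $\rho=\|a\|$ (for $a=0$ the inequality is immediate, as $P_l(P)(0)$ equals $0$ for $l<m$ and $P$ for $l=m$), which collapses the bound to $\kappa_p(P_l(P)(a))\le 2^{m}\|a\|^{m-l}\kappa_p(P)$. This is in fact sharper than the required \eqref{holo-type}, whose constant $(2e)^m$ then follows trivially from $2^m\le(2e)^m$; hence the sequence $(\mathcal P_{K_p}(^mE;F),\kappa_p)_m$ meets the third holomorphy-type condition and the theorem is proved. I expect the only genuinely delicate step to be the $\m_p$ bookkeeping of the middle paragraph: one must use that $\m_p$ is invariant under unimodular scaling, monotone under inclusion, homogeneous of degree one under real dilations, and subadditive over a finite Minkowski sum — this last property being exactly the content of Lemma~\ref{summing p-comp}. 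Verifying the discrete Cauchy identity and the constant bound is then routine.
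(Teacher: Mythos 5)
Your proof is correct, but it follows a genuinely different route from the paper's. The paper never writes a closed formula for the full $l$-component: it proves the single-variable estimate $\kappa_p(P_a)\le e\|a\|\kappa_p(P)$ for $P_a(x)=\overset\vee{P}(a,x^{m-1})$ by combining the averaging identity \eqref{cdm} quoted from \cite{CaDiMu} with Lemma~\ref{summing p-comp}, and then obtains \eqref{holo-type} by induction through $P_{a^l}=(P_{a^{l-1}})_a$ and $P_l(P)(a)=\binom{m}{m-l}P_{a^{m-l}}$; the binomial coefficient together with the compounded factors $e^{m-l}$ is what produces the constant $(2e)^m$. You instead isolate the whole $l$-component in one stroke by an $(m+1)$-point root-of-unity average with a free radius $\rho$, apply Lemma~\ref{summing p-comp} once to the resulting finite Minkowski sum (the lemma is stated for infinite sequences of sets, but padding with copies of $\{0\}$ covers finite families, exactly as in the paper's own use of it inside this very proof), and then take $\rho=\|a\|$; your identity is verified directly by orthogonality, the $\m_p$-bookkeeping (unimodular invariance, homogeneity, monotonicity, finite subadditivity) is exactly right, and your argument simultaneously yields the membership $P_l(P)(a)\in\mathcal P_{K_p}(^lE;F)$ --- for which the paper invokes \cite[Proposition 3.5]{AMR} --- and the sharper constant $2^m$, with no induction. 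As for what each route buys: the paper's one-step bound $\kappa_p(P_a)\le e\|a\|\kappa_p(P)$, with constant \emph{uniform} in $m$, is precisely what its closing remark needs to assert that $(\mathcal P_{K_p}(^mE;F))_m$ is a coherent sequence in the sense of \cite{CaDiMu}; your choice $\rho=\|a\|$ specialised to $l=m-1$ gives only $\kappa_p(P_a)\le \frac{2^m}{m}\|a\|\kappa_p(P)$, which is not uniform in $m$, but nothing is really lost, since choosing $\rho=(m-1)\|a\|$ in your formula recovers $\kappa_p(P_a)\le\left(\frac{m}{m-1}\right)^{m-1}\|a\|\kappa_p(P)\le e\|a\|\kappa_p(P)$ as well. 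Conversely, your derivation is self-contained, which here is a genuine advantage: as transcribed in the paper, \eqref{cdm} is missing the unimodular weights $r^j$ inside the sum (test $m=2$, where it reads $\overset\vee{P}(a,x)=\frac14 P(a-x)$), so your directly verified formula stands on firmer ground than the quoted one, while the substance of the paper's estimate is unaffected.
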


\begin{proof} If $ P \in \mathcal P_{K_p}(^mE;F)$ by
\cite[Proposition 3.5]{AMR} or Proposition~\ref{p-comp
radius} we have that $P_j(P)(a) \in \mathcal P_{K_p}(^jE;F)$
for all $a \in E$, for all $j\le m$. To prove the holomorphy
type structure, we will show that
$\kappa_p(P_j(P)(a))\leq 2^me^m\|a\|^{m-j}\kappa_p(P)$, for
all $j\le m$. 

Fix $a \in E$.  If we show that $\kappa_p(P_{a})\leq e
\|a\|\kappa_p(P)$ then the proof is complete using a
generalized inductive reasoning. Indeed, suppose that for
any $p$-compact homogeneous polynomial $Q$, of degree less
than $m$, the inequality $\kappa_p(Q_{a})\leq e
\|a\|\kappa_p(Q)$ holds. Then, since
$P_{a^l}=(P_{a^{l-1}})_a$ and
$P_j(P)(a)=\genfrac{(}{)}{0pt}{}{m}{m-j} P_{a^{m-j}}$, we
obtain
$$
\begin{array}{r l}
\kappa_p(P_j(P)(a))& =\genfrac{(}{)}{0pt}{}{m}{m-j}
\kappa_p(P_{a^{m-j}}) =\genfrac{(}{)}{0pt}{}{m}{m-j}
\kappa_p((P_{a^{m-j-1}})_a)\\
                    & \leq \genfrac{(}{)}{0pt}{}{m}{m-j}
\displaystyle  e \|a\|\kappa_p((P_{a^{m-j-1}}))\\
                    & \leq \genfrac{(}{)}{0pt}{}{m}{m-j}
\displaystyle  e^{m-j} \|a\|^{m-j} \kappa_p(P)\\
                    &\displaystyle \leq 2^me^m \|a\|^{m-j}
\kappa_p(P).
\end{array}
$$                    

Now, take $P \in \mathcal P_{K_p}(^mE;F)$. Then
\begin{equation}\label{1ro}
\kappa_p(P_a)=\m_p(\overset\vee{P}(a,B^{m-1}
_E))=\|a\|\m_p(\overset\vee{P}({\textstyle \frac{a}{ \|a\|}},B^{m-1}_E)).
\end{equation}

Using (\ref{cdm}) and  Lemma~\ref{summing p-comp} we have
\begin{equation}\label{2do}
\|a\|\m_p(\overset\vee{P}(\frac{a}{\|a\|},B^{m-1}_E))\leq
{\textstyle \|a\|\frac{1}{m^2}\frac{1}{(m-1)^{m-1}}}\sum_{j=1}^{m-1}
{\textstyle \m_p(P((m-1)r^jB_E+\frac{ a}{\|a\|}))}.
\end{equation}

Since $ \sup\{ \|x\|\colon\ x \in
(m-1)r^jB_E+\frac{a}{\|a\|}\}=m$, 
\begin{equation}\label{3ro}
\begin{array}{rcl}
 \|a\|\m_p(\overset\vee{P}(\frac{a}{\|a\|},B^{m-1}_E)) &\leq & \frac{\|a\|}{m^2
(m-1)^{m-1}} {\displaystyle \sum_{j=1}^{m-1}}\m_p(P((m-1)r^jB_E+\frac{a}{
\|a\|}))\\
& = & \frac{\|a\|}{m^2
(m-1)^{m-1}}{\displaystyle \sum_{j=1}^{m-1}} m^m\m_p\big(P\big(\textstyle{\frac 1m (
(m-1)r^jB_E+\frac{a}{\|a\|}})\big)\big)\\ 
&  \leq & \|a\| (\frac m{m-1})^{m-1}
\kappa_p(P)\leq e\|a\|\kappa_p(P).
\end{array}
\end{equation}

Combining (\ref{1ro}), (\ref{2do}) and (\ref{3ro}) we get that
$\kappa_p(P_{a})\leq e \|a\|\kappa_p(P)$, as we wanted to
show.
\end{proof}

As a consequence we have the following result.

\begin{corollary} Let $f$ be a function in $\mathcal H(E;F)$, then $f \in \mathcal H_{K_p}(E;F)$ if and only
if $f$ is of $\kappa_p$-holomorphy type.
\end{corollary}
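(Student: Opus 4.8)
The plan is to deduce the statement directly from Proposition~\ref{p-comp radius}, observing that both membership in $\mathcal H_{K_p}(E;F)$ and the property of being of $\kappa_p$-holomorphy type are conditions imposed at each point $a\in E$ separately. Thus it suffices to fix $a\in E$ and show that $f$ is $p$-compact at $a$ if and only if every $P_mf(a)$ is $p$-compact and satisfies a geometric bound $\kappa_p(P_mf(a))\le c_1c_2^m$ for suitable constants $c_1,c_2>0$ (depending on $a$). Once this pointwise equivalence is established, quantifying over all $a\in E$ gives the corollary.

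By Proposition~\ref{p-comp radius}, $f$ is $p$-compact at $a$ precisely when each $P_mf(a)$ belongs to $\mathcal P_{K_p}(^mE;F)$ and $\limsup_m \kappa_p(P_mf(a))^{1/m}<\infty$. So the only thing left to verify is that, for the sequence of nonnegative reals $(\kappa_p(P_mf(a)))_m$, the condition $\limsup_m\kappa_p(P_mf(a))^{1/m}<\infty$ is equivalent to the existence of $c_1,c_2>0$ with $\kappa_p(P_mf(a))\le c_1c_2^m$ for all $m$. This is the familiar equivalence between the root test and a geometric majorant.

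For the first implication, if $\kappa_p(P_mf(a))\le c_1c_2^m$ for all $m$, then $\kappa_p(P_mf(a))^{1/m}\le c_1^{1/m}c_2\to c_2$, so $\limsup_m\kappa_p(P_mf(a))^{1/m}\le c_2<\infty$. Conversely, writing $L=\limsup_m\kappa_p(P_mf(a))^{1/m}<\infty$ and choosing $c_2=L+1$, there is an $N$ such that $\kappa_p(P_mf(a))\le c_2^m$ for all $m\ge N$; absorbing the finitely many initial terms by taking $c_1=\max\{1,\ \max_{m<N}\kappa_p(P_mf(a))c_2^{-m}\}$ yields $\kappa_p(P_mf(a))\le c_1c_2^m$ for every $m$. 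This proves both directions of the pointwise statement, and hence the corollary.

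There is no genuine obstacle here: all the analytic content is already contained in Proposition~\ref{p-comp radius}, and the remaining argument is purely the bookkeeping that translates a finite $\limsup$ of $m$-th roots into a geometric bound. The only point to keep in mind is that the constants $c_1,c_2$ are allowed to depend on the base point $a$, which is consistent both with the definition of holomorphy type $\kappa_p$ at $a$ and with the pointwise nature of $p$-compactness.
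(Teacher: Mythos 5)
Your proof is correct and follows essentially the same route as the paper, whose proof simply defers to Proposition~\ref{p-comp radius} (together with \cite[Proposition 3.5]{AMR} and Theorem~\ref{tipoholo} for the holomorphy-type terminology): one applies the pointwise equivalence of Proposition~\ref{p-comp radius} at each $a\in E$. The only thing you add is the elementary bookkeeping identifying $\limsup_m\kappa_p(P_mf(a))^{1/m}<\infty$ with the existence of a geometric majorant $c_1c_2^m$, which the paper leaves implicit.
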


\begin{proof}
It follows from  Theorem~\ref{tipoholo} and \cite[Proposition 3.5]{AMR} or
Proposition~\ref{p-comp radius}.
\end{proof}

\begin{remark}{\rm
Theorem~\ref{tipoholo} can be improved. Indeed, the same
proof of Theorem~\ref{tipoholo} shows that the sequence
$(\mathcal P_{K_p}(^mE;F))_m$ is a coherent sequence
associated to the operator ideal $\mathcal K_p(E;F)$ (see
\cite{CaDiMu} for definitions)}.
\end{remark}

Since $\mathcal H_{K_p}(E,F)$ is a holomorphy type,
following~\cite{NACH2} we have a natural topology defined on $\mathcal H_{K_p}(E,F)$ denoted
by $\tau_{\omega ,\m_p}$. This topology may be generated by different families of continuous seminorms. The original set of seminorms used to define   $\tau_{\omega ,\m_p}$ corresponds to 
the family of seminorms given below in Theorem~\ref{top-equiv}, item (c). Our aim is to 
to characterize the $\kappa_p$-approximation property of a Banach space $E$ in
an analogous way to  \cite[Theorem 4.1]{AS}. In order to do so, 
we will give different descriptions of $\tau_{\omega ,\m_p}$.  First, we need the following
result.

\begin{proposition}\label{p-compact en 0}
Let $E$ and $F$ be Banach spaces. Then, $f \in \mathcal
H_{K_p}(E;F)$ if and only if, for all $m$, $P_mf(0) \in
\mathcal P_{K_p}(^mE;F)$ and for any absolutely convex
compact set $K$, there exists $\varepsilon >0$ such that
$\sum_{n=0}^{\infty}\m_p(P_mf(0)(K+\varepsilon
B_E))<\infty$.
\end{proposition}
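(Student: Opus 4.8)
The plan is to prove both implications directly, leaning on the generalized Cauchy estimate of Lemma~\ref{AMR-Cauchy} together with the summation Lemma~\ref{summing p-comp}. For the sufficiency, suppose each $P_mf(0)$ is $p$-compact and the stated summability holds. I would fix $x\in E$ and apply the hypothesis to the absolutely convex compact set $K=\Gamma(\{x\})$, which contains both $0$ and $x$; this produces $\varepsilon>0$ with $\sum_m \m_p(P_mf(0)(K+\varepsilon B_E))<\infty$. Since $\sup_{z\in K+\varepsilon B_E}\|P_mf(0)(z)\|\le \m_p(P_mf(0)(K+\varepsilon B_E))$, the Taylor series $\sum_m P_mf(0)$ converges uniformly on $K+\varepsilon B_E$; its partial sums are polynomials that agree with $f$ near $0$, so the identity principle gives $f(z)=\sum_m P_mf(0)(z)$ on the connected open set $K+\varepsilon\mathring B_E$. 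Because $x\in K$, we have $x+\varepsilon\mathring B_E\subset K+\varepsilon\mathring B_E$, whence $f(x+\varepsilon\mathring B_E)\subset \{\sum_m y_m : y_m\in P_mf(0)(K+\varepsilon B_E)\}$. Lemma~\ref{summing p-comp} then shows the right-hand set is relatively $p$-compact, so $f$ is $p$-compact at $x$; as $x$ is arbitrary, $f\in\mathcal H_{K_p}(E;F)$.

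For the necessity, the $p$-compactness of each $P_mf(0)$ is immediate from Proposition~\ref{p-comp radius} (or \cite[Proposition 3.5]{AMR}). Now I would fix an absolutely convex compact set $K$. For each $x\in K$, the $p$-compactness of $f$ at $x$ supplies a radius $r_x>0$ with $f(x+r_x\mathring B_E)$ relatively $p$-compact; covering $K$ by finitely many of the open balls $x+\tfrac{r_x}{2}\mathring B_E$ and taking $\delta$ below half the smallest of the selected radii, one checks that $K+\delta\mathring B_E$ is contained in the finite union of the $x_i+r_{x_i}\mathring B_E$. Hence $f(K+\delta\mathring B_E)$ is a finite union of relatively $p$-compact sets and therefore relatively $p$-compact (a finite union of relatively $p$-compact sets is relatively $p$-compact, by interlacing the defining sequences).

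The heart of the matter is turning the uniform Cauchy bound into a summable one. Applying Lemma~\ref{AMR-Cauchy}(b) with $x_0=0$ and the absolutely convex open set $V=K+\delta\mathring B_E$ gives $\m_p(P_mf(0)(K+\delta\mathring B_E))\le \m_p(f(K+\delta\mathring B_E))$, a bound constant in $m$ and so not directly summable. To extract geometric decay I would invoke the $m$-homogeneity of $P_mf(0)$ together with the absolute convexity of $K$: writing $\lambda K=K+(\lambda-1)K$ for $\lambda\ge 1$ and bounding $(\lambda-1)K\subset (\lambda-1)MB_E$ (where $K\subset MB_E$), one obtains $\lambda(K+\varepsilon B_E)\subset K+\big((\lambda-1)M+\lambda\varepsilon\big)B_E$. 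Choosing $\lambda>1$ and $\varepsilon>0$ so small that $(\lambda-1)M+\lambda\varepsilon<\delta$, homogeneity yields
\[
\m_p(P_mf(0)(K+\varepsilon B_E))=\lambda^{-m}\,\m_p\big(P_mf(0)(\lambda(K+\varepsilon B_E))\big)\le \lambda^{-m}\,\m_p\big(f(K+\delta\mathring B_E)\big),
\]
so that $\sum_m \m_p(P_mf(0)(K+\varepsilon B_E))\le \tfrac{\lambda}{\lambda-1}\,\m_p(f(K+\delta\mathring B_E))<\infty$. I expect this scaling step to be the main obstacle: the uniform estimate coming from the Cauchy lemma must be upgraded to summable decay, and it is precisely the absolute convexity of $K$ — which lets $\lambda K$ be absorbed into $K$ plus a small ball — that makes the homogeneity trick succeed. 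The passage between open and closed balls only affects closures, under which $\m_p$ is invariant, so it does not change any of the estimates.
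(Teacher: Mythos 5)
Your proof is correct and takes essentially the same route as the paper's: both directions rest on Lemma~\ref{summing p-comp} and Lemma~\ref{AMR-Cauchy}, a finite covering argument giving $p$-compactness of $f$ on a neighborhood of (a dilate of) $K$, and a homogeneity/scaling trick to convert the uniform Cauchy bound into geometrically decaying, summable estimates. The only differences are technical: for necessity the paper covers $2K$ and scales by exactly $\tfrac12$ (via $K+\tfrac d2 B_E=\tfrac12(2K+dB_E)$, giving the factor $2^{-m}$) where you scale by $\lambda$ close to $1$ and absorb $\lambda K$ into $K+(\lambda-1)MB_E$ by absolute convexity, and for sufficiency the paper invokes Nachbin's local uniform convergence and a minimum of two radii where you obtain convergence on all of $K+\varepsilon \mathring B_E$ by the Weierstrass test (using $\|u\|\le \m_p$) together with the identity principle.
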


\begin{proof}
Take $f \in \mathcal H_{K_p}(E;F)$ and $K$ an absolutely
convex compact set. Then, $2K$ is also absolutely convex and
compact. For each $x \in 2K$, there exist $\ep_{x}>0$ such
that $f(x+\ep_xB_E)$ is $p$-compact. Now, we choose
$x_1,\ldots, x_n \in 2K$ such that $K\subset
\bigcup_{j=1}^{n} (x_j+\ep_{x_j}B_E)$ and with
$V=\bigcup_{j=1}^{n} (x_j+\ep_{x_j}B_E)$ we have that $f(V)$
is $p$-compact. Let $d=\dist(2K,\mathcal CV)>0$, where
$\mathcal CV$ denotes the complement of $V$. Let us consider
$W=2K + dB_E$, then $W$ is an absolutely convex open set and
$2K\subset W\subset V$.
Then, applying Proposition~\ref{AMR-Cauchy} we have 
$$
\sum_{n=0}^{\infty} \m_p
(P_mf(0)(K+d/2B_E)=\sum_{n=0}^{\infty} (1/2)^m \m_p
(P_mf(0)(W))\leq 2 \m_p(f(W))< \infty,
$$
which proves the first claim. 

Conversely, let $f \in \mathcal H(E;F)$ satisfy the
conditions in the proposition.  We have to show that $f$ is
$p$-compact at $x$ for any  fixed $x \in E$. Consider the
absolutely convex compact set $K$, given by  $K=\{\lambda x
\colon |\lambda|\leq 1\}$. Then,  there exists
$\varepsilon_1 > 0$ such that
$\sum_{n=0}^{\infty}\m_p(P_mf(0)(K+\varepsilon_1
B_E))<\infty$. Since $f$ is an entire function, by
\cite[Proposition 1, p.26]{NACH}, there exists
$\varepsilon_2 >0$ such that $f(y)=\sum_{m=1}^{\infty}
P_mf(0)(y)$ uniformly for $y \in B_{\varepsilon_2}(x)$. Let
$\varepsilon=\min\{\varepsilon_1; \varepsilon_2\}$, then
$f(B_{\varepsilon}(x)) \subset \{\sum_{m=0}^{\infty} x_m
\colon x_m \in P_mf(0)(B_{\varepsilon}(x))\}$. 

Also 
$$
\sum_{m=0}^{\infty} \m_p(P_mf(0)(B_{\varepsilon}(x))\leq
\sum_{m=0}^{\infty}\m_p(P_mf(0)(K+\varepsilon_1B_E))<\infty.
$$
Now, applying Lemma~\ref{summing p-comp} we obtain that $f$
is $p$-compact at $x$, and the proof is complete.
\end{proof}

The next characterization of the topology $\tau_{\omega
,\m_p}$ associated to the holomorphy type  $\mathcal H_{K_p}
(E;F)$ follows that of \cite{DIN} and \cite{NACH}. 

\begin{theorem}\label{top-equiv}
Let $E$ and $F$ be Banach spaces and consider the space
$\mathcal H_{K_p} (E;F)$. Any of the  following families of
seminorms generate the topology $\tau_{\omega ,\m_p}$.
\begin{enumerate}
\item[(a)] The seminorms $p$ satisfying that there exists a
compact set $K$ such that for every open set $V\supset K$
there exists $C_V>0$ so that 
$$
p(f)\leq C_V \m_p(f(V)) \ \ \ \forall f \in \mathcal
H_{K_p}(E;F).
$$
In this case, we say that $p$ is $\m_p$-ported by compact
sets.

\item[(b)] The seminorms $p$ satisfying that there exists an
absolutely convex compact set $K$ such that 
for every absolutely convex open set $V\supset K$ there
exists $C_V>0$ so that 
$$
p(f)\leq C_V \m_p(f(V)) \ \ \ \forall f \in \mathcal H_{K_p}
(E;F).
$$
In this case, we say that $p$ is AC-$\m_p$-ported by 
absolutely convex compact sets. 

\item[(c)] The seminorms $p$ satisfying that there exists an
absolutely convex compact set $K$ such that, for all
$\varepsilon >0$ exists $C(\varepsilon)>0$ so that 
$$ 
p(f)\leq C(\ep) \sum_{m=0}^{\infty} \ep^m \sup_{x \in K}
\kappa_p(P_mf(x)) \ \ \ \forall f \in \mathcal H_{K_p}
(E;F).
$$

\item[(d)] The seminorms $p$ satisfying that there exists an
absolutely convex compact set $K$ such that, for all
$\varepsilon >0$ exists $C(\varepsilon)>0$ so that 
$$
p(f)\leq C(\varepsilon) \sum_{m=0}^{\infty}
\m_p(P_mf(0)(K+\varepsilon B_E) \ \ \ \forall f \in \mathcal
H_{K_p} (E;F).
$$

\item[(e)] The seminorms of the form
$$
p(f)=\sum_{m=0}^{\infty} \m_p(P_mf(0)(K+a_mB_E)),
$$ 
where $K$ ranges over all the absolutely convex compact sets
and $(a_m)_m \in c_0^+$.
\end{enumerate}
\end{theorem}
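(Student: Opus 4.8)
The plan is to use the standard reduction that two families of seminorms generate the same locally convex topology exactly when each seminorm of one family is bounded by a constant multiple of a seminorm of the other, and to link the five families by a cycle of such dominations. Since item (c) is, by definition, the family generating $\tau_{\omega,\m_p}$, it suffices to show $\tau_{(a)}=\tau_{(b)}=\tau_{(c)}$ and $\tau_{(d)}=\tau_{(e)}$ and then that these two blocks coincide. Throughout, Lemma~\ref{AMR-Cauchy} plays the role of the generalized Cauchy inequalities, Lemma~\ref{summing p-comp} and Remark~\ref{m_p de funcion} that of term-by-term summation, Theorem~\ref{tipoholo} that of the coefficient bounds needed to change the centre of expansion, and Proposition~\ref{p-compact en 0} guarantees that every seminorm in sight is finite-valued on $\mathcal H_{K_p}(E;F)$ (via the monotonicity $A\subset B\Rightarrow \m_p(A)\le\m_p(B)$ and the fact that $a_m\to 0$ eventually undercuts the good radius supplied by that proposition).

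First I would dispose of the block $\tau_{(a)}=\tau_{(b)}=\tau_{(c)}$. The equivalence (a)$\Leftrightarrow$(b) is purely topological: one replaces a compact $K$ by its closed absolutely convex hull (again compact) and uses that any open $V\supset K$ contains a tube $K+\delta B_E$. For (c)$\Rightarrow$(b) I would use a two--radii Cauchy estimate: for $x\in K$ and $\ep<\ep'$, Lemma~\ref{AMR-Cauchy}(b) gives $\ep'^{m}\kappa_p(P_mf(x))=\m_p(P_mf(x)(\ep'B_E))\le\m_p(f(K+\ep'B_E))$, so that $\sum_m\ep^m\sup_{x\in K}\kappa_p(P_mf(x))\le(1-\ep/\ep')^{-1}\m_p(f(K+\ep'B_E))$; choosing $\ep'$ with $K+\ep'B_E\subset V$ shows every (c)-seminorm is AC-$\m_p$-ported. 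For the converse (b)$\Rightarrow$(c) I would cover the compact $K$ by finitely many balls $x_i+\eta B_E$ with $x_i\in K$, expand $f$ about each centre $x_i$, and apply Remark~\ref{m_p de funcion} to obtain $\m_p(f(K+\ep'B_E))\le\sum_i\sum_k(\ep'+\eta)^k\kappa_p(P_kf(x_i))\le N(\eta)\sum_k\delta^k\sup_{x\in K}\kappa_p(P_kf(x))$ with $\delta=\ep'+\eta$; combined with the ported bound this exhibits every (b)-seminorm as a (c)-seminorm. Finally, (b)$\Rightarrow$(d) is immediate from Lemma~\ref{summing p-comp} applied to the expansion of $f$ at $0$ over $K+\ep B_E$, and (e)$\subset$(d) follows from the $m$-homogeneity of $\m_p(P_mf(0)(\cdot))$ together with $K+a_mB_E\subset(a_m/\ep)(K+\ep B_E)$ for the finitely many indices with $a_m>\ep$. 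At this point $\tau_{(a)}=\tau_{(b)}=\tau_{(c)}\subseteq\tau_{(d)}$ and $\tau_{(e)}\subseteq\tau_{(d)}$.

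It remains to prove the reverse inclusion $\tau_{(d)}\subseteq\tau_{(c)}$ and the missing half of $\tau_{(d)}=\tau_{(e)}$, and I expect this to be the main obstacle. The difficulty is structural: the families (d),(e) measure the components $P_mf(0)$ of the single expansion at the origin on the tubes $K+\ep B_E$, and, because in infinite dimensions a compact $K$ contains no ball, the tube does not scale and the crude bound $\m_p(P_mf(0)(K+\ep B_E))\le(R+\ep)^m\kappa_p(P_mf(0))$, with $R=\sup_{x\in K}\|x\|$, only yields control by a (c)-series with \emph{large} parameter $\delta\gtrsim R$, i.e.\ by the finer bounded-type topology rather than by $\tau_\omega$. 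The way around this is the classical Nachbin--Dineen diagonal (``gliding hump'') argument, which I would adapt verbatim to the present setting: from the family of tube estimates ``for every $\ep$ there is $C(\ep)$ with $p(f)\le C(\ep)\sum_m\m_p(P_mf(0)(K+\ep B_E))$'' one manufactures a single sequence $(a_m)_m\in c_0^+$ by fixing $\ep_k\downarrow 0$ and letting $(a_m)$ be constant equal to $\ep_k$ on consecutive blocks of indices, balancing the blocks against the convergence guaranteed by Proposition~\ref{p-compact en 0}; this produces an (e)-seminorm dominating $p$, giving $\tau_{(d)}\subseteq\tau_{(e)}$.

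To close the cycle I would then show that every explicit (e)-seminorm is $\tau_\omega$-continuous, i.e.\ dominated by a genuinely ported (compact-$K$) seminorm. Here compactness of $K$ is used essentially through a finite subcover, and Theorem~\ref{tipoholo} is invoked exactly where a change of expansion centre occurs, converting it into a bounded geometric loss; this is the step that prevents the appearance of the spurious radius $R$ and keeps us within $\tau_\omega$ rather than a finer topology. Granting these passages one obtains $\tau_{(d)}\subseteq\tau_{(e)}\subseteq\tau_{(b)}\subseteq\tau_{(d)}$, so that all five families coincide. The bulk of the effort lies in the diagonal construction and in the bookkeeping of constants in the change-of-centre estimate; the remainder is the $p$-compact transcription, through $\m_p$ and $\kappa_p$, of the treatment of holomorphy types in \cite{DIN} and \cite{NACH}.
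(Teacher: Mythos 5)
Your overall architecture is sound and is, in substance, the paper's own proof with the cycle of dominations rearranged: the block $(a)=(b)=(c)$ is proved exactly as in the paper (the two-radii estimate from Lemma~\ref{AMR-Cauchy} for $(c)\Rightarrow(b)$, a finite cover of $K$ plus Remark~\ref{m_p de funcion} for $(b)\Rightarrow(c)$), your easy inclusions $\tau_{(b)}\subseteq\tau_{(d)}$ and $\tau_{(e)}\subseteq\tau_{(d)}$ are correct, and for $\tau_{(d)}\subseteq\tau_{(e)}$ you defer to the Nachbin--Dineen block argument, which is legitimate since the paper itself only cites the proof of Dineen's Proposition~4 for $(d)=(e)$ and writes the blocks out when dominating $(b)$-seminorms by $(e)$-seminorms. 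One caution on that step: the ``balancing'' is not done against the convergence of Proposition~\ref{p-compact en 0} alone; the two mechanisms that make the blocks close are the reduction $p(f)\le\sum_m p(P_mf(0))$ (whose tail estimate is where Proposition~\ref{p-compact en 0} enters) and the absorption of the constants by dilation, $C(\ep_k)\,\m_p\big(Q(K+\ep_k B_E)\big)=\m_p\big(Q(C(\ep_k)^{1/m}(K+\ep_k B_E))\big)\le \m_p\big(Q(2K+2\ep_k B_E)\big)$ for $m\ge \log_2 C(\ep_k)$, which is what dictates where the $k$-th block must start.

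The genuine flaw is the mechanism you propose for the closing inclusion $\tau_{(e)}\subseteq\tau_{(b)}$. No finite subcover and no change of expansion centre is needed there, and Theorem~\ref{tipoholo} is the wrong tool: its estimate $\kappa_p(P_l(P)(a))\le (2e)^m\kappa_p(P)\|a\|^{m-l}$ \emph{introduces} geometric factors of order $(2eR)^m$, $R=\sup_{x\in K}\|x\|$, i.e.\ precisely the spurious growth you say it prevents; re-centring between $0$ and points of $K$ cannot avoid losses of size $R$, and indeed Theorem~\ref{tipoholo} is used nowhere in the paper's proof of this theorem. Since an $(e)$-seminorm involves only the expansion at the origin, the correct (and short) argument is the one contained in the proof of Proposition~\ref{p-compact en 0}: given an absolutely convex open $V\supset 2K$, take $d=\dist(2K,\mathcal{C}V)$ and $m_0$ with $2a_m\le d$ for $m\ge m_0$; by $m$-homogeneity and Lemma~\ref{AMR-Cauchy} applied at $0$ to the absolutely convex open set $2K+2a_mB_E\subset V$,
$$
\m_p\big(P_mf(0)(K+a_mB_E)\big)=2^{-m}\m_p\big(P_mf(0)(2K+2a_mB_E)\big)\le 2^{-m}\m_p\big(f(V)\big),\qquad m\ge m_0,
$$
while the finitely many remaining indices are handled by shrinking with a factor $c\le 1$ so that $2cK+2ca_mB_E\subset V$, at the cost of $(2c)^{-m}$. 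Summing gives $p(f)\le C_V\,\m_p(f(V))$, so every $(e)$-seminorm is AC-$\m_p$-ported by $2K$ and your cycle closes. With this replacement your proof is correct and coincides, up to the rearrangement of the dominations, with the paper's.
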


\begin{proof}
First note that if $f$ is $p$-compact and $K$ is a compact
set, there exists a open set $V\supset K$ such that $f(V)$
is $p$-compact. Then, seminorms in (a) and (b) are well
defined on $\mathcal H_{K_p}(E;F)$. Also, in virtue of
Proposition~\ref{p-compact en 0}, seminorms in (d) and
(e) are well defined. Standard arguments show that
seminorms in (a) and (b) define the same topology. 

Now we show that seminorms in (b) and (c) coincide. Let
$p$ be a seminorm and let $K$ be an absolutely convex
compact set satisfying the conditions in (c).  Let $V\supset
K$ be any absolutely convex open set and take
$d=\dist(K,\mathcal C V)>0$.  By
Proposition~\ref{AMR-Cauchy}, since $K +dB_E\subset V$, we
get 
$$
\m_p(P_mf(x)(dB_E)) \leq \m_p(f(x+dB_E))\leq \m_p(f(V)), 
$$
for all  $f \in \mathcal H_{K_p}(E;F)$. Thus, 
$$
d^m \sup_{x \in K} \kappa_p(P_mf(x)) \leq \m_p(f(V)),
$$
for each $m$. Hence 

$$
p(f)\leq C(\textstyle{\frac d2}) \d\sum_{m=0}^{\infty}
(\textstyle{\frac d2})^m \sup_{x \in K}
\kappa_p(P_mf(x))\leq 2 C(\textstyle{\frac d2}) \m_p(f(V)),
$$
which shows that  $p$ is  AC-$\m_p$-ported by $K$. 

Conversely,  let $p$ be a seminorm, let $K$ be an absolutely
convex compact set satisfying the conditions in (b). Fix 
$\varepsilon >0$ and take $x_1,\ldots,x_n$ in $K$ such that
$K\subset V$ with $V=\bigcup_{j=1}^{n} B_\varepsilon(x_j)$.
As we did before, we may find  an  absolutely convex open
set $W$ so that $K\subset W \subset V$. Let $f \in \mathcal
H_{K_p}$, without loss of generality we may assume that 
$\varepsilon < r_p(f,x)$ for all $x \in K$. By
Remark~\ref{m_p de funcion}, we obtain 
$$
\m_p(f(B_\varepsilon(x_j)))\leq \sum_{m=0}^{\infty}
\varepsilon^m \kappa_p(P_mf(x_j))\leq \sum_{m=0}^{\infty}
\varepsilon^m \sup_{x \in K} \kappa_p(P_mf(x)).
$$
As $p$ is  AC-$\m_p$-ported by $K$, we have that $p(f)\leq
C_W\m_p(f(W))\leq C_W\m_p(f(V))$ and therefore 
$$
\begin{array}{r l}
p(f)&\leq C_W\d \sum_{j=1}^{n}
\m_p(f(B_\varepsilon(x_j)))\\ 
                                       &\leq C_W \d
\sum_{j=1}^{n} \sum_{m=0}^{\infty} \varepsilon^m \sup_{x \in
K} \kappa_p(P_mf(x))\\
                                       &= n C_W  \d
\sum_{m=0}^{\infty} \varepsilon^m \sup_{x \in K}
\kappa_p(P_mf(x)).
\end{array}
$$
Thus $p$ belongs to the family in (c). 
If $\varepsilon \geq r_p(f,x)$, then
$\sum_{m\ge 0} \varepsilon^m \sup_{x \in K}
\kappa_p(P_mf(x))=\infty$ and the inequality follows.

By the proof of \cite[Proposition 4]{DIN}, we have that
seminorms in (d) and (e) generate the same topology.
Finally, we show that seminorms in (d) and (b) are
equivalent. The proof of Proposition~\ref{p-compact en 0}
shows that seminorms in (d) are AC-$\m_p$-ported by
absolutely convex compact sets.

To conclude  the proof, consider a seminorm $p$ and an
absolutely convex compact set $K$ satisfying conditions in
(b). We borrow some ideas of \cite[Chapter 3]{DIN2}. For
each $m$, let $W_m$ be the absolutely convex open set
defined by $W_m = K+(\frac 12)^mB_E$. Since $p$ is
AC-$\m_p$-ported by $K$, for each $m \in \N$, there exists a
constant $C_m=C_{W_m}$ such that $p(f)\leq C_m
\m_p(f(W_m))$, every $p$-compact function $f$.

For $m=1$, there exists $n_1 \in \N$, such that for all $n>n_1$,
$C_1^{1/n}<2$. Take $V_1=2W_1$. Now, if  $n>n_1$ and $Q \in
\mathcal P_{K_p}(^nE;F)$, 
$$
p(Q)\leq C_1 \m_p(Q(W_1))=\m_p(Q(C_1^{1/n}W_1))\leq
\m_p(Q(V_1)).
$$
For $m=2$, there exists $n_2>n_1$ such that $C_2^{1/n}\leq
2$, for all $n>n_2$. Now, take $V_2=2W_2$ and, as before, 
we have for any $Q \in \mathcal P_{K_p}(^nE;F)$, with
$n>n_2$, 
$$
p(Q)\leq C_2\m_p(Q(W_2))=\m_p(Q(C_2^{1/n}W_2))\leq
\m_p(Q(V_2)).
$$ 
Repeating this procedure we obtain a sequence of absolutely
convex open sets $V_j$  satisfying
$$
\begin{array}{r l}
p(f)\leq \displaystyle
\sum_{m\ge 0} p(P_mf(0))&\displaystyle =\sum_{m< n_1}
p(P_mf(0))+\sum_{j\ge 1}\sum_{n_j \le m <n_{j+1}}
p(P_mf(0))\\
                            &\displaystyle \leq C_{V_1}
\sum_{m<n_1}\m_p(P_mf(0)(V_1))+\sum_{j\ge 1}\sum_{n_j \le m
<n_{j+1}} \m_p(P_mf(0)(V_j))\\
                            &\displaystyle \leq
C\left(\sum_{m <n_1}\m_p(P_mf(0)(V_1))+\sum_{j\ge 1}
\sum_{n_j \le m <n_{j+1}} \m_p(P_mf(0)(V_j))\right)
\end{array}
$$
where $C=\min\{1,C_{V_1}\}$ and the result follows since
$V_j =2K + (\frac 12)^{j-1}B_E$ and the seminorm $p$ is
bounded above by a seminorm of the family of the form (e). Now, the
proof is complete. 
\end{proof}

We finish this section by inspecting the
$\kappa_p$-approximation property introduced in
\cite{DPS_dens}. We 
will show that  $p$-compact homogeneous polynomials from $F$
to $E$ can be $\kappa_p$-approximated by polynomials in   $\mathcal
P(^mF)\otimes E$  whenever $E$ has the
$\kappa_p$-approximation property. We then obtain a similar
result for $p$-compact holomorphic functions. What follows
keeps the spirit of \cite[Theorem 4.1]{AS}. Recall that a
Banach space $E$ has the $\kappa_p$-approximation property
if for every Banach space $F$, $F'\otimes E$ is
$\kappa_p$-dense in $\mathcal K_p(F;E)$.

\begin{theorem}\label{Pol-kpAP}
Let $E$ be a Banach space.  The following statements are  equivalent.
\begin{enumerate}
\item[(i)] $E$ has the $\kappa_p$-approximation property.
\item[(ii)] For all $m \in \N$, $\mathcal P(^mF)\otimes E$
is $\kappa_p$-dense in $P_{K_p}(^mF,E)$, for every Banach
space $F$.
\item[(iii)] $\mathcal H(F)\otimes E$ is $\tau_{\omega,
\m_p}$-dense in $\mathcal H_{K_p}(F;E)$ for all Banach
spaces $F$.
\end{enumerate}
\end{theorem}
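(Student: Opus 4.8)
The plan is to prove (i)~$\Leftrightarrow$~(ii) by linearization and then (ii)~$\Leftrightarrow$~(iii) using the concrete families of seminorms provided by Theorem~\ref{top-equiv}. For the first equivalence I would invoke Lemma~\ref{pcomp-lin}: the linearization $P\mapsto L_P$ identifies $(\mathcal P_{K_p}(^mF;E),\kappa_p)$ with $(\mathcal K_p(\bigotimes^m_{\pi_s}F;E),\kappa_p)$ and carries $\mathcal P(^mF)\otimes E=(\bigotimes^m_{\pi_s}F)'\otimes E$ onto the finite rank operators from $\bigotimes^m_{\pi_s}F$ to $E$. Thus $\kappa_p$-density of $\mathcal P(^mF)\otimes E$ in $\mathcal P_{K_p}(^mF;E)$ is equivalent to $\kappa_p$-density of $(\bigotimes^m_{\pi_s}F)'\otimes E$ in $\mathcal K_p(\bigotimes^m_{\pi_s}F;E)$. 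If $E$ has the $\kappa_p$-approximation property this holds for the particular space $\bigotimes^m_{\pi_s}F$, giving (i)~$\Rightarrow$~(ii); conversely, taking $m=1$ reduces (ii) to the defining property of the $\kappa_p$-approximation property, giving (ii)~$\Rightarrow$~(i).

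For (iii)~$\Rightarrow$~(ii) the key point is that, for each fixed $m$, the seminorm $f\mapsto\kappa_p(P_mf(0))$ is $\tau_{\omega,\m_p}$-continuous. Indeed, with $K=\{0\}$ and $C(\varepsilon)=\varepsilon^{-m}$ one has $\kappa_p(P_mf(0))\le\varepsilon^{-m}\sum_{j}\varepsilon^j\kappa_p(P_jf(0))$, so this seminorm lies in the family of Theorem~\ref{top-equiv}(c). Given $P\in\mathcal P_{K_p}(^mF;E)$, I would regard $P$ as an element of $\mathcal H_{K_p}(F;E)$ (its Taylor expansion at $0$ has $P$ as its only nonzero component, so $r_p(P,0)=\infty$) and apply (iii) to obtain $g\in\mathcal H(F)\otimes E$ with $\kappa_p(P_m(P-g)(0))$ arbitrarily small. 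Since $P_m(P)(0)=P$ and $P_m(g)(0)\in\mathcal P(^mF)\otimes E$, this is exactly a $\kappa_p$-approximation of $P$ by elements of $\mathcal P(^mF)\otimes E$.

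For (ii)~$\Rightarrow$~(iii) I would work with the seminorms $q(f)=\sum_m\m_p(P_mf(0)(K+a_mB_F))$ of Theorem~\ref{top-equiv}(e), with $K$ absolutely convex compact and $(a_m)_m\in c_0^+$. Fix $f\in\mathcal H_{K_p}(F;E)$ and $\varepsilon>0$. Since $q(f)<\infty$, choose $m_0$ with $\sum_{m\ge m_0}\m_p(P_mf(0)(K+a_mB_F))<\varepsilon/2$. Setting $R=\sup_{x\in K}\|x\|+\sup_m a_m$, one has $K+a_mB_F\subset RB_F$ for all $m$, and homogeneity yields the crucial estimate $\m_p(Q(K+a_mB_F))\le R^m\kappa_p(Q)$ for every $Q\in\mathcal P_{K_p}(^mF;E)$. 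Using (ii), I approximate each of the finitely many components $P_mf(0)$ with $m<m_0$ by some $Q_m\in\mathcal P(^mF)\otimes E$ so that $R^m\kappa_p(P_mf(0)-Q_m)<\varepsilon/(2m_0)$; then $g=\sum_{m<m_0}Q_m\in\mathcal H(F)\otimes E$ satisfies $q(f-g)<\varepsilon$.

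The routine step is (i)~$\Leftrightarrow$~(ii); the substantive direction is (ii)~$\Rightarrow$~(iii), where one must convert $\kappa_p$-approximation of individual homogeneous polynomials into approximation for a seminorm that couples all Taylor components on the sets $K+a_mB_F$. This is made possible precisely by the two ingredients above: the convergence of the Theorem~\ref{top-equiv}(e) seminorms, which reduces the problem to finitely many components, together with the homogeneity estimate $\m_p(Q(K+a_mB_F))\le R^m\kappa_p(Q)$, which transfers $\kappa_p$-density into $\tau_{\omega,\m_p}$-density. Once Theorem~\ref{top-equiv} is in hand, no further delicate estimate is required.
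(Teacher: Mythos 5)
Your proof is correct and follows essentially the same route as the paper's: linearization via Lemma~\ref{pcomp-lin} for (i)$\Leftrightarrow$(ii), truncation of the Theorem~\ref{top-equiv} seminorms plus a rescaling constant to approximate only finitely many Taylor components for (ii)$\Rightarrow$(iii), and extraction of a Taylor component through a $\tau_{\omega,\m_p}$-continuous seminorm to close the cycle. The only (harmless) differences are organizational: the paper closes the cycle with (iii)$\Rightarrow$(i) using the single seminorm $f\mapsto\kappa_p(P_1f(0))$, whereas you prove (iii)$\Rightarrow$(ii) for every $m$ and (ii)$\Rightarrow$(i) by taking $m=1$, and your explicit verification via the family in Theorem~\ref{top-equiv}(c) that $f\mapsto\kappa_p(P_mf(0))$ is $\tau_{\omega,\m_p}$-continuous is a detail the paper leaves implicit.
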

\begin{proof}
First, suppose that $E$ has the $\kappa_p$-approximation property and fix
$m\in \mathbb N$. Then, $(\bigotimes^m_{\pi_s}F)'\otimes E$
is $\kappa_p$-dense in $K_p(\bigotimes^m_{\pi_s}F;E)$ which,
  by Proposition~\ref{pcomp-lin}, coincides with 
$(P_{K_p}(^mF,E),\kappa_p)$,  via the isomorphism given by
$P\mapsto L_P$.   Thus, (ii) is satisfied. 

Now, assume (ii) holds.  Take $f \in \mathcal H_{K_p}(F,E)$,
$\varepsilon >0$. By Theorem~\ref{top-equiv}, we may consider a
$\tau_{\omega,\m_p}$-continuous seminorm of the form
$q(f)=\sum_{m=0}^{\infty} \m_p(P_mf(0)(K+a_mB_F))$, where
$K\subset F$ is an absolutely convex compact set and
$(a_m)_m \in c_0^+$. Let $m_0 \in \N$ be such that
$\sum_{m>m_0}\m_p(P_mf(0)(K+a_mB_F))\leq  \frac \varepsilon 2$
and let $C>0$ be such that 
$\frac 1C (K+a_mB_F) \subset B_F$, for all $m\le m_0$. Given
 $\delta>0$, to be chosen later, by hypothesis,
we may find  $Q_m \in \mathcal P(^mF)\otimes E$ such that
$\kappa_p(P_mf(0) -Q_m)\leq \delta$, for all $m\le m_0$.
Define $g=\sum_{m=0}^{m_0} Q_m$, which belongs to $ \mathcal
H(F)\otimes E$, then

$$
\begin{array}{r l}
 q(f-g)&=\displaystyle \sum_{m=0}^{m_0}
\m_p((P_mf(0)-Q_m)(K+a_mB_F))+\sum_{m> m_0}
m_p(P_mf(0)(K+a_mB_F))\\
&\displaystyle \leq \sum_{m=0}^{m_0}C^{m}
\kappa_p((P_mf(0)-Q_m)) +\frac \varepsilon 2. 
\end{array}
$$
Thus,  $q(f-g) < \varepsilon$ for a suitable choice of
$\delta$, which proves  (iii). 

Finally, suppose we have (iii). 
Take $T \in K_p(F,E)$, $\varepsilon >0$ and the seminorm on
$\mathcal H_{K_p}(F;E)$ defined by $q(f)=\kappa_p(P_1f(0))$.
Since $q$ is $\tau_{\omega, \m_p}$-continuous, by
assumption, there exist $f_1,\ldots,f_n \in \mathcal H(F)$
and $x_1,\ldots, x_n \in E$, such that
$q(T-\sum_{j=1}^{n}f_j\otimes x_j)<\varepsilon$. In other
words, $\kappa_p(T-\sum_{j=1}^{n} P_1f_j(0)\otimes
x_j)<\varepsilon$ which proves that $F'\otimes E$ is
$\kappa_p$-dense in $K_p(F,E)$.  Whence, the proof is complete. 
\end{proof}

\subsection*{Acknowledgements} We wish to express our
gratitude to Richard Aron for introducing us to the problem
during his visit to Buenos Aires in 2007 and for many useful
conversations thereafter. Also, we are grateful to Manuel
Maestre for sharing with us  the open problems in a
preliminary version of \cite{AMR}. Finally, we thank Nacho
Zalduendo for the suggestions he made to consider holomorphy
types and to Christopher Boyd for his careful reading and comments on the manuscript.

\end{document}